\DeclareMathAlphabet{\mathsf}{OT1}{\sfdefault}{m}{n}
\SetMathAlphabet{\mathsf}{bold}{OT1}{\sfdefault}{b}{n}
\numberwithin{equation}{section}
\definecolor{WIMgreen}{RGB}{60 134 132}
\definecolor{UMblue}{RGB}{4 47 86}
\definecolor{myteal}{RGB}{0 123 137}
\definecolor{material_green}{RGB}{27 43 52}
\definecolor{dracula_pink}{RGB}{180 93 149}
\definecolor{dracula_blue}{RGB}{40 42 54}
\definecolor{dracula_turq}{RGB}{92 143 159}
\definecolor{dracula_orange}{RGB}{255 184 108}
\definecolor{material_petrol}{RGB}{2 119 189}
\definecolor{Purple}{RGB}{103 58 183}
\theoremstyle{plain}
\newtheorem{theorem}{Theorem}[section]
\newtheorem*{theorem*}{Theorem}
\newtheorem{proposition}[theorem]{Proposition}
\newtheorem{lemma}[theorem]{Lemma}
\newtheorem{corollary}[theorem]{Corollary}
\theoremstyle{definition}
\newtheorem{definition}[theorem]{Definition}
\theoremstyle{assumption}
\theoremstyle{remark}
\newtheorem{remark}[theorem]{Remark}
\def\supp{\operatorname{supp}}
\def\E{\mathbb{E}}
\def\G{\mathbb{G}}
\def\F{\mathbb{F}}
\def\N{\mathbb{N}}
\def\N{\mathbb{N}}
\def\R{\mathbb{R}}
\definecolor{darkred}{rgb}{0,0.6,0}
\def\cX{\mathcal{X}}
\def\cC{\mathcal{C}}
\newcommand{\cB}{\mathcal{B}}
\newcommand{\cD}{\mathcal{D}}
\newcommand{\cF}{\mathcal{F}}
\newcommand{\cG}{\mathcal{G}}
\newcommand{\cJ}{\mathcal{J}}
\newcommand{\cO}{\mathcal{O}}
\newcommand{\PP}{\mathbb{P}}
\renewcommand{\Re}{\operatorname{Re}}
\renewcommand{\hat}{\widehat}
\renewcommand{\tilde}{\widetilde}%
\newcommand{\overbar}[1]{\mkern 1.5mu\overline{\mkern-1.5mu#1\mkern-1.5mu}\mkern 1.5mu}
\newcommand*\diff{\mathop{}\!\mathrm{d}}
\newcommand{\cH}{\mathcal{H}}
\newcommand{\one}{\mathbf{1}}
\newcommand{\vertiii}[1]{{\left\vert\kern-0.25ex\left\vert\kern-0.25ex\left\vert #1
    \right\vert\kern-0.25ex\right\vert\kern-0.25ex\right\vert}}
\let\originalleft\left
\let\originalright\right
\renewcommand{\left}{\mathopen{}\mathclose\bgroup\originalleft}
\renewcommand{\right}{\aftergroup\egroup\originalright}
\newcommand{\bbGamma}{{\mathpalette\makebbGamma\relax}}
\newcommand{\makebbGamma}[2]{%
  \raisebox{\depth}{\scalebox{1}[-1]{$\mathsurround=0pt#1\mathds{L}$}}%
}
\newcommand{\specificthanks}[1]{\@fnsymbol{#1}}
\title{\fontsize{16}{19} \selectfont Stability of overshoots of Markov additive processes}
\author{Leif Döring\thanks{University of Mannheim, Institute of Mathematics, B6 26, 68159 Mannheim, Germany. \newline Email: \href{mailto:doering@uni-mannheim.de}{doering@uni-mannheim.de}/\href{mailto:ltrottne@mail.uni-mannheim.de}{trottner@math.au.dk}} \and Lukas Trottner\textsuperscript{\specificthanks{1},}\thanks{Supported by the Research Training Group ”Statistical Modeling of Complex Systems” funded by the German Science Foundation.}}
\date{\vspace{-20pt}}
\begin{document}
\maketitle
\normalsize
\begin{abstract}
We prove precise stability results for overshoots of Markov additive processes (MAPs) with finite modulating space. Our approach is based on the Markovian nature of overshoots of MAPs whose mixing and ergodic properties are investigated in terms of the characteristics of the MAP. On our way we extend fluctuation theory of MAPs, contributing among others to the understanding of the Wiener--Hopf factorization for MAPs by generalizing Vigon's équations amicales inversés known for L\'evy processes. Using the Lamperti transformation the results can be applied to self-similar Markov processes. Among many possible applications, we study the mixing behavior of stable  processes sampled at first hitting times as a concrete example.
\end{abstract}

\section{Introduction}
\subsection{Background and aims of the article}
Overshoots of a L\'evy process $\xi$, defined by
$$\cO_x = \xi_{T_x} - x,\quad x\geq 0,$$
on $\{T_x < \infty\}$, where $T_x := \inf\{t \geq 0: \xi_t > x\}$, are classical objects in the study of L\'evy processes. Their asymptotic analysis is essentially rooted in renewal theory for random walks and has gained a lot of interest in the past two decades starting with the observation in \cite{BertoinHarnSteutel1999} that classical limit theorems for the residual time chain of renewal processes have a natural analogue in weak convergence of overshoots of subordinators to a non-trivial limiting distribution. Besides applications and extensions in ruin theory for insurance risk processes driven by L\'evy processes (see \cite{kluppelberg2004,park2008,griffin2016}), this observation was used to  explain the entrance behavior of positive self-similar Markov processes (pssMps) at the origin. Using the Lamperti transformation for transient pssMps one can show that a pssMp can be started from the origin if and only if the overshoots of the underlying L\'evy process converge weakly as the overshoot level $x$ diverges to $+\infty$ (see \cite{BertoinSavov2011,chaumont2012}). This was generalized in \cite{dereich2017} to the question of how to start real self-similar Markov processes (rssMps)  from the origin. Methods for rssMps are similar to those for pssMps replacing the L\'evy processes $\xi$ in the Lamperti transformation by \textit{Markov additive processes} $(\xi,J)$, MAPs in the following, with finite modulating space $\{-1,1\}$. The corresponding transformation is usually called Lamperti--Kiu transform. MAPs $(\xi,J)$ are also called Markov modulated L\'evy processes, due to the \textit{ordinator} $\xi$ behaving as a L\'evy process in between jumps of a \textit{modulating chain} $J$, with the L\'evy triplet of $\xi$ being determined by the current state of $J$. The limiting behavior of overshoots of MAPs, defined by
$$(\cO_x,\cJ_x) = \big(\xi_{T_x} - x, J_{T_x}\big), \quad x \geq 0,$$
on $\{T_x < \infty\}$, where $T_x := \inf\{t \geq 0: \xi_t > x\}$, then plays the same role for the entrance law at $0$ of rssMps, as do overshoots of L\'evy processes for pssMps.

The aim of this article is to explore in detail mixing and ergodicity of overshoots of MAPs. We study the convergence in total variation norm, including conditions for polynomial and exponential rates of convergence. Based on fluctuation theory of MAPs developed in \cite{dereich2017} we will use the Meyn and Tweedie approach to stability of continuous time Markov processes (see for instance \cite{MeynTweedie1993,MeynTweedie1993b,MeynTweedie1993c,Tweedie1994}) to demonstrate that overshoot convergence can be much more finely analyzed once we take the perspective on overshoots as a Markov process, where the subsequent spatial levels that are passed by the ordinator $\xi$ serve as time index for the overshoot process $(\cO,\cJ) = (\cO_t,\cJ_t)_{t \geq 0}$. This idea is inspired by the observation that for the overshoot process of a L\'evy subordinator $\sigma$, inverse local time at $0$ is given by $\sigma$ itself \cite{bertoin2000}. For this special case, this opens the door to powerful results of excursion theory for general Markov processes and allows, among others, to derive explicit formulas for the invariant measure and resolvent of the overshoot process of a L\'evy subordinator in terms of its triplet \cite{getoor1979,blumenthal1992}. We generalize these findings to the MAP situation and consequently make use of the analytical tractability of overshoots to analyze their ergodic behavior. For the particular case of L\'evy processes, the results can be interpreted as a natural continuous time generalization of results on ergodicity and exponential convergence of the residual time chain belonging to a renewal process, which can be found in the standard references on stability of discrete time Markov chains, Meyn and Tweedie \cite{MeynTweedie2009} and Nummelin \cite{nummelin1984}. Extensions of renewal theory for random walks to discrete time MAPs (often called \textit{Markov random walks}) were treated in \cite{cinlar1969, jacod71, kesten1974, lalley1984, alsmeyer1994} among others.

Our fine analysis of overshoot stability of MAPs is not only inspired by a theoretical desire to understand their asymptotics, but also by a practical need to develop statistical and numerical procedures to get hold of the \textit{ascending ladder height process} $(H^+,J^+)$ of a given MAP $(\xi,J)$. This process is one of the cornerstones of fluctuation theory of MAPs and is theoretically accessible by means of the Wiener--Hopf factorization. However, its explicit analytical characteristics are in general unknown, with a notable exception being the factorization of the MAP associated to an $\alpha$-stable L\'evy process via the Lamperti--Kiu transform, which was found in \cite{kyprianou2016}. Due to its intimate connection with the running supremum of the MAP, observing $(\xi,J)$ at first hitting times offers all information needed to determine $(H^+,J^+)$ in numerical or statistical procedures. For a recent account of fluctuation theory of Markov random walks we refer to \cite{alsmeyer2018}.

The results of the present article have applications in optimal control problems based on MAPs, see e.g.\ the recent article \cite{christensen2020} for the more particular case of a Lévy driven impulse control problem. There, the generator of the ascending ladder height process is decisive for determining optimal threshold levels of a desired reflection strategy. Thus, under uncertainty concerning the underlying Lévy process, efficient statistical estimation of the ascending ladder height process is needed. Such data-driven reflection strategies are investigated in \cite{christensen21} based on the stability results from this article. Moreover, parametric estimation becomes feasible for the Lévy system of MAPs -- which encodes the jumps of a MAP in analogy to the Lévy measure of a Lévy process --  with explicit overshoot distributions based on the MAP observed at first hitting times $(T_{n\Delta})_{n \in \N_0}$ for some step size $\Delta > 0$. Such observation scheme can be described as \textit{stochastic low frequency scheme} as opposed to deterministic low and high frequency schemes usually encountered in parametric inference of stochastic processes (see \cite{levymatters4} for an overview in the context of Lévy processes) or the stochastic high-frequency scheme analyzed in \cite{rosenbaum2011} for Lévy processes.  Furthermore, nonparametric statistical estimation procedures for the ascending ladder height characteristics can be developed based on our observation that under some natural conditions, the overshoot process is exponentially $\beta$-\textit{mixing}. This property, describing rigorously asymptotic independence of the past and the future of a Markov process, is earmarked in \cite{dexheimer20} as a central building block to nonparametric statistical analysis of non-reversible ergodic Markov processes. Hence, our results indicate how to include MAPs (which are non-ergodic) in an ergodic statistical setting by considering the space-time transform introduced in form of overshoots. 

Due to recent applications of MAPs we also expect applications of our mixing estimates in other fields of probability theory such as planar maps (see for instance \cite{bertoin18}). We highlight this point by  making use of the the Lamperti--Kiu transform to translate the mixing behavior of MAPs into mixing bounds for self-similar Markov processes sampled at first hitting times. Further applications to non-parametric statistical estimation for MAPs, L\'evy processes and equivalently self-similar Markov processes will be subject to future research.

\subsection{Organization of the paper and main result}
We start in Section \ref{sec: map} with formally introducing Markov additive processes and summarizing some results belonging to their fluctuation theory as given in \cite{dereich2017}. We then proceed in Section \ref{sec: stability} with the stability analysis of MAP overshoots, starting with the rigorous description of their Markovian nature and then studying important concepts from the theory of stability for Markov processes such as Harris recurrence, invariant measures and petite sets. For the reader unfamiliar with these concepts, we have devoted Appendix \ref{sec: markov stability} to a brief summary of stability of Markov processes in the sense of Meyn and Tweedie, additionally clarifying some results in the literature and developing a new technique for deriving invariant measures of Markov processes based on a limiting argument involving the resolvent of the process in Proposition \ref{invariant measure resolvent}. Moreover, some general terms for Markov processes, such as Borel right processes, the Feller property and resolvents are summarized in Appendix \ref{sec: markov stability} without further explanations in the main body of the text. With this setup we come to our primary goal, the ergodicity analysis of overshoots.
Our main results in this respect, taking also account of the developments in Section \ref{sec: vigon} described below, can be informally summarized as follows.

\begin{theorem*}
Suppose that the MAP $(\xi,J)$ is upward regular, $J$ is irreducible and the ascending ladder height MAP $(H^+,J^+)$ has a finite first moment. Under mild assumptions on the L\'evy system of $(\xi,J)$, $(\cO_t,\cJ_t)_{t \geq 0}$ converges in total variation to a unique stationary distribution, which encodes the characteristics of the ascending ladder height MAP. If moreover the jump measures associated to the MAP's L\'evy system possess a common (exponential) moment, then the convergence takes place at (exponential) polynomial speed and overshoots are (exponentially) polynomially $\beta$-mixing.
\end{theorem*}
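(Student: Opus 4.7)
The plan is to apply the Meyn--Tweedie stability machinery (as summarized in Appendix \ref{sec: markov stability}) to the overshoot process $(\cO,\cJ)$ viewed as a time-homogeneous Markov process in the spatial variable $t$, whose Markovian structure was rigorously established in Section \ref{sec: stability}. The three qualitative hypotheses (upward regularity of $\xi$, irreducibility of $J$, and $\E[H^+_1] < \infty$) together with the mild Lévy-system assumption are used to verify, in order, (i) $\psi$-irreducibility of the semigroup of $(\cO,\cJ)$, (ii) the existence of a small/petite set covering typical overshoot configurations, and (iii) the existence and identification of a finite invariant measure $\pi$. For (i), upward regularity ensures that for every starting state the process moves past any threshold $x$ in finite time almost surely, while irreducibility of $J$ propagates accessibility across the modulating states; (ii) is obtained from the Feller-type properties of the overshoot semigroup together with a standard argument using the resolvent. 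For (iii) we exploit the central structural observation of Section \ref{sec: stability}: $(H^+,J^+)$ plays the role of (an analogue of) inverse local time at $0$ for $(\cO,\cJ)$, so excursion theory together with the Vigon-type \emph{équations amicales inversés} from Section \ref{sec: vigon} yield a closed-form candidate $\pi$ on $\R_+\times E$ in terms of the tails of the Lévy system restricted via $(H^+,J^+)$; finiteness of $\pi$ is precisely $\E[H^+_1]<\infty$.

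With a finite invariant measure and a petite set in hand, the Harris ergodic theorem of Meyn--Tweedie delivers convergence in total variation of $P_t((o,j),\cdot)$ to $\pi$ from $\pi$-a.e.\ starting point. To upgrade pointwise to all starting points and to identify $\pi$ as the unique stationary law encoding $(H^+,J^+)$, I would combine the resolvent-based identification from Proposition \ref{invariant measure resolvent} with the Feller/right-process property of $(\cO,\cJ)$ in order to transfer convergence to every $(o,j)$ via a transience argument on the complement of the petite set.

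For the quantitative statements, the strategy is to construct explicit Foster--Lyapunov drift functions tailored to moment assumptions on the Lévy system. The natural candidates are $V(o,j) = 1 + o^p$ in the polynomial case (assuming a common $p$-th moment) and $V(o,j) = \e^{\gamma o}$ in the exponential case (assuming a common exponential moment of rate $\gamma$). The key calculation is to show that, for a petite reference set $C$, the extended generator of $(\cO,\cJ)$ satisfies a polynomial, respectively geometric, drift inequality outside $C$; this reduces, after conditioning on the next level-crossing, to estimating the overshoot distribution of the increments of $(\xi,J)$ via the Lévy system. Here the Vigon-type identities are again essential since they express the transition kernel of $(\cO,\cJ)$ in terms of the Lévy system, so that common polynomial/exponential moments of the jump measures translate directly into drift towards $C$. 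Applying the corresponding Meyn--Tweedie theorems (subgeometric and geometric ergodicity respectively) then yields the claimed rate in total variation, uniformly in compacts of initial states.

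The $\beta$-mixing statement then follows from the standard observation that under stationarity, total variation convergence of $P_t$ to $\pi$ at a given rate implies the $\beta$-mixing coefficients $\beta(t)$ decay at the same rate; the exponential $\beta$-mixing case in particular hinges on the geometric ergodicity obtained above. The main obstacle I anticipate is the verification step in (iii)--(iv): showing that the candidate invariant measure derived via excursion theory is genuinely stationary for the overshoot semigroup (not merely for its resolvent) and finite under the sharp hypothesis $\E[H^+_1]<\infty$. This requires the generalized Vigon identities of Section \ref{sec: vigon}, which replace the Lévy-process Wiener--Hopf factorization by its MAP counterpart and couple the overshoot transitions of the bivariate process $(\xi,J)$ to the Lévy system in a way compatible with the modulating chain's irreducibility.
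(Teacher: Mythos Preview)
Your high-level strategy is right (Meyn--Tweedie, Lyapunov drift, Vigon to translate conditions), but two concrete pieces are missing or misplaced and would prevent the argument from going through as written.

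First, the central analytical tool in the paper is the \emph{explicit resolvent formula} for $(\cO,\cJ)$ (Theorem \ref{resolvent overshoot}), not excursion theory or the extended generator. You propose a generator drift inequality, but the generator of the overshoot process is never identified; instead the paper computes $\mathcal{U}_\lambda f(x,i)$ in closed form and uses the resolvent drift criterion \eqref{eq: exp ergodic2}/\eqref{eq:drift subgeo} directly. The same resolvent formula also yields (a) the strong Feller property of $\mathcal{U}_\lambda$, hence the $T$-process property and petiteness of compacts, and (b) the invariant measure via Proposition \ref{invariant measure resolvent}---not via Vigon. Vigon's équations enter only \emph{afterwards}, to transfer moment and absolute-continuity conditions from $\bm{\Pi}^+$ back to $\bm{\Pi}$; they do not express the transition kernel of $(\cO,\cJ)$ in terms of the parent Lévy system, as you suggest. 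Relatedly, you omit the aperiodicity/irreducible-skeleton step (Proposition \ref{prop: aperiod}), which is precisely where the ``mild assumptions on the Lévy system'' do real work: one needs either a positive drift $d_i^+>0$ (creeping) or a local absolute-continuity condition on some $\Pi_j^+$ or $F_{k,j}^+$ to get an irreducible $\Delta$-skeleton, without which total-variation convergence does not follow from positive Harris recurrence alone.

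Second, your claim that $\beta$-mixing at a given rate is an automatic consequence of total-variation convergence at that rate is not correct in this generality. The paper uses Masuda's criterion \eqref{eq: masuda criterion}, which requires the additional uniform bound $\sup_{t\ge0}\E^\eta[\mathcal{R}_\lambda V_\lambda(\cO_t,\cJ_t)]<\infty$. Verifying this is a genuine computation (Theorem \ref{theo: mixing}) that uses the overshoot density \eqref{eq: overshoot law} together with the renewal-type estimate of Lemma \ref{lemma: renewal map} on the potential measures $U^+_{i,j}$; this is where the exponential moment assumption on the \emph{initial} distribution $\eta$ becomes necessary. Without this step you get ergodicity at the claimed rate but not the mixing statement.
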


This will be made precise in a sequence of theorems in Section \ref{sec: stability}. In Theorem \ref{theo: ergodic} we establish conditions on either the creeping probabilities of the subordinators associated to the ascending ladder height MAP or its L\'evy system that guarantee total variation convergence of overshoots. Theorem \ref{theo: exp ergodicity} and Theorem \ref{theo: mixing} build on this result, giving exponential/polynomial ergodicity and the exponential/polynomial $\beta$-mixing property, respectively.

Section \ref{sec: vigon} is devoted to finding conditions on the L\'evy system of the parent MAP, which imply the required assumptions on $(H^+,J^+)$ for the ergodic results of the previous section, thus enhancing significantly our understanding of asymptotics of MAP overshoots. The tool we develop for this purpose is an extension of Vigon`s équations amicales inversés for Lévy processes given in \cite{vigon2002} to MAPs. These equations analytically relate the Lévy systems of $(\xi,J)$ and $(H^+,J^+)$, which makes inference of distributional properties of the ascending ladder height process based on the characteristics of the parent MAP possible.

Finally, in Section \ref{sec: self similar} we apply our $\beta$-mixing result for MAPs to real self-similar Markov processes sampled at symmetric first hitting times by exploiting the Lamperti--Kiu transform, which bridges these two classes of processes. As an even more specific application, we then consider the mixing behavior of $\alpha$-stable L\'evy processes and ergodicity of overshoots of the associated Lamperti-stable MAP. 

\subsection{Basic notation}
For a given space $\cX$ we will denote by $\cB(\cX)$ its Borel $\sigma$-algebra and by $\cB_+(\cX)$ and $\cB_b(\cX)$ the space of positive, resp.\ bounded real-valued functions on $\cX$. If $\cX$ is locally compact, then $\cC_0(\cX)$ denotes the space of continuous, real-valued functions on $\cX$ vanishing at infinity. If $\cX = \R$ and $\mu$ is a measure on $(\R, \cB(\R))$, we let $\overbar{\mu}(y) \coloneq \mu((y,\infty))$, $y \in \R$, be its tail. $\mathrm{Leb}(\diff{x})$ denotes the Lebesgue measure on $\R$ and $\mathrm{Leb}_+(\diff{x})$ is its restriction to $\R_+ = [0,\infty)$.

\section{Markov additive processes and their fluctuation theory} \label{sec: map}
We start with introducing Markov additive processes with finite modulating space. For the general theory of Markov additive processes the reader may consult the landmark papers of \c{C}inlar \cite{cinlar1972,cinlar1974}, a good start for the particular case of finite modulating space is \cite[Chapter XI]{asmussen2003}, and a focus on fluctuation theory is given in \cite{dereich2017}. 
Let $[n] = \{1,\ldots,n\}$ be a finite set and $(\R \times [n])_{\vartheta}$ be the Alexandrov one-point compactification of $\R \times [n]$ with some isolated state $\vartheta = (\infty, \varpi)$. Throughout we will always extend a function $f\in \cB(\R \times [n])$ to a function in $\cB((\R \times [n])_\vartheta)$ by setting $f(\vartheta) = 0$, which will make notation more convenient. A (killed) Markov additive process (MAP) $(\xi,J)$ with finite modulating space $[n]$ is defined as a Feller process with state space $\R \times [n]$ and cemetery state $\vartheta$, having a possibly finite lifetime $\zeta$ and underlying stochastic base $(\Omega,\cF,\F = (\cF_t)_{t\geq 0}, (\PP^{x,i})_{(x,i) \in (\R \times [n])_\vartheta})$ and which moreover has the characteristic property that given $s,t \geq 0,$ $(x,i) \in \R \times [n]$ and $f \in \cB_b((\R \times [n])_\vartheta)$ it holds that
$$\E^{x,i}\big[f(\xi_{t+s} - \xi_t, J_{t+s}) \one_{\{t < \zeta\}} \vert \cF_t\big] = \E^{0,J_t}[f(\xi_s,J_s)] \one_{\{t < \zeta\}}, \quad \PP^{x,i}\text{-a.s.}$$
In other words, conditionally on $\{J_t = i\}$ and no killing before time $t \geq 0$, the pair $(\xi_{t+s}- \xi_t, J_{t+s})_{s\geq 0}$ is independent of the past and has the same distribution as $(\xi_s, J_s)_{s\geq 0}$ under $\PP^{0,i}$, which is an equivalent definition for MAPs with finite modulating space often encountered in the literature such as \cite{dereich2017}. A straightforward consequence of this property is conditional spatial homogeneity of the process, i.e.\
$$\E^{x,i}[f(\xi, J)] = \E^{0,i}[f(\xi + x, J)]$$
holds for any bounded measurable $f$ on the Skorokhod space $\cD(\R \times [n])$ of càdlàg functions mapping from $\R_+ = [0,\infty)$ to $\R \times [n]$ equipped with its Borel $\sigma$-algebra (here and for the rest of the paper we implicitly assume that $(\xi,J)$ has exclusively càdlàg paths, which can be easily achieved by either constructing the process as the canonical coordinate process on the Skorokhod space or by a reduction of the probability space and the facts that, by definition, Feller  processes have càdlàg paths almost surely and $\cF$ is complete). 
Moreover, $(J_t)_{t \geq 0}$ is a continuous time Markov chain, whose transition function is independent of the initial distribution of $\xi$. Conditional independence of increments and spatial homogeneity of the ordinator $\xi$ already teases an intimate relation of MAPs and L\'evy processes. In fact, any MAP can be decomposed into an independent sequence of L\'evy processes, whose characteristic triplet depends on the current state of the modulating Markov chain $J$.

More precisely, we suppose that the measurable space $(\Omega, \cF)$ is rich enough to support a probability measure $\PP$ such that $\PP^{x,i} = \PP(\cdot \vert \xi_0 = x, J_0 = i)$, i.e.\ the probabilities underlying the Markov process $(\xi,J)$ are given as regular conditional probabilites of $\PP$. Then, Proposition 2 in \cite{dereich2017} (see also \cite[Proposition 2.5]{ivanovs2007} or \cite[Theorem 2.23]{cinlar1972}) gives the following characterization of a MAP, showing that in between jumps of $J$, $\xi$ behaves as a L\'evy process with characteristic triplet determined by the current state of $J$ and every jump of $J$ potentially triggers an additional jump of $\xi$.

\begin{proposition}\label{char map levy}
  A process $(\xi,J)$ is an unkilled MAP if and only if there exist sequences of
  \begin{itemize}
    \item (killed) L\'evy processes $(\xi^{n,i})_{n \in  \N_0},$ i.i.d.\ under $\PP$ for fixed $i \in [n]$,
    \item real random variables $(\Delta^n_{i,j})_{n\in \N}$, i.i.d.\ under $\PP$ for fixed and distinct $i,j \in [n]$,
  \end{itemize}
  independent of $J$ and of each other under $\PP$, such that if $\sigma_n$ is the $n$-th jump time of $J$, then under $\PP^{x,i}$, $\xi$ can be written almost surely as
  $$\xi_t = \left\{\begin{array}{ll} x + \xi_t^{0,i}, \quad &t \in [0, \sigma_1),\\ \xi_{\sigma_{n}-} + \Delta^n_{J_{\sigma_n-}, J_{\sigma_n}} + \xi_{t-\sigma_n}^{n,J_{\sigma_n}}, \quad &t \in [\sigma_n,\sigma_{n+1}) , t < \zeta,\\ \xi_t = \infty, \quad &t \geq \zeta, \end{array}\right.$$
  where the lifetime $\zeta$ is the first time one of the appearing L\'evy processes is killed:
  $$\zeta = \inf\big\{ t > 0: \exists n\in \N_0, \sigma_n \leq t \text { such that } \xi^{n, J_{\sigma_n}} \text{ is killed at time } t - \sigma_n\big\}.$$
\end{proposition}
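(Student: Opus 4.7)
The plan is to prove both implications in turn, with the sufficiency being routine and the necessity requiring a careful decomposition at the jump times of $J$.

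For the sufficiency direction, I would assume the construction from $(\xi^{n,i})$ and $(\Delta^n_{i,j})$ is given and verify the defining MAP property by hand. Fix $(x,i)$ and $t \geq 0$, and consider the conditional law of $(\xi_{t+s}-\xi_t, J_{t+s})$ given $\cF_t$ on $\{t < \zeta\} \cap \{J_t = j\}$. Letting $\sigma_n$ be the last jump of $J$ before time $t$, I decompose $\xi_{t+s} - \xi_t$ into the $\cF_t$-measurable remainder $\xi^{n,j}_{(t+s)-\sigma_n} - \xi^{n,j}_{t-\sigma_n}$ of the current L\'evy piece (valid until the next jump of $J$), plus the contributions $\Delta^{n+k}_{\cdot,\cdot}$ and $\xi^{n+k, J_{\sigma_{n+k}}}$ for $k\geq 1$ with $\sigma_{n+k} \leq t+s$. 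Stationary independent increments of each $\xi^{n,j}$, the Markov property of $J$ at time $t$ (which ensures $(J_{t+s})_{s \geq 0}$ has the same conditional law as $J$ started from $j$), and the stated independence between the pieces then force the conditional law to agree with the law of $(\xi_s, J_s)$ under $\PP^{0,j}$.

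For the necessity direction, I start from the MAP $(\xi,J)$ and extract the L\'evy pieces and jump variables inductively at the jump times of $J$. Under $\PP^{0,i}$, the state of $J$ is constantly $i$ on $[0,\sigma_1)$ and $\sigma_1$ is exponentially distributed with rate $q_i$. Applying the MAP property at deterministic times $t < \sigma_1$ shows that $(\xi_{t+s}-\xi_t)_{s\geq 0}$, restricted to $\{t+s < \sigma_1\}$, has stationary increments independent of $\cF_t$ with the same law as $(\xi_s)_{s\geq 0}$ under $\PP^{0,i}$; hence, after enlarging the probability space if necessary, $(\xi_t)_{t < \sigma_1}$ extends to a L\'evy process $\xi^{0,i}$ and the segment is recovered by killing at the independent exponential time $\sigma_1$. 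I then define the first jump variable $\Delta^1_{i,j} := \xi_{\sigma_1}-\xi_{\sigma_1-}$ on $\{J_{\sigma_1-}=i, J_{\sigma_1}=j\}$, whose conditional law given $(i,j)$ is intrinsic to the MAP. The strong Markov property of $(\xi,J)$ at $\sigma_1$ ensures that the shifted process $(\xi_{\sigma_1+t} - \xi_{\sigma_1}, J_{\sigma_1+t})_{t\geq 0}$ is again a MAP starting from $(0, J_{\sigma_1})$ and is independent of $\cF_{\sigma_1}$; iterating at each $\sigma_n$ yields the full families $(\xi^{n,i})$ and $(\Delta^n_{i,j})$.

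The main obstacle is to argue that the sequences extracted in this way truly have the stated distributional structure: namely, that for each fixed $i$ the L\'evy pieces $(\xi^{n,i})_{n \in \N_0}$ are i.i.d., that for each fixed distinct $(i,j)$ the jumps $(\Delta^n_{i,j})_{n \in \N}$ are i.i.d., and that the two families are jointly independent of $J$ and of each other. The reason this is delicate is that successive pieces are indexed by the random sequence $(J_{\sigma_n})_{n}$, so the independence claim needs to be formulated conditionally on the trajectory of $J$. I would address this by conditioning on the entire skeleton chain $(J_{\sigma_n})_{n\in \N_0}$ and on the holding times $(\sigma_{n+1}-\sigma_n)$, and iteratively invoking the strong Markov property at $\sigma_n$: given $J_{\sigma_n}=i$, the next L\'evy piece is drawn from the law of $\xi^{0,i}$ independently of the past, and given $(J_{\sigma_n -}, J_{\sigma_n})=(i,j)$, the next jump is drawn from the law of $\Delta^1_{i,j}$ independently of the past. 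A standard conditional-independence / extension argument (as in \c{C}inlar \cite{cinlar1972}) then packages these fragments into genuine i.i.d.\ families on a suitably enlarged probability space, completing the construction.
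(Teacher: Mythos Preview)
The paper does not actually prove this proposition: it is quoted as Proposition 2 of \cite{dereich2017}, with pointers to \cite[Proposition 2.5]{ivanovs2007} and \cite[Theorem 2.23]{cinlar1972}. So there is no in-paper proof to compare against; your sketch is essentially the classical argument underlying those references, and as such it is on the right track.

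A couple of small comments. In the sufficiency direction, your phrase ``the $\cF_t$-measurable remainder $\xi^{n,j}_{(t+s)-\sigma_n}-\xi^{n,j}_{t-\sigma_n}$'' is a slip: this increment is certainly not $\cF_t$-measurable for $s>0$. What you want to say is that, conditionally on $\cF_t$ and on $\{J_t=j\}$, the post-$t$ portion of the current L\'evy piece is an independent copy of the increment process of $\xi^{(j)}$ by stationary independent increments, and that the subsequent pieces and transitional jumps are fresh i.i.d.\ draws independent of $\cF_t$ because they are indexed by $n\geq$ the current block and were assumed independent of everything before. You clearly have this in mind, but the wording should be fixed. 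In the necessity direction, your identification of $(\xi_t)_{t<\sigma_1}$ with a L\'evy process killed at an independent exponential time is the genuinely subtle step: one must first show that the holding time $\sigma_1$ is independent of the path $(\xi_t)_{t<\sigma_1}$ under $\PP^{0,i}$, which follows from the MAP property applied at deterministic times together with the memoryless law of $\sigma_1$. Once that is in place, your inductive use of the strong Markov property at the $\sigma_n$'s and the conditional-independence packaging (exactly as in \c{C}inlar \cite{cinlar1972}) completes the argument.
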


In this paper, we will only deal with MAPs $(\xi,J)$ with \textit{infinite lifetime}, i.e.\ $\zeta = \infty$, $\PP^{x,i}$-a.s.\ for all $(x,i) \in \R \times [n]$. However, killing is relevant for fluctuation theory of MAPs as described below. For technical reasons we also exclude cases when $\xi$ has lattice support. Let us define $(\xi^{(i)})_{i \in [n]}$ as L\'evy processes with characteristic triplets $(a_i,b_i, \Pi_i)$ that have the same law as $(\xi^{0,i})_{i \in [n]}$ and $(\Delta_{i,j})_{i,j \in [n]}$ as random variables sharing the same law as the corresponding $(\Delta_{i,j}^{1})_{i,j \in [n]}$, with $\Delta_{i,i} \coloneq 0$ for all $i \in [n]$. Moreover, let $F_{i,j}$ be the law of $\Delta_{i,j}$. Then, $(\xi,J)$ can be uniquely characterized by the L\'evy--Khintchine exponents $\Psi_i(\theta) = \log \E[\exp(\mathrm{i}\theta \xi^{(i)}_1)], i \in [n]$, the transition rate matrix $\bm{Q} = (q_{i,j})_{i,j \in [n]}$ of $J$ and the Fourier transforms of $\Delta_{i,j}$ denoted by $G_{i,j}(\theta) = \E[\exp(\mathrm{i} \Delta_{i,j})], i,j \in [n]$. For convenience we assume $\Delta_{i,j} = 0$ whenever $q_{i,j} = 0$, which is without loss of generality because Proposition \ref{char map levy} shows that these transitional jumps never occur. If we now define the characteristic matrix exponent
$$\bm{\Psi}(\theta) \coloneq \mathrm{diag}\left(\Psi_1(\theta),\ldots, \Psi_n(\theta)\right) + \bm{Q} \odot \bm{G}(\theta),$$
as an analogue to the L\'evy--Khintchine exponent of a L\'evy process, then
$$\E^{0,i}\Big[\mathrm{e}^{\mathrm{i}\theta \xi_t}; \, J_t = j \Big] = \big(\mathrm{e}^{t \bm{\Psi}(\theta)}\big)_{i,j}, \quad i,j \in [n], \theta \in \R.$$
Here, $\bm{G}(\theta) = (G_{i,j}(\theta))_{i,j \in [n]}$ and $\odot$ denotes the Hadamard product, i.e.\ pointwise multiplication of matrices of the same dimension. Note that since $\Delta_{i,i} = 0$ we have $G_{i,i}(\theta) = 1$ for all $i \in [n]$ and hence $(\bm{Q} \odot \bm{G}(\theta))_{i,i} = -q_{i,i}$. Let us also define the family of potential measures $(U_{i,j})_{i,j \in [n]}$ given by
$$U_{i,j}(\diff{x}) = \E^{0,i}\Big[\int_0^\infty \one_{\{\xi_t\in \diff{x}, J_t = j\}}\diff{t}\Big] = \int_0^\infty \PP^{0,i}(\xi_t \in \diff{x}, J_t = j)\diff{t}, \quad x\in \R, i,j\in [n],$$
i.e., $U_{i,j}(A)$ measures the time $\xi$ spends in $A$ when started in $i$, while the modulator $J$ is in state $j$.
Another important concept in the theory of (general state space) Markov additive processes is the existence of a \textit{L\'evy system}, see \c{C}inlar \cite{cinlar1974}, which generalizes the notion of a L\'evy measure and becomes explicit for MAPs with finite modulating space thanks to the path decomposition given in Proposition \ref{char map levy}. We say that $(\bm{\Pi}, A)$, where $\bm{\Pi}$ is a kernel on $([n],\mathcal{B}(\R\times[n]))$ satisfying
$$\bm{\Pi}(i,\{(0,i)\}) = 0, \quad \int_{\R} \big(1 \wedge \lvert y \rvert^2\big)\, \bm{\Pi}(i, \diff{y} \times \{i\}) < \infty, \quad i \in [n],$$
and $A$ is an increasing continuous additive functional of $(\xi,J)$ such that for any $f \in \mathcal{B}_+([n] \times \R \times[n]))$ and $(x,i) \in \R \times [n]$,
\begin{equation}\label{eq: levy system}
\E^{0,i}\Big[\sum_{s \leq t} f(J_{s-},\Delta \xi_s, J_s) \one_{\{ \Delta \xi_s \neq 0 \text{ or } J_{s-}\neq J_s\}} \Big] = \E^{0,i}\Big[\int_0^t A_s \int_{\R \times [n]} \bm{\Pi}(J_s,\diff{x}, \diff{y})\, f(J_s,x,y)\Big],
\end{equation}
is a L\'evy system for $(\xi,J)$. Using Proposition \ref{char map levy} and results on expectations of functionals of Poisson random measures, see e.g.\ Theorem 2.7 in \cite{kyprianou2014}, one can demonstrate that $A_t = t \wedge \zeta$ and
$$\bm{\Pi}(i,\diff{y} \times \{j\}) = \one_{\{i=j\}} \Pi_i + \one_{\{i\neq j\}} q_{i,j} F_{i,j}, \quad i,j \in [n],$$
and thus for any $i \in [n]$,
\begin{equation}\label{eq: compensation map}
\begin{split}
\E^{0,i}\Big[\sum_{s \leq t} f(J_{s-},\Delta \xi_s,J_s) \one_{\{ \Delta \xi_s \neq 0 \text{ or } J_{s-}\neq J_s\}} \Big] &= \sum_{k=1}^n\Big(\E^{0,i}\Big[\int_0^t \int_{\R\setminus\{0\}} f(k,x,k) \one_{\{J_{s} = k\}}\, \Pi_k(\diff{x}) \diff{s}\Big]\\
&\qquad + \sum_{j \neq k} q_{k,j} \E^{0,i}\Big[\int_0^t \int_{\R} f(k,x,j) \one_{\{J_{s} =k\}}\, F_{k,j}(\diff{x}) \diff{s}\Big]\Big)\\
&= \sum_{k=1}^n \int_0^t \PP^{0,i}(J_s = k) \diff{s} \,\Big( \int_{\R\setminus\{0\}} f(k,x,k)\, \Pi_k(\diff{x})\\
&\qquad +\sum_{j \neq k}q_{k,j} \int_{\R} f(k,x,j)\, F_{k,j}(\diff{x})\Big).
\end{split}
\end{equation}
Since $A$ is simply the uniform motion, we will also refer to just $\bm{\Pi}$ as the L\'evy system for the remainder of this article. As remarked in \cite{kyprianou2020}, this can be generalized to the following identity for any predictable process $(Z_t)_{t \geq 0}$ and $g\in \cB_+([n] \times \R \times \R \times [n])$:
\begin{equation}\label{eq: compensation map2}
\begin{split}
&\E^{0,i}\Big[\sum_{s \leq t} Z_s g(J_{s-},\xi_{s-},\xi_s,J_s) \one_{\{ \Delta \xi_s \neq 0 \text{ or } J_{s-}\neq J_s\}} \Big] \\
&\quad = \sum_{k=1}^n\Big(\E^{0,i}\Big[\int_0^t \diff{s}\, Z_s \one_{\{J_s = k\}} \int_{\R\setminus\{0\}}  \Pi_k(\diff{x})\, g(k,\xi_s, \xi_s + x,k)\Big]\\
&\qquad \quad + \sum_{j \neq k} q_{k,j} \E^{0,i}\Big[\int_0^t \diff{s}\, Z_s \one_{\{J_s = k\}} \int_{\R}   F_{k,j}(\diff{x})\, g(k,\xi_s, \xi_s + x,j)\Big]\Big).
\end{split}
\end{equation}

Let us now dive into fluctuation theory of MAPs, which in the form suited to our needs was developed in \cite{dereich2017}. An essential tool for our upcoming analysis of the overshoots is the \textit{ascending ladder height MAP} $(H_t^+,J_t^+)_{t \geq 0}$, which is defined as follows (see the appendix of \cite{dereich2017} for more details). Let $(\mathsf{L}^{(i)}_t)_{t \geq 0}$ be a version of local time at the point $(0,i)$ for the strong Markov process $(\overbar{\xi}_t -\xi_t, J_t) _{t \geq 0}$, where $\overbar{\xi}_t \coloneq \sup_{s \leq t} \xi_s$. Define then $\mathsf{L}_t \coloneq \sum_{i=1}^n \mathsf{L}^{(i)}_t$, which is a continuous additive functional of $(\overbar{\xi}_t -\xi_t, J_t) _{t \geq 0}$, increasing almost surely on the set of times when $\xi$ attains a new maximum.

With this at hand we define the ladder height process $(H^+,J^+)$ by the time change
$$\big(H^+_t,J^+_t\big) = \left\{ \begin{array}{ll} \big(\xi_{\mathsf{L}^{-1}_t}, J_{\mathsf{L}^{-1}_t}\big), \quad & 0\leq t < \mathsf{L}_\infty,\\ \vartheta = (\infty,\varpi),\quad &t \geq \mathsf{L}_\infty, \end{array}\right.$$
where $\mathsf{L}^{-1}_t  \coloneq \inf\{s \geq 0: \mathsf{L}_s > t\}$ is the right-continuous inverse of $\mathsf{L}$. It can be shown that $(H^+,J^+)$ is a Markov additive subordinator with lifetime $\mathsf{L}_\infty$, i.e.\ a Markov additive process such that the ordinator $H^+$ has increasing paths almost surely before killing. Moreover, $(\mathsf{L}^{-1}_t)_{0 \leq t < \infty}$ almost surely equals the ordered set of times, when $\xi$ reaches a maximum and hence the closure of the range of $H^+$ up to its lifetime is identical to that of the supremum process $\overbar{\xi}$ almost surely. Denote by $H^{+,(i)}$ the L\'evy subordinators appearing in the decomposition of $(H^+,J^+)$ in the spirit of Proposition \ref{char map levy}. The respective drifts and L\'evy measures are denoted by $d_i^+$ and $\Pi^+_i$, the intensity matrix of $J^+$ by $\mathbf{Q}^+ = (q^+_{i,j})_{i,j \in [n]}$ and the killing rates of $H^{+,(i)}$ by $\dagger^+_i$, i.e., when $\dagger^+_i > 0$, the lifetime $\zeta^+_i$ of $H^{+,(i)}$ is exponentially distributed with mean $1\slash \dagger^+_i$ and otherwise, for $\dagger^+_i = 0$, $\zeta^+_i = \infty$ almost surely. Note that the MAP subordinator $(H^+,J^+)$ is then uniquely characterized by its Laplace exponent, given as follows:
\begin{equation} \label{char exp map}
\bm{\Phi}^+(\theta) \coloneq \mathrm{diag}\left(\Phi^{+}_1(\theta),\ldots, \Phi^{+}_n(\theta)\right) - \bm{Q}^+ \odot \bm{G}^+(\theta), \quad \theta \geq 0,
\end{equation}
where $\Phi^+_i$ is the Laplace exponent of $H^{+,(i)}$ and $\bm{G}^+(\theta) = (G_{i,j}^+(\theta))_{i,j \in [n]} = (\E[\exp(-\theta \Delta_{i,j}^+)])_{i,j \in [n]}$. It then holds that
$$\E^{0,i}\big[\exp(-\theta H^+_t);\, J^+_t = j\big] = \big(\mathrm{e}^{- \bm{\Phi}^+(\theta)t}\big)_{i,j}, \quad t \geq 0, \theta \geq 0, i,j \in [n].$$
Let us also denote the family of potential measures of $(H^+,J^+)$ by $(U^+_{i,j})_{i,j \in [n]}.$

In analogy to the case for L\'evy processes we also need the ascending ladder height process of the \textit{dual} of the MAP $(\xi,J)$, i.e.\ a MAP which has the same law as the time reversed MAP $(\xi,J)$. As remarked in \cite{dereich2017} the construction of the dual MAP is slightly more elaborate compared to the L\'evy case, where the dual process is simply the negative of the original L\'evy process, because we have to take care of time reversion of the ordinator $J$. Suppose that $J$ is irreducible -- and hence ergodic thanks to its finite state space -- and denote its invariant distribution by $\bm{\pi} = (\pi(i))_{i \in [n]}$. Moreover, let
$$\hat{q}_{i,j} = \frac{\pi(j)}{\pi(i)} q_{j,i}, \quad i,j \in [n],$$
which are the intensities of the time reversed modulating Markov chain $J$ and let $\bm{\hat{Q}} = (\hat{q}_{i,j})_{i,j\in [n]}$. Now let $(\hat{\PP}^{x,i})_{(x,i)\in \R \times [n]}$ be a family of probability measures such that $(\xi,J)$ has characteristic matrix exponent given by
$$\bm{\hat{\Psi}}(\theta) = \big(\hat{\E}^{0,i}\big[\exp(\mathrm{i}\theta\xi_1);\, J_1 = j\big]\big)_{i,j\in [n]} = \mathrm{diag}(\psi_1(-\theta),\ldots, \psi_n(-\theta)) + \bm{\hat{Q}}\odot \bm{G}(-\theta)^\top, \quad \theta \in \R.$$
Then indeed, under $\PP^{0,\bm{\pi}} \coloneq \sum_{i=1}^n \pi(i)\PP^{0,i}$, it holds that the time reversed process $(\xi_{(t-s)-}-\xi_t, J_{(t-s)-})_{0\leq s\leq t}$ is equal in law to $(\xi_s,J_s)_{s\leq t}$ under $\hat{\PP}^{0,\bm{\pi}}$, see Lemma 21 in \cite{dereich2017}. Let $\bm{\Delta_\pi} \coloneq \mathrm{diag}(\bm{\pi})$ and denote the matrix Laplace exponent of the ascending ladder height process of the dual process of $(\xi,J)$ by $\bm{\hat{\Phi}}{}^+$ and also the objects belonging to its L\'evy system in the obvious way.\footnote{A word of caution at this point: $\bm{\hat{\Phi}}{}^+$ is \textit{not} the matrix exponent of the dual of the ascending ladder height MAP $(H^+,J^+)$. To not confuse the reader we will therefore withhold the temptation to denote the ascending ladder height process of the dual of $(\xi,J)$ by $(\hat{H}{}^+,\hat{J}{}^+)$.} The key result for fluctuation theory of MAPs is the (spatial) Wiener--Hopf factorization given in Theorem 26 of \cite{dereich2017}, which states that up to pre-multiplication by a positive diagonal matrix corresponding to the scaling of local time at the supremum,
\begin{equation} \label{eq: wiener-hopf}
-\bm{\Psi}(\theta) = \bm{\Delta_\pi}^{-1} \bm{\hat{\Phi}}{}^+(\mathrm{i\theta})^\top\bm{\Delta_\pi}\bm{\Phi}^+(-\mathrm{i}\theta) = \bm{\Delta_\pi}^{-1} \bm{\hat{\Psi}}{}^+(-\theta)^\top\bm{\Delta_\pi}\bm{\Psi}^+(\theta), \quad \theta \in \R,
\end{equation}
and thus gives a factorization of the characteristic matrix exponent $\bm{\Psi}$ of $(\xi,J)$ in terms of the characteristic exponents $\bm{\Psi}^+$ and $\bm{\hat{\Psi}}{}^+$ of the ascending ladder height processes of $(\xi,J)$ and its dual, respectively. This identity is the key for understanding the interplay between the parent MAP $\xi$ and the ladder height processes, which we will further explore in Section \ref{sec: vigon}.

\section{Stability analysis of overshoots of MAPs} \label{sec: stability}
In this section, we assume that the lifetime $\zeta$ of $(\xi,J)$ is equal to $\infty$ on all of $\Omega$. For $t \geq 0$ define the ordinator`s $\xi$ first hitting time $T_t$ of the set $(t,\infty)$ by
$$T_t \coloneq \inf\{s \geq 0: \xi_s > t\}.$$
Note that by right-continuous paths of the process and right-continuity of the filtration $(\cF_t)_{t \geq 0}$ underlying $(\xi,J)$ this is a stopping time for the MAP. Set also
$$\overbar{\xi}_\infty \coloneq \sup_{0 \leq t < \infty} \xi_t.$$
We now define the process $(\cO_t,\cJ_t)_{t \geq 0}$ by
$$(\cO_t,\cJ_t) = \left\{ \begin{array}{ll} \big(\xi_{T_t} - t, J_{T_t}), \quad &\text{if } t < \overbar{\xi}_\infty,\\ \vartheta, \quad &\text{if } t \geq \overbar{\xi}_\infty,\end{array}\right. \quad t\geq 0,$$
i.e.\ if the level $t$ is smaller than the supremum of the process over its entire lifetime, then $\cO_t$ corresponds to the overshoot of $\xi$ over $t$ and $\cJ_t$ is equal to the state of the modulator at first passage of $t$, whereas for $t \geq \overbar{\xi}_\infty$ the process is sent to the cemetery state $\vartheta$.
An essential observation for our analysis is that $(\cO_t,\cJ_t)_{t \geq 0}$ is indistinguishable with respect to the family of probability measures $(\PP^{x,i})_{(x,i)\in (\R_+\times [n])_\vartheta}$ from the process $(\cO^+_t,\cJ^+_t)_{t \geq 0}$ corresponding to the ascending ladder MAP $(H^+,J^+)$, and hence is given by
$$(\cO^+_t, \cJ^+_t) = \left\{ \begin{array}{ll} \big(H^+_{T^+_t} - t, J^+_{T^+_t}), \quad &\text{if } t < \overbar{H}{}^+_\infty,\\ \vartheta, \quad &\text{if } t \geq \overbar{H}{}^+_\infty,\end{array}\right.\quad t \geq 0,$$
where $(T^+_t)_{t \geq 0}$ is the first passage process of $H^+$, which by increasing paths of $H^+$ is equal to its right-continuous inverse. Indistinguishability of the processes follows immediately from the fact that on $[0, \mathsf{L}_\infty)$, the range of the increasing process $(\mathsf L^{-1}_t)_{t \geq 0}$ almost surely equals the set of times when $\xi$ reaches a maximum. Using this relationship, \eqref{eq: compensation map2} and arguing as in the classical proof for the law of the undershoot/overshoot distribution for L\'evy processes (see \cite[Theorem 5.6]{kyprianou2014}), we obtain the following formula for the marginal distribution of the overshoot process
\begin{equation} \label{eq: overshoot law}
\begin{split}
\PP^{x,i}(\cO_t \in \diff{y}  , \cJ_t = j) &= \PP^{0,i}(\cO^+_{t-x} \in \diff{y} , \cJ^+_t = j)\\
&= \int_{[0,t-x)} \, \Pi^+_j(u+\diff{y}) \, U^+_{i,j}(t - x - \diff{u})\\
&\hspace{6pt} + \sum_{k \neq j} q^+_{k,j} \int_{[0,t-x)}  \, F^+_{k,j}(u+\diff{y}) \, U^+_{i,k}(t - x - \diff{u}), \quad i,j \in [n], x \in [0, t), y > 0,
\end{split}
\end{equation}
and
\begin{equation} \label{eq: sawtooth}
\E^{x,i}[f(\cO_t,\cJ_t)] = f(x-t,i), \quad x \in [t,\infty), i \in [n],
\end{equation}
provided that $\PP^{0,i}(T^+_0 = 0) = 1$. Assumption \ref{ass: regularity} introduced below will ensure this property. Equation \eqref{eq: sawtooth} describes the characteristic behavior of the overshoot process away from $0$ in the sense that if $\cO_t(\omega) = y > 0$ we have $\cO_s(\omega) = y - (s-t)$ for $s \in [t, t+y]$, i.e.\ the origin is approached at unit speed. This characteristic path structure of the overshoot process is visualized in Figure \ref{fig: overshoot plot} for the case of a compound Poisson subordinator $\sigma$ with positive drift, and is the reason why for such L\'evy subordinators the overshoot process is also known as sawtooth process, cf.\ Chapter II.3 in \cite{blumenthal1992}. We will therefore also refer to it as the sawtooth structure for MAP overshoots.

\begin{figure}[ht]
	\centering
	\fontsize{10}{12}
	\begin{tikzpicture}
	\node[circle, scale = 0.3, color= myteal, draw] (x1) at (1,0.5) {} ;
	\node[circle, scale = 0.3, color= myteal, fill = myteal, draw] (x2) at (1,1.25) {} ;
	\node[circle, scale = 0.3, color= myteal, draw] (x3) at (2.6,2.05) {} ;
	\node[circle, scale = 0.3, color= myteal, fill = myteal, draw] (x4) at (2.6,3.2) {} ;
	\node[circle, scale = 0.3, color= myteal, draw] (x5) at (3.2,3.5) {} ;
	\node[circle, scale = 0.3, color= myteal, fill = myteal, draw] (x6) at (3.2,4) {} ;
	\node[circle, scale = 0.3, color= myteal, draw] (x7) at (5.2,5) {} ;
	\node[circle, scale = 0.3, color= myteal, fill = myteal, draw] (x8) at (5.2,7) {} ;
	\node[circle, scale = 0.3,color = Purple, draw] (y1) at (0,0.5) {} ;
	\node[circle, scale = 0.3, fill = Purple, color = Purple,  draw] (y2) at (0.75,0.5) {} ;
	\coordinate (y3) at (0,1.25) {};
	\node[circle, scale = 0.3, color = Purple, draw] (y4) at (0,2.05) {} ;
	\node[circle, scale = 0.3, fill = Purple, color= Purple, draw] (y5) at (1.05,2.05) {} ;
	\coordinate (y6) at (0,3.2) {} ;
	\node[circle, scale = 0.3, color = Purple, draw] (y7) at (0,3.5) {} ;
	\node[circle, scale = 0.3, fill = Purple, color = Purple, draw] (y8) at (0.5,3.5) {} ;
	\coordinate (y9) at (0,4) {} ;
	\node[circle, scale = 0.3, color = Purple, draw] (y10) at (0,5) {} ;
	\node[circle, scale = 0.3, fill = Purple, color = Purple, draw] (y11) at (2,5) {} ;
	\coordinate (y12) at (0,7) {} ;
	\draw[->] (0,0) -- (7.2,0) node at (7.0,-0.3) {\textcolor{myteal}{$s$}, \textcolor{Purple}{$\mathcal{O}^\sigma_t$}};
	\draw[->] (0,0) -- (0,8) node at (-0.5,7.8) {\textcolor{myteal}{$\sigma_s$}, \textcolor{Purple}{$t$}};
	\draw[myteal, thick] (0,0) -- (x1);
	\draw[myteal, dotted] (x1) -- (1,0) node at (1.0,-0.3) {$T^\sigma_{t_1}$};
	\draw[dashed, color = Purple] (x1) -- (x2) node at (1.9,0.875) {\textcolor{Purple}{\fontsize{10}{12} $\mathcal{O}^\sigma_{t_1}$} $=$ \textcolor{myteal}{\fontsize{10}{12} $\Delta \sigma_{T^\sigma_{t_1}}$}};
	\draw[myteal, thick] (x2) -- (x3);
	\draw[myteal, dotted] (x3) -- (2.6,0) node at (2.6,-0.3) {$T^\sigma_{t_2}$};
	\draw[dashed, color = Purple] (x3) -- (x4) node at (3.5,2.625) {\textcolor{Purple}{\fontsize{10}{12} $\mathcal{O}^\sigma_{t_2}$} $=$ \textcolor{myteal}{\fontsize{10}{12} $\Delta \sigma_{T^\sigma_{t_2}}$}};
	\draw[myteal, thick] (x4) -- (x5);
	\draw[myteal, dotted] (x5) -- (3.2,0) node at (3.2,-0.3) {$T^\sigma_{t_3}$};
	\draw[dashed, color = Purple] (x5) -- (x6) node at (4.1,3.75) {\textcolor{Purple}{\fontsize{10}{12} $\mathcal{O}^\sigma_{t_3}$} $=$ \textcolor{myteal}{\fontsize{10}{12} $\Delta \sigma_{T^\sigma_{t_3}}$}};
	\draw[myteal, thick] (x6) -- (x7);
	\draw node at (4,5) {\textcolor{myteal}{$(\sigma_s)_{s \geq 0}$}};
	\draw[myteal, dotted] (x7) -- (5.2,0) node at (5.2,-0.3) {$T^\sigma_{t_4}$};
	\draw[dashed, color = Purple] (x7) -- (x8) node at (6.1,6) {\textcolor{Purple}{\fontsize{10}{12} $\mathcal{O}^\sigma_{t_4}$}$=$ \textcolor{myteal}{\fontsize{10}{12} $\Delta \sigma_{T^\sigma_{t_4}}$}};
	\draw[myteal, thick] (x8) -- (7.2,8);
	\draw[Purple,thick] (0,0) -- (y1) node at (-0.85,0.43) {\textcolor{myteal}{$\sigma_{T_{t_1}^\sigma-}$} \textcolor{black}{$=$} \textcolor{Purple}{$t_1$}};
	\draw[dashed, color = Purple] (y1) -- (y2);
	\draw[Purple,thick] (y2) -- (y3);
	\draw[Purple,thick] (y3) -- (y4) node at (-0.85,1.98) {\textcolor{myteal}{$\sigma_{T_{t_2}^\sigma-}$} \textcolor{black}{$=\,$}\textcolor{Purple}{$t_2$}};
	\draw[dashed, color = Purple] (y4) -- (y5);
	\draw[Purple,thick] (y5) -- (y6);
	\draw[Purple,thick] (y6) -- (y7) node at (-0.85,3.43) {\textcolor{myteal}{$\sigma_{T_{t_3}^\sigma-}$} \textcolor{black}{$=\,$}\textcolor{Purple}{$t_3$}};
	\draw[dashed, color = Purple] (y7) -- (y8);
	\draw[Purple,thick] (y8) -- (y9);
	\draw[Purple,thick] (y9) -- (y10) node at (-0.85,4.93) {\textcolor{myteal}{$\sigma_{T_{t_4}^\sigma-}$} \textcolor{black}{$=\,$}\textcolor{Purple}{$t_4$}};
	\draw[dashed, color = Purple] (y10) -- (y11);
	\draw[Purple,thick] (y11) -- (y12);
	\draw[Purple,thick] (y12) -- (0,8);
	\draw[Purple] node at (2.6,6) {\textcolor{Purple}{$(\mathcal{O}^\sigma_t = \sigma_{T^\sigma_t} - t)_{t \geq 0}$}};
	\end{tikzpicture}
	\caption{Path of a compound Poisson subordinator with drift, $\sigma$, and associated overshoot process $\cO^\sigma$} \label{fig: overshoot plot}
\end{figure}
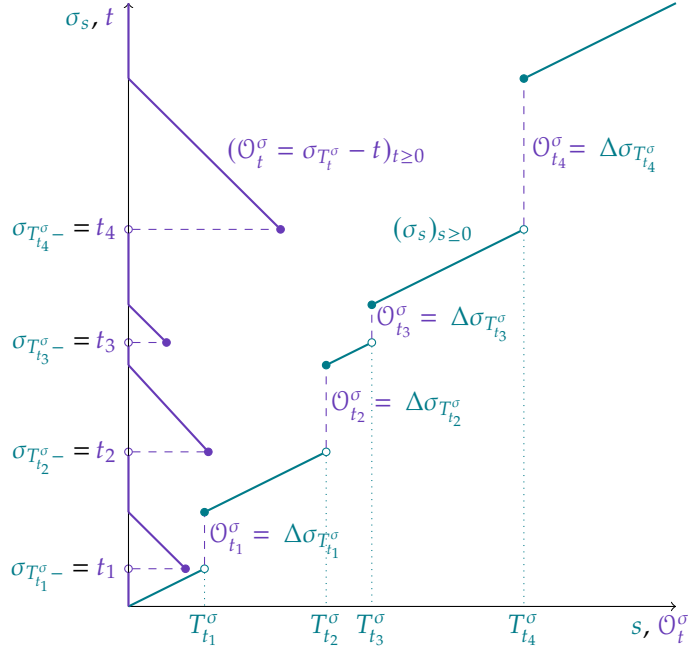

Let $\cG_t \coloneq \cF_{T_t}$ for $t \geq 0$ and define the filtration $\G \coloneq (\cG_t)_{t \geq 0}$. The following technical results hold.

\begin{lemma} \label{lemma: filtration}
$\G$ is right-continuous and complete.
\end{lemma}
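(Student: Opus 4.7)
The plan is to reduce right-continuity of $\G$ to right-continuity in $t$ of the first-passage map $t \mapsto T_t$, and then to invoke the standard fact that if a family of stopping times decreases to a stopping time, the $\sigma$-algebras decrease correspondingly. Since $(\xi,J)$ is a Feller process, its natural completed filtration $\F = (\cF_t)_{t\geq 0}$ is right-continuous, so for any stopping time $T$ we have $\cF_T = \{A \in \cF_\infty : A \cap \{T < u\} \in \cF_u \text{ for all } u \geq 0\}$ and, more importantly, for a non-increasing sequence $T_n \downarrow T$ of stopping times with $T$ itself a stopping time, $\cF_T = \bigcap_n \cF_{T_n}$.

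First I would verify that $t \mapsto T_t$ is non-decreasing and right-continuous. Monotonicity is obvious from the inclusion $\{s \geq 0 : \xi_s > t'\} \subset \{s \geq 0 : \xi_s > t\}$ for $t' > t$. For right-continuity at a fixed $t \geq 0$, take $t_n \downarrow t$ and set $L \coloneq \lim_n T_{t_n} \geq T_t$. For any $s > T_t$, the definition of $T_t$ as an infimum furnishes some $u \in [T_t, s)$ with $\xi_u > t$; in fact, by the càdlàg property and because $\{\xi > t\}$ is open to the right of every point it contains, such $u$ exists. Since $t_n \downarrow t$ and $\xi_u > t$ strictly, we have $\xi_u > t_n$ for all sufficiently large $n$, and hence $T_{t_n} \leq u < s$, so $L \leq s$. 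Letting $s \downarrow T_t$ yields $L = T_t$.

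Next I would use that each $T_t$ is a stopping time (already noted in the excerpt via right-continuous paths and right-continuity of $\F$) so that $T_s \downarrow T_t$ as $s \downarrow t$ is a non-increasing sequence of stopping times converging to a stopping time. The standard identity for such sequences then gives
\begin{equation*}
\bigcap_{s > t} \cF_{T_s} \;=\; \cF_{T_t}.
\end{equation*}
Translating back to $\G$, and noting that monotonicity $T_t \leq T_s$ for $t \leq s$ already yields the trivial inclusion $\cG_t \subset \bigcap_{s > t} \cG_s$, we conclude
\begin{equation*}
\cG_t \;=\; \cF_{T_t} \;=\; \bigcap_{s > t} \cF_{T_s} \;=\; \bigcap_{s > t} \cG_s,
\end{equation*}
which is precisely the right-continuity of $\G$.

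The only genuinely non-automatic step is the right-continuity of $t \mapsto T_t$, and even that is essentially a one-line consequence of càdlàg paths together with strict inequality in the defining set $\{\xi_s > t\}$; the potential subtlety would be a situation in which $\xi$ approaches $t$ from below without crossing, but the strict inequality in the definition of $T_t$ handles this cleanly. Everything else is bookkeeping with the measurability of first-passage times for Feller processes.
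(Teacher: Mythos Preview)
Your proof is correct and follows essentially the same route as the paper: both hinge on right-continuity of $t \mapsto T_t$ together with right-continuity of $\F$. The paper unpacks the identity $\bigcap_n \cF_{T_{t+1/n}} = \cF_{T_t}$ directly via the characterization $\cF_{T_t+} = \{\Lambda : \Lambda \cap \{T_t < s\} \in \cF_s \text{ for all } s\}$ and the equality $\{T_t < s\} = \bigcup_n \{T_{t+1/n} < s\}$, whereas you cite this as the standard fact about decreasing sequences of stopping times under a right-continuous filtration; apart from that packaging difference the two arguments coincide.
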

\begin{proof}
First note that $\cF_{T_t} = \cF_{T_t+}$, with
$$\cF_{T_t+} \coloneq \big\{\Lambda \in \cF: \Lambda \cap \{T_t < s\} \in \cF_s \mbox{ for all } s \geq 0\big\}$$
since the latter can be shown to be equal to
$$\big\{\Lambda \in \cF: \Lambda \cap \{T_t \leq s\} \in \cF_{s+} \mbox{ for all } s \geq0\big\}$$
which in turn equals $\cF_{T_t}$ thanks to right-continuity of $\F$. Letting $\Lambda \in \cG_{t+} = \bigcap_{n \in \N} \cF_{T_{t + 1 \slash n}}$ we obtain by right-continuity of $t \mapsto T_t$ that for any $s \geq 0$
$$\Lambda \cap \{T_t < s\} = \bigcup_{n \in \N} \Lambda \cap \big\{T_{t+ \frac{1}{n}} < s\big\} \in \cG_s,$$
since any set in the right-hand union belongs to $\cG_s$ thanks to $\cF_{T_{t + 1\slash n}} = \cF_{T_{(t + 1 \slash n)}+}.$ It follows that $\Lambda \in \cF_{T_t+} = \cF_{T_t} = \cG_t,$ which proves right-continuity of $\G$. Completeness of $\mathbb{G}$ follows from the fact the $\PP^{x,i}$-augmentation of $\cF_{T_t}$ is equal to $\cF_{T_t}$ itself, since $\F$ is $\PP^{x,i}$-augmented already, see also p.36\ of \cite{blumenthal1968}.
\end{proof}

\begin{corollary}
For any $0< s \leq \infty$ the running supremum $\overbar{\xi}_s$ is a stopping time with respect to $\G$. In particular, the lifetime $\overbar{\xi}_\infty$ of $(\cO_t,\cJ_t)_{t \geq 0}$ is a $\G$-stopping time.
\end{corollary}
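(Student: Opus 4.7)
The aim is to show $\{\overbar{\xi}_s \leq t\} \in \cG_t$ for every $t \geq 0$. The plan is to express this event in terms of the first passage times $T_r$ of the ordinator, whose relation to the filtration $\G$ is built into the definition $\cG_t = \cF_{T_t}$.

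For finite $s$, I would first establish the key set identity
\begin{equation*}
\{\overbar{\xi}_s \leq t\} = \bigcap_{n \in \N} \{T_{t + 1/n} > s\}.
\end{equation*}
The inclusion ``$\supseteq$'' is immediate: if $T_{t + 1/n} > s$ for every $n$, then $\xi_u \leq t + 1/n$ for all $u \leq s$, whence $\overbar{\xi}_s \leq t + 1/n$, and sending $n \to \infty$ yields $\overbar{\xi}_s \leq t$. The opposite inclusion ``$\subseteq$'' is the only delicate point and is where I expect the main obstacle to lie. Starting from $\overbar{\xi}_s \leq t$, one immediately obtains $T_{t + 1/n} \geq s$ since no $u \leq s$ can satisfy $\xi_u > t + 1/n$; the hypothetical equality $T_{t + 1/n} = s$ must then be excluded. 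Here right-continuity of the $\xi$-paths is essential: since $\xi_s \leq \overbar{\xi}_s \leq t < t + 1/n$, the equality $T_{t+1/n} = s$ would force, by definition of the infimum, a sequence $u_k \downarrow s$ with $\xi_{u_k} > t + 1/n$, contradicting $\xi_{u_k} \to \xi_s$.

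With the identity in hand, the argument finishes quickly. Each $T_{t + 1/n}$ is an $\F$-stopping time, so the standard property of stopping-time $\sigma$-algebras gives $\{T_{t + 1/n} \leq s\} \in \cF_{T_{t + 1/n}} = \cG_{t + 1/n}$; passing to complements and intersecting over $n$ yields
\begin{equation*}
\{\overbar{\xi}_s \leq t\} \in \bigcap_{n \in \N} \cG_{t + 1/n} = \cG_{t+} = \cG_t,
\end{equation*}
where the last equality is the right-continuity of $\G$ established in Lemma \ref{lemma: filtration}.

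For the case $s = \infty$, monotonicity of the running supremum gives $\overbar{\xi}_\infty = \sup_{n \in \N} \overbar{\xi}_n$ pointwise, so $\{\overbar{\xi}_\infty \leq t\} = \bigcap_{n \in \N} \{\overbar{\xi}_n \leq t\}$ lies in $\cG_t$ by the finite case just treated. In particular, the lifetime $\overbar{\xi}_\infty$ of $(\cO_t,\cJ_t)_{t \geq 0}$ is a $\G$-stopping time, as claimed.
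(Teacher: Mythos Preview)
Your proof is correct and follows essentially the same strategy as the paper: express the event $\{\overbar{\xi}_s \leq t\}$ via first passage times, use that $\{T_r > s\} \in \cF_{T_r} = \cG_r$, and invoke right-continuity of $\G$ from Lemma~\ref{lemma: filtration}. The paper is slightly terser, working with the identity $\{\overbar{\xi}_s < t\} = \{T_t > s\}$ directly for all $s \in [0,\infty]$ and then passing to $\{\overbar{\xi}_s \leq t\} \in \cG_{t+}$; your version with $\{\overbar{\xi}_s \leq t\} = \bigcap_n \{T_{t+1/n} > s\}$ is a bit more explicit about the right-continuity step and handles the $s=\infty$ case separately, but the substance is the same.
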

\begin{proof}
Let $s \in (0,\infty].$ For any $t \geq 0$
$$\Big\{\sup_{u \in [0,s)} \xi_u > t\Big\} = \{T_t < s\} \in \cF_{T_t} = \cG_t,$$
which implies that $\sup_{u \in [0,s)} \xi_u$ is a $\mathbb{G}$ stopping time. Moreover, for $s \in (0,\infty)$, quasi-left-continuity of $(\xi,J)$ implies that $\xi_{s-} = \xi_s$, $\PP^\ast$-a.s., and therefore $\overbar{\xi}_s = \sup_{u \in [0,s)} \xi_u$, $\PP^\ast$-a.s.. Since $\mathbb{G}$ is complete by Lemma \ref{lemma: filtration} and $\sup_{u \in [0,s)} \xi_u$ is a $\mathbb{G}$-stopping time, we conclude that $\overbar{\xi}_s$ is a $\mathbb{G}$-stopping time as well.
\end{proof}

We now show that under a technical assumption, the overshoot process given by the quintuple $(\Omega,\cF,\G,(\cO_t,\cJ_t)_{t \geq 0}, (\PP^{x,i})_{(x,i)\in (\R_+ \times [n])_\vartheta})$ determines a Feller process and therefore also a Borel right process. The technical assumption under which we will be working throughout the rest of the paper without further mention, is the following.
\begin{enumerate}[leftmargin=*,label=($\mathscr{A}0$),ref=($\mathscr{A}0$)]
\item The MAP $(\xi,J)$ is upward regular, i.e.\ for any $i \in [n]$ it holds that $\PP^{0,i}(T_0 = 0) = 1$. \label{ass: regularity}
\end{enumerate}
By definition, $(\xi,J)$ is upward regular if, independently of the starting point of the modulator $J$, $\xi$ started from $0$ immediately hits the upper half line. By the path decomposition given in Proposition \ref{char map levy}, this is the case if and only if the underlying L\'evy processes $\xi^{(i)}$ are regular upward for any $i \in [n]$. Upward regularity for L\'evy processes is completely understood, see the full characterization given in Theorem 6.5 of \cite{kyprianou2014}, and hence upward regularity of the MAP can be characterized by properties of its underlying L\'evy processes. Moreover, by the general theory on local times of Markov processes, see e.g.\ Chapter 4 in Bertoin \cite{bertoin1996} or the landmark paper Blumenthal and Getoor \cite{blumenthal1964}, it follows that upward regularity implies that for each $i \in [n]$, the local time $\mathsf{L}^{(i)}$ of $(\overbar{\xi}-\xi,J)$ at $(0,i)$ is almost surely continuous and hence $\mathsf{L} = \sum_{i=1}^n \mathsf{L}^{(i)}$ is almost surely continuous as well. Hence, the right-continuous inverse $(\mathsf{L}^{-1}_t)_{t \geq 0}$, corresponding to the set of times when a new maximum of $\xi$ is reached, is strictly increasing on $[0, \mathsf{L}_\infty)$ almost surely and it follows that $H^+$ is strictly increasing up to its lifetime. This property is essential for $(\cO,\cJ)$ being a Feller process, as the proof of the following proposition shows.

\begin{proposition} \label{overshoot feller}
$(\cO,\cJ)$ is a càdlàg Feller process with lifetime $\overbar{\xi}_\infty$.
\end{proposition}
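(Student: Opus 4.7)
The plan is to verify the three defining ingredients of a càdlàg Feller process: càdlàg sample paths, the semigroup $P_t$ preserving $\cC_0(\R_+\times\Theta)$, and strong continuity $\|P_tf-f\|_\infty\to 0$ as $t\downarrow 0$. Throughout, I would work with the indistinguishability of $(\cO,\cJ)$ from the overshoot process of the ascending ladder MAP $(H^+,J^+)$ together with the explicit marginal formula \eqref{eq: overshoot law} and the sawtooth identity \eqref{eq: sawtooth}. The decisive ingredient is upward regularity \ref{ass: regularity}: it forces $H^+$ to be strictly increasing on $[0,\mathsf{L}_\infty)$, and this will simultaneously secure (i) càdlàg paths, (ii) the initial condition $(\cO_0,\cJ_0)=(x,i)$ under $\PP^{x,i}$, and (iii) atom-freeness of the ladder potentials needed for the continuity step.

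For càdlàg paths, strict increase of the right-continuous path $H^+$ forces the first-passage inverse $T^+_t=\inf\{s:H^+_s>t\}$ to be continuous in $t$, constant precisely on the intervals $[H^+_{s-},H^+_s)$ associated with jumps of $H^+$. Composing $T^+$ with the càdlàg paths of $(H^+,J^+)$ and subtracting $t$ in the first coordinate recovers the familiar sawtooth behaviour visualised in Figure~\ref{fig: overshoot plot}: linear descent at unit speed between jump-overs, upward jumps at the top of each jump of $H^+$. The lifetime is identified as $\overbar{\xi}_\infty$ from $\{T_t<\infty\}=\{t<\overbar{\xi}_\infty\}$ together with the convention $(\cO_t,\cJ_t)=\vartheta$ for $t\geq\overbar{\xi}_\infty$.

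For $P_t\cC_0(\R_+\times\Theta)\subset\cC_0(\R_+\times\Theta)$, I would combine \eqref{eq: sawtooth} with spatial homogeneity and \eqref{eq: overshoot law} in the $(H^+,J^+)$-picture to obtain
\begin{equation*}
P_tf(x,i)=\begin{cases}f(x-t,i), & x\geq t,\\ \E^{0,i}[f(\cO_{t-x},\cJ_{t-x})], & 0\leq x\leq t.\end{cases}
\end{equation*}
The sawtooth branch inherits continuity in $x$ and vanishing at infinity from $f\in\cC_0$. On $[0,t]$ the problem reduces to continuity of $u\mapsto\E^{0,i}[f(\cO_u,\cJ_u)]$ on $(0,t]$, which follows from \eqref{eq: overshoot law} once one notes that strict increase of $H^+$ prevents the potentials $U^+_{i,k}$ from having atoms, so $u\mapsto\int_{[0,u)}g(v)\,U^+_{i,k}(\d v)$ is continuous for any bounded $g$; dominated convergence applied to the $\Pi^+_j$- and $F^+_{k,j}$-integrations against $f$ then finishes the step. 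Agreement at the junction $x=t$ reduces to $(\cO_u,\cJ_u)\to(0,i)$ in $\PP^{0,i}$-probability as $u\downarrow 0$, which is immediate from upward regularity and right-continuity of $J^+$ at $0$.

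Strong continuity at $0$ is then settled by splitting $\|P_tf-f\|_\infty$ along the two branches: on $\{x\geq t\}$ it is bounded by the uniform continuity modulus of $f\in\cC_0$, while on $\{0\leq x\leq t\}$ a triangle inequality bounds it by $\sup_{u\leq t}|\E^{0,i}[f(\cO_u,\cJ_u)]-f(0,i)|$ plus the same modulus term, both of which vanish as $t\downarrow 0$ by the arguments already in place. I expect the main obstacle to be the interior continuity in $u$ used above: one has to control the behaviour of the convolutions in \eqref{eq: overshoot law} uniformly in the moving endpoint $u$, ruling out atoms of the ladder potentials precisely at $u$. This is exactly what upward regularity delivers via strict increase of $H^+$, and it is reassuring that the same assumption that enables the whole Markovian viewpoint also powers the analytical step.
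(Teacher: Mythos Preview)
There are two genuine gaps. First, you never verify the Markov property: you refer to ``the semigroup $P_t$'' and check the two Feller axioms, but the semigroup identity $P_{t+s}=P_tP_s$ is precisely the Markov property of $(\cO,\cJ)$ with respect to $\G=(\cF_{T_t})_{t\geq 0}$, and it must be proved. The paper does this by applying the strong Markov property of $(\xi,J)$ at the stopping time $T_t$ together with conditional spatial homogeneity; it is a short computation but not automatic, and without it there is no semigroup to test for the Feller property.

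Second, your route to continuity of $u\mapsto\E^{0,i}[f(\cO_u,\cJ_u)]$ via the explicit law \eqref{eq: overshoot law} is more fragile than you suggest. That formula, being derived from the compensation identity \eqref{eq: compensation map2}, records only the jump-over contribution; the creeping atom $\sum_j f(0,j)\,\PP^{0,i}(\cO_u=0,\cJ_u=j)$ is unaccounted for and its continuity in $u$ would need a separate argument. More seriously, after the change of variable the relevant integral is $\int_{(0,u]}G(u-w)\,U^+_{i,j}(\diff{w})$ with $G(v)=\int f(y,j)\,\Pi^+_j(v+\diff{y})$: atom-freeness of $U^+_{i,j}$ yields continuity of $u\mapsto\int_{(0,u]}g(w)\,U^+_{i,j}(\diff{w})$ only for a \emph{fixed} bounded $g$, whereas here the integrand itself shifts with $u$ and may blow up as $w\uparrow u$ when $\Pi^+_j$ is infinite, so your dominated-convergence step is not justified as written. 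The paper sidesteps all of this: right-continuity in $u$ follows directly from c\`adl\`ag paths, while for the delicate left-continuity it observes that $T^+_{u_n}\uparrow T^+_u$ as $u_n\uparrow u$ (strict increase of $H^+$ enters here) and then invokes quasi-left-continuity of the Feller process $(H^+,J^+)$ to conclude $(H^+_{T^+_{u_n}},J^+_{T^+_{u_n}})\to(H^+_{T^+_u},J^+_{T^+_u})$ almost surely. This single probabilistic step handles creeping and the infinite-activity case simultaneously, with no recourse to the overshoot formula.
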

\begin{proof}
Càdlàg paths of the process are a direct consequence of càdlàg paths of $(\xi,J)$ and the fact that $t \mapsto T_t$ is right-continuous on $[0,\infty)$ and increasing on $[0,\overbar{\xi}_\infty)$. Let now $f \in \cB_b((\R_+ \times [n])_\vartheta)$ and $(x,i) \in (\R_+ \times [n])_\vartheta.$ Recalling that $\overbar{\xi}_\infty$ is a $\G$-stopping time and using $T_{t+s} = T_{t} + T_{t+s} \circ \theta_{T_t},$ on $\{T_t < \infty\}$, where $(\theta_t)_{t \geq 0}$ are the transition opertors of $(\xi,J)$, it follows that $\PP^{x,i}$-a.s.\
\begin{align*}
  \E^{x,i}[f(\cO_{t+s}, \cJ_{t+s}) \vert \cG_t] &= \E^{x,i}\left[f\big(\xi_{T_{t+s}} -(t+s), J_{T_{t+s}}\big)\circ \theta_{T_t} \vert \cF_{T_t}\right] \one_{\{t < \overbar{\xi}_\infty\}} + f(\vartheta)\one_{\{t \geq \overbar{\xi}_\infty\}} \\
  &= \E^{\xi_{T_t}, J_{T_t}}\left[f\big(\xi_{T_{t+s}} - (t+s),J_{T_{t+s}}\big)\right]\one_{\{t < \overbar{\xi}_\infty\}} + f(\vartheta)\one_{\{t \geq \overbar{\xi}_\infty\}}\\
  &= \E^{\xi_{T_t} - t, J_{T_t}}\left[f\big(\xi_{T_s} - s,J_{T_s}\big)\right]\one_{\{t < \overbar{\xi}_\infty\}} + f(\vartheta)\one_{\{t \geq \overbar{\xi}_\infty\}}\\
  &= \E^{\cO_t, \cJ_t}\left[f\big(\cO_s,\cJ_s\big)\right].
\end{align*}
Here, we used the strong Markov property of $(\xi,J)$ for the second and spatial homogeneity of $\xi$ for the third equality. This proves the Markov property of $(\cO,\cJ)$. Moreover, for $x > 0$ and $i \in [n]$ we have $\PP^{x,i}(T_0 = 0) = 1$ and by upward regularity of $\xi$ we also have $\PP^{0,i}(T_0 = 0) = 1$. Thus, $\PP^{x,i}(\cO_0,\cJ_0) = (x,i)$ for any $(x,i) \in (\R_+ \times [n])_\vartheta$, i.e.\ the process is a normal Markov process and its lifetime is given by $\overbar{\xi}_\infty$ by construction. Let $(\mathcal{P}_t)_{t \geq 0}$ be its sub-Markov transition semigroup, i.e.\
$$\mathcal{P}_t f(x,i) = \E^{x,i}[f(\cO_t,\cJ_t);\ t < \overbar{\xi}_\infty], \quad (x,i) \in (\R_+ \times [n])_\vartheta, f \in \cB_b((\R_+ \times [n])_{\vartheta}).$$
Let us check the Feller property. Let $f \in \cC_0(\R_+ \times [n])$. Since $[n]$ is finite and recalling our convention that $f(\vartheta) = 0$, it suffices to show for fixed $i \in [n]$ that
$x \mapsto \mathcal{P}_t f(x,i) = \E^{x,i}[f(\cO_t,\cJ_t)]$ is continuous to prove that $(x,i) \mapsto \E^{x,i}[f(\cO_t,\cJ_t)]$ is continuous. If $x > t$ this is obvious. For $x \leq t$ let first $y \uparrow x$. By right-continuity of $t \mapsto (\cO_t,\cJ_t)$, continuity and boundedness of $f$, dominated convergence and conditional spatial homogeneity of $(\xi,J)$, it follows that
$$\lim_{y \uparrow x} \E^{y,i}[f(\cO_t,\cJ_t)] = \lim_{y \uparrow x} \E^{0,i}[f(\cO_{t-y},\cJ_{t-y})] = \E^{0,i}[f(\cO_{t-x},\cJ_{t-x})] = \E^{x,i}[f(\cO_{t},\cJ_{t})],$$
showing left-continuity of $x \mapsto \E^{x,i}[f(\cO_{t},\cJ_{t})].$ To show right-continuity, note that for $y \downarrow x$ it holds that $T^+_{t-y}$ increases to $\inf\{s \geq 0: H^+_s \geq t - x\}$ on $\{T^+_{t-x} < \infty\}$ and since $H^+$ is strictly increasing up to its lifetime by upward regularity of $\xi$, it follows that the latter hitting time is almost surely equal to $T^+_{t-x}.$ Since $(H^+,J^+)$ as a Feller process is quasi-left-continuous, it therefore follows that on $\{T^+_{t-x} < \infty\}$,
$$\lim_{y \downarrow x} \Big(H^+_{T^+_{t-y}}, J^+_{T^+_{t-y}}\Big) = \Big(H^+_{T^+_{t-x}}, J^+_{T^+_{t-x}}\Big), \quad \PP^{0,i}\text{-a.s.}$$
By indistinguishability of $(\cO^+,\cJ^+)$ and $(\cO,\cJ)$ we therefore obtain
$$\lim_{y \downarrow x} \E^{y,i}[f(\cO_t,\cJ_t)] = \lim_{y \downarrow x} \E^{0,i}[f(\cO^+_{t-y},\cJ^+_{t-y})] = \E^{0,i}[f(\cO^+_{t-x},\cJ^+_{t-x})] = \E^{x,i}[f(\cO_{t},\cJ_{t})],$$
proving also right-continuity of $(x,i) \mapsto \mathcal{P}_tf(x,i)$. Since moreover $[n]$ is compact and for fixed $i \in [n]$,
$$\lim_{x \to \infty} \mathcal{P}_tf(x,i) = \lim_{x \to \infty} f(x-t,i) = 0$$
thanks to $f \in \cC_0(\R_+ \times [n])$, we conclude that $\mathcal{P}_t\cC_0(\R_+ \times [n]) \subset \cC_0(\R_+ \times [n])$. Finally, for fixed $(x,i) \in \R_+ \times [n]$ (again applying to upward regularity in case $x = 0$) it follows from $T_t \to 0$ a.s.\ as $t \downarrow 0$ and dominated convergence, that $\mathcal{P}_tf(x,i) \to \mathcal{P}_0f(x,i) = f(x,i)$. This is enough to show that  $(\mathcal{P}_t)_{t \geq 0}$ is a Feller semigroup, as discussed in Appendix \ref{sec: markov stability}. Noting that $\G$ is right continuous and complete by Lemma \ref{lemma: filtration}, the proof is complete.
\end{proof}

Having established the Markovian nature of the overshoot process, we now proceed by investigating its stability properties and long-time behavior. We must therefore restrict to the case, when the overshoot process is almost surely unkilled, which is the case if and only if $\sup_{0 \leq s < \infty} \xi_s = \infty$, $\PP^{0,i}$-a.s.\ for all $i \in [n]$. As for L\'evy processes, there is a dichotomy concerning the long-time behavior of the ordinator $\xi$, namely that exactly one of the following cases can occur:
\begin{enumerate}[label=(\alph*),ref=(\alph*)]
  \item for any $(x,i) \in \R \times [n]$, $\limsup_{t \to \infty} \xi_t = \infty$, $\PP^{x,i}$-almost surely, and in this case either $\liminf_{t \to \infty} \xi_t = - \infty$ or $\lim_{t \to \infty} \xi_t = \infty$, $\PP^{x,i}$-a.s.;
  \item for any $(x,i) \in \R \times [n]$, $\lim_{t \to \infty} \xi_t = -\infty$, $\PP^{x,i}$-almost surely.
\end{enumerate}
When $J$ is irreducible and the MAP's ordinator possesses an exponential moment, which of these cases occurs for a given MAP is determined by a Perron--Frobenius type eigenvalue of the MAP's Laplace exponent, see Asmussen \cite[Proposition XII.2.10]{asmussen2003}. We will therefore henceforth work under the following additional assumption, which guarantees that $(\cO,\cJ)$ is an unkilled Borel right Markov process and therefore gives us access to the theory of stability for Markov processes teased in Appendix \ref{sec: markov stability}.
\begin{enumerate}[leftmargin=*,label=($\mathscr{A}1$),ref=($\mathscr{A}1$)]
\item For any $(x,i) \in (\R \times [n])$ it $\PP^{x,i}$-almost surely holds $\limsup_{t \to \infty} \xi_t = \infty.$ \label{ass: long time}
\end{enumerate}
Let us give the following definition.
\begin{definition}
Let $\bm{A} = (a_{i,j})_{i,j = 1,\ldots, n} \in \R^{n \times n}$ be a matrix with $a_{i,j} \geq 0$ for any $i \neq j$. We say that $\bm{A}$ is \textit{irreducible}, if for any $i \neq j$ there exists $(a_{i_k,i_{k+1}})_{k=0,\ldots,m-1}$ for some $m \in  \N$ with $i_0 = i, i_m = j$ such that $\prod_{k=0}^{m-1} a_{i_k,i_{k+1}} > 0.$ A matrix $\tilde{\bm{A}} = (\tilde{a}_{i,j})_{i,j = 1,\ldots,n}$ such that $\mathrm{diag}(\bm{A}) = \mathrm{diag}(\tilde{\bm{A}})$ and $\tilde{a}_{i,j} \in \{a_{i,j},0\}$ for any $i \neq j$ is said to be a \textit{minimal irreducible version} of an irreducible matrix $\bm{A}$, if any matrix obtained from $\tilde{A}$ by setting some off-diagonal element to $0$ is not irreducible anymore. 
\end{definition}
If we visualize a matrix $\bm{A}$ as in the definition above as a directed graph with vertices $V = \{1,\ldots,n\}$ representing the on-diagonal elements of $\bm{A}$ and edges $E = \{(i,j): a_{i,j} > 0\}$ representing the non-zero off-diagonal elements of $\bm{A}$, irreducibility of $\bm{A}$ is equivalent to strong connectedness of the graph of $\bm{A}$. The graph of a minimal irreducible version $\tilde{\bm{A}}$ of an irreducible matrix $\bm{A}$ is therefore a minimal strongly connected subgraph of the graph of $\bm{A}$ with $\tilde{V} = V$ and $\tilde{E} \subset E$. Also note that a continuous time Markov chain is irreducible if and only if its $Q$-matrix is irreducible. 

As a minimal requirement for stability we need to ensure irreducibility of the Markov process $(\cO,\cJ)$. We therefore introduce the following assumption.
\begin{enumerate}[leftmargin=*,label=($\mathscr{A}2$),ref=($\mathscr{A}2$)]
\item The modulator $J^+$ of the ascending ladder MAP is irreducible, i.e., $\bm{Q}^+$ is an irreducible matrix.\label{ass: irreducible}
\end{enumerate}
For general MAPs irreducibility of $J$ does not necessarily entail irreducibility of $J^+$, with the latter property being equivalent to the property that $\xi$ can reach a new maximum in any phase of $J$. E.g., if one of the L\'evy components $\xi^{(i)}$ is a negative subordinator and $\Delta_{j,i} < 0$ for any $j \in [n]$, $J^+$ is not irreducible since $\xi$ never reaches a new maximum when its phase is $i$. However, the following result shows that irreducibility of $J^+$ is given for a wide range of MAPs with irreducible modulator $J$. To give one particular example covered by Proposition \ref{prop: irr} below, suppose that for any $j \in [n]$ the L\'evy component $\xi^{(j)}$ is neither a negative subordinator nor spectrally negative with bounded variation,  or, when this fails for some $j \in [n]$ this is compensated for by some unbounded transitional jump of $\xi$ when $J$ switches to $j$. Then, $J^+$ is irreducible whenever $J$ is irreducible and Assumption \ref{ass: long time} is in place. We emphasize that upward regularity \ref{ass: regularity} is not needed for the statement of Proposition \ref{prop: irr}. Recall that for any measure $\nu$ on $(\R, \mathcal{B}(\R))$, the support $\supp(\nu)$ is defined as the set of points $x \in \R$ such that for any open neighborhood $U_x$ of $x$ it holds $\nu(U_x) > 0$.

\begin{proposition} \label{prop: irr}
Suppose that $J$ is irreducible and \ref{ass: long time} holds.
\begin{enumerate}[label = (\roman*), ref= (\roman*)]
\item Introduce the following conditions for $j \in [n]$:
\begin{enumerate}[leftmargin=*,start = 8,label=($\mathscr{\Alph*}(j))$,ref=($\mathscr{\Alph*}(j)$)]
\item $\xi^{(j)}$ is of unbounded variation or $\mathrm{supp}(\Pi_j) \cap (0,\infty) \neq \varnothing$; \label{irr ass1}
\item there exists $k \neq j$ such that $\mathrm{supp}(q_{k,j}F_{k,j})$ is unbounded from above. \label{irr ass2}
\end{enumerate}
Let $\Lambda_1 \coloneqq \{j \in [n]: \text{\ref{irr ass1} or \ref{irr ass2} holds}\}$ and 
$$\Lambda_2 \coloneqq \{j \in [n]\setminus \Lambda_1: \exists k \in \Lambda_1 \text{ s.t.\ } \supp(q_{k,j}F_{k,j}) \cap (0,\infty) \neq \varnothing\}.$$
Then, $J^+$ is irreducible if $\Lambda_1 \cup \Lambda_2 = [n]$.
\item Let $\tilde{\bm{Q}}$ be a minimal irreducible version of $\bm{Q}$. If  
\begin{align*}
&\{(i,j) \in [n]^2\setminus\{(i,i): i \in [n]\}: \tilde{q}_{i,j} > 0\} \\
&\quad \subset \{(i,j) \in [n]^2\setminus\{(i,i): i \in [n]\}: \text{\ref{irr ass1} holds or } \supp(q_{i,j}F_{i,j}) \cap (0,\infty) \neq \varnothing\},
\end{align*}
then $J^+$ is irreducible.
\end{enumerate}
\end{proposition}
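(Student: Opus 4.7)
The plan is to prove irreducibility of the rate matrix $\bm{Q}^+$ by establishing that for every pair $(i,j) \in \Theta^2$, the modulator $J^+$ hits $j$ with positive probability when started from $J^+_0 = i$. Since $J$ is irreducible and $\limsup_{t \to \infty}\xi_t = \infty$ almost surely, the ladder MAP $(H^+,J^+)$ has infinite lifetime and there are countably infinitely many ladder epochs. It therefore suffices to construct, for each $j \in \Theta$, a positive-probability event under $\PP^{0,i}$ on which some ladder epoch carries $J$ to state $j$; combined with the finiteness of $\Theta$, this gives a single communicating class for $J^+$.

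For part (i), I would handle $\Lambda_1$ and $\Lambda_2$ in turn. For $j \in \Lambda_1$, the construction depends on which of \ref{irr ass1} or \ref{irr ass2} holds. Under \ref{irr ass1}, the L\'evy component $\xi^{(j)}$ has intrinsic upward activity: if $\xi^{(j)}$ is of unbounded variation, upward regularity forces $\xi$ across its running maximum instantly during any $j$-sojourn starting at the maximum, while a positive jump in $\Pi_j$ plays the same role during a sufficiently long sojourn starting near the maximum. Under \ref{irr ass2}, the transitional law $F_{k,j}$ has unbounded positive support, so the $k\to j$ transition can overshoot any prescribed deficit $\overbar{\xi}-\xi$ with positive probability. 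In each case, one engineers the required event by conditioning at an existing ladder epoch (at which $\xi$ sits at its running maximum), invoking irreducibility of $J$ to reach the relevant intermediate state, and then activating the upward mechanism before $\xi$ has drifted far from the current maximum. For $j \in \Lambda_2$, the same scheme applies with an intermediate $k \in \Lambda_1$ already known by the previous step to be visited by $J^+$, and the positive-support jump $\Delta_{k,j}$ then producing a ladder event with $J = j$.

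Part (ii) follows by iterating this construction along the edges of the minimal connected subgraph encoded by $\tilde{\bm{Q}}$: the hypothesis guarantees that each such edge $(i,j)$ can be traversed with positive probability by $J^+$ via one of the two upward-push mechanisms, and connectedness of the $\tilde{\bm{Q}}$-graph then lifts to irreducibility of $\bm{Q}^+$.

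The main obstacle is to formalize the positive-probability estimates rigorously: one must verify quantitatively that $\xi$ stays close enough to its running maximum for the chosen upward event to actually produce a new maximum at the right moment. This requires combining the strong Markov property applied at carefully chosen stopping times (ladder epochs and first hitting times of $J$), the compensation formula \eqref{eq: compensation map2}, and the path decomposition of Proposition \ref{char map levy}, which together control the joint dynamics of $(\xi,J)$ during the intermediate state sequence leading up to the critical ladder-producing transition.
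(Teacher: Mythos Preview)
Your overall strategy matches the paper's: show that for each target state $j$ some ladder epoch occurs while $J$ is in state $j$, and assemble these into irreducibility of $J^+$. The case split $\Lambda_1$ versus $\Lambda_2$ and the edge-by-edge argument for part (ii) are also what the paper does.

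However, your treatment of case \ref{irr ass1} contains a genuine gap that the paper avoids entirely. You propose to condition at a ladder epoch, steer $J$ toward state $j$ via irreducibility, and then activate the upward mechanism ``before $\xi$ has drifted far from the current maximum.'' You correctly flag controlling this drift as the main obstacle---and it is a real one, since the intermediate L\'evy components may carry $\xi$ arbitrarily far below the running maximum before $J$ reaches $j$. The paper sidesteps this completely: by Theorem~24.7 in Sato~\cite{sato1999}, condition \ref{irr ass1} implies that $\supp(\PP(\xi^{(j)}_t \in \cdot))$ is unbounded from above for \emph{every} $t>0$. Hence, no matter how large the deficit $\overbar{\xi}_{\sigma_n} - \xi_{\sigma_n}$ is when $J$ enters state $j$, the process $\xi^{(j)}$ crosses it with positive probability during the subsequent sojourn. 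No drift control and no conditioning at a ladder epoch are needed---one simply picks $n$ with $\PP^{0,i}(J_{\sigma_n}=j)>0$ and applies the path decomposition of Proposition~\ref{char map levy}. Relatedly, your invocation of upward regularity is misplaced: the paper explicitly notes that \ref{ass: regularity} is not required here, and indeed the Sato argument does not use it.

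For $\Lambda_2$ and for the second alternative in part (ii), the paper does not argue directly as you sketch but invokes Lemma~\ref{lem: irr} (proved later via the \'equations amicales invers\'es), which gives $q^+_{k,j}>0$ whenever $\supp(q_{k,j}F_{k,j})\cap(0,\infty)\neq\varnothing$. A direct argument along your lines---condition on the $k\to j$ transition happening shortly after a ladder epoch in state $k$, so the deficit is small---is plausible but again requires the drift-control estimate you have not supplied. Finally, the compensation formula \eqref{eq: compensation map2} plays no role in the paper's proof.
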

\begin{proof}
\begin{enumerate}[leftmargin = *,label = (\roman*), ref= (\roman*)]
\item Fix $i,j \in [n]$ with $i \neq j$. We have to show that $\PP^{0,i}(\tau^+(j) < \infty) > 0$, where $\tau^+(j) \coloneqq \inf\{t \geq 0: J^+_t = j\}$. Recall that $(\sigma_n)_{n \in \N}$ denote the jump times of the modulating chain $J$. Let $n \in \N$ such that $\PP^{0,i}(J_{\sigma_n} = j) > 0$, which exists by irreducibility of $J$. Let $G_t \coloneq \sup\{0 \leq s < t: \xi_s = \overbar{\xi}_s\}$ be the last time before $t > 0$ at which $\xi$ attains its supremum. By construction of local time at the supremum $\mathsf{L}$, the range of $(\mathsf{L}^{-1}_t)_{t \geq 0}$ almost surely equals the set of times, when $\xi$ reaches a maximum.  Thus, we have 
\begin{equation}\label{eq: irr prop1}
\begin{split}
\PP^{0,i}(\tau^+(j) < \infty) &\geq \PP^{0,i}(G_{\sigma_{n+1}} \geq \sigma_n, J_{\sigma_n} = j)\\
&\geq \max\big\{\PP^{0,i}(\xi_{\sigma_n} \geq \overbar{\xi}_{\sigma_n-}, J_{\sigma_n} = j), \PP^{0,i}(G_{\sigma_{n+1}} > \sigma_n, J_{\sigma_n} = j) \big\}.
\end{split}
\end{equation}
Suppose first that \ref{irr ass1} holds. By the path decomposition of $(\xi,J)$ from Proposition \ref{char map levy}, we obtain 
\begin{equation}\label{eq: irr prop2}
\begin{split}
\PP^{0,i}(G_{\sigma_{n+1}} > \sigma_n, J_{\sigma_n} = j) &= \PP^{0,i}\big(\{J_{\sigma_n} = j\} \cap \{\exists t  \in (0,\sigma_{n+1} - \sigma_n): \xi_{t + \sigma_n} - \xi_{\sigma_n} \geq \overbar{\xi}_{\sigma_n} - \xi_{\sigma_n}\} \big)\\
&= \E^{0,i}\big[\PP^{0,j}(\exists t \in (0, \sigma_1): \xi_t \geq x)\vert_{x = \overbar{\xi}_{\sigma_n} - \xi_{\sigma_n}}; \, J_{\sigma_n} = j \big]\\
&\geq \E^{0,i}\big[\PP^{0,j}(\xi_{\sigma_1 \slash 2} \geq x)\vert_{x = \overbar{\xi}_{\sigma_n} - \xi_{\sigma_n}}; \, J_{\sigma_n} = j \big]\\
&= \E^{0,i}\Big[\Big(\int_0^\infty -2q_{j,j} \mathrm{e}^{2q_{j,j} t} \PP(\xi^{(j)}_{t} \geq x) \diff{t} \Big)\Big\vert_{x = \overbar{\xi}_{\sigma_n} - \xi_{\sigma_n}}; \, J_{\sigma_n} = j \Big]\\
& > 0.
\end{split}
\end{equation}
To argue that the last inequality holds, note that since \ref{irr ass1} was assumed, Theorem 24.7 in \cite{sato1999} yields that for any $t > 0$, $\supp(\PP(\xi^{(j)}_t \in \cdot))$ is not bounded from above. Thus, $\PP(\xi^{(j)}_{t} \geq x) > 0$ for any $x \in \R$ and hence 
$$\Big(\int_0^\infty -2q_{j,j} \mathrm{e}^{2q_{j,j} t} \PP(\xi^{(j)}_{t} \geq x) \diff{t} \Big)\Big\vert_{x = \overbar{\xi}_{\sigma_1} - \xi_{\sigma_1}} > 0, \quad \PP^{0,i}\text{-a.s.}.$$
Combining this with $\PP^{0,i}(J_{\sigma_n} = j) > 0$ by our choice of $n \in \N$, the inequality follows.

Suppose now that \ref{irr ass2} holds, i.e., $\supp(F_{k,j})$ is unbounded from above for some $k \neq j$ s.t.\ $q_{k,j} >  0$. Let $m \in \N$ such that $\PP^{0,i}(J_{\sigma_{m-1}} = k ,J_{\sigma_m} = j) > 0$, which exists by irreducibility of $J$ and $q_{k,j} > 0$. Then, again by Proposition \ref{char map levy},
$$\PP^{0,i}(\xi_{\sigma_m} \geq \overbar{\xi}_{\sigma_{m-}}, J_{\sigma_m} = j) = \sum_{k \neq j} \E^{0,i}\big[\PP(\Delta_{k,j} \geq x)]\vert_{x = \overbar{\xi}_{\sigma_m-}- \xi_{\sigma_{m}-}}; \, J_{\sigma_{m-1}} = k, J_{\sigma_m} = j\big] > 0,$$
where the inequality follows from 
$$\PP(\Delta_{k,j} \geq x)\vert_{x = \overbar{\xi}_{\sigma_m-}- \xi_{\sigma_{m}-}} > 0, \quad \PP^{0,i}\text{-a.s.},$$
thanks to assumed unboundedness of the support of $F_{k,j}$. We therefore conclude with \eqref{eq: irr prop1} that $\PP^{0,i}(\tau^+(j) < \infty) > 0$ for $j \in \Lambda_1$. Suppose now that $j \in \Lambda_2$, i.e., there exists $k \in \Lambda_1$ s.t.\ $\supp(q_{k,j}F_{k,j}) \cap (0,\infty) \neq \varnothing.$ Then, by Lemma \ref{lem: irr}, $q^+_{k,j} > 0$ and since $k \in \Lambda_1$, it follows from above that $\PP^{0,i}(\tau^+(k) < \infty) > 0$. Combining these observations yields again $\PP^{0,i}(\tau^+(j) < \infty) > 0$. Thus, the assumption $\Lambda_1 \cup \Lambda_2 = [n]$ implies $\PP^{0,i}(\tau^+(j) < \infty) > 0$ for any $j \neq i$, as desired.
\item Let $(i,j) \in [n]^2$ with $i \neq j$ s.t.\ $\tilde{q}_{i,j} > 0$. Suppose first that \ref{irr ass1} holds. Then, $q^+_{i,j} > 0$ holds if we can show that $\PP^{0,i}(G_{\sigma_2} > \sigma_1, J_{\sigma_1} = j) > 0$. This is an immediate consequence of \eqref{eq: irr prop2} with $n=1$ since $q_{i,j} = \tilde{q}_{i,j} > 0$ implies $\PP^{0,i}(J_{\sigma_1} = j) = -q_{i,j}\slash q_{i,i} > 0$. Suppose now that $\supp(q_{i,j} F_{i,j}) \cap (0,\infty) \neq \varnothing$. Then, again by Lemma \ref{lem: irr}, $q^+_{i,j} > 0$ as well. Thus, the assumption yields
$$\{(i,j) \in [n]^2\setminus\{(i,i): i \in [n]\}: \tilde{q}_{i,j} > 0\} \subset \{(i,j) \in [n]^2\setminus\{(i,i): i \in [n]\}: q^+_{i,j} > 0\},$$
and irreducibility of $\bm{Q}^+$ follows from irreducibility of $\bm{Q}$.
\end{enumerate}
\end{proof}
Assume for the rest of this section that \ref{ass: irreducible} is satisfied and denote by $\bm{\pi}^+ = (\pi^+(1),\ldots,\pi^+(n))$ the invariant distribution of $J^+$. Our main goal is to understand the asymptotic behavior of overshoots. As a natural extension of the well-known limiting distributional behavior of overshoots of L\'evy processes, cf.\ \cite{BertoinHarnSteutel1999}, it is shown in Theorem 28 of \cite{dereich2017} that under assumptions \ref{ass: long time} and \ref{ass: irreducible} the overshoot process converges weakly to the limiting distribution
$$\rho(\diff{y},\{i\}) \coloneq \frac{1}{\E^{0,\bm{\pi}^+}[H_1^+]} \bigg(\pi^+(i) d^+_i \delta_0(\diff{y}) + \one_{(0,\infty)}(y)\Big(\pi^+(i)\overbar{\Pi}{}^+_i(y) + \sum_{j \neq i} \pi^+(j)q^+_{j,i} \overbar{F}{}^+_{j,i}(y)\Big)\diff{y}\bigg),$$
$(y,i) \in \R_+ \times[n]$, if and only if $\E^{0,\bm{\pi}^+}[H_1^+] < \infty$.\footnote{Here we made a correction to \cite{dereich2017}, since in the authors' statement the limiting distribution of the parents modulator $J$, $\bm{\pi}$, appears instead of $\bm{\pi}^+$. As argued before, irreducibility of $J$ does not necessarily imply irreducibility of $J^+$ and even when $J^+$ is irreducible, $\bm{\pi}$ and $\bm{\pi}^+$ are not the same, see \cite[Proposition 2.19]{kyprianou2020}. Our analysis will show however that stationarity of the ascending ladder height's modulator and its stationary distribution are decisive for tight overshoots.} The Feller property of the overshoot process guarantees that in this case $\rho$ is also an invariant measure. We will show that deleting the scaling factor $\E^{0,\bm{\pi}^+}[H_1^+]^{-1}$ yields the essentially unique invariant measure of the overshoot process and hence a stationary distribution coinciding with $\rho$ exists iff overshoots are tight. Moreover, we will dig deeper into the mode of convergence, establishing conditions ensuring convergence in the total variation norm and exponential or polynomial speed of convergence, which also gives new results for the special case of L\'evy process overshoots.

An analytical tool of central importance to us is the resolvent of the overshoot process. Let $(\mathcal{P}_t)_{t \geq 0}$ be the transition function of $(\cO,\cJ)$ defined by $\mathcal{P}_tf(x,i) = \E^{x,i}[f(\cO_t, \cJ_t)]$ for any $f \in \cB_b(\R_+ \times [n]) \cup \cB_+(\R_+ \times [n])$ and $(\mathcal{U_\lambda})_{\lambda > 0}$ be the associated resolvent given by
$$\mathcal{U}_\lambda f(x,i) = \int_0^\infty \mathrm{e}^{-\lambda t} \mathcal{P}_tf(x,i) \diff{t},$$
for any $\lambda > 0$. Our proof for the explicit formula of the resolvent is close in spirit to the proof for the overshoot process of a L\'evy subordinator in Blumenthal \cite{blumenthal1992}, which in turn is a special case of a general result by It\={o} for Markov processes possessing a local time at a specific point of the state space, see \cite[Theorem 2.5.5]{ito2015}. The detailed proof is quite long and can be found in Appendix \ref{app: resolvent}.

\begin{theorem}\label{resolvent overshoot}
For any $f \in \cB_+(\R_+ \times [n]) \cup \cB_b(\R_+ \times [n])$ and $x \in \R_+$ it holds that
\begin{equation} \label{eq: resolvent formula}
  (\mathcal{U}_\lambda f(x,i))^\top_{i =1, \ldots, n} = (Q_\lambda f(x,i))^\top_{i=1,\ldots n} + \mathrm{e}^{-\lambda x}\bm{\Phi}^+(\lambda)^{-1} \cdot \bm{\psi}(f,\lambda),
\end{equation}
where
$$\bm{\psi}(f,\lambda) = \left(d_i^+ f(0,i) + \int_0^\infty Q_\lambda f(x,i) \,\Pi_i^+(\diff{x}) + \sum_{j \neq i} q^+_{i,j}\E[Q_\lambda f(\Delta^+_{i,j},j)]\right)^\top_{i=1,\ldots,n}$$
and
$$Q_\lambda f(x,i) = \int_0^x \mathrm{e}^{-\lambda t} f(x-t,i) \diff{t},\quad (x,i) \in \R_+ \times [n].$$
\end{theorem}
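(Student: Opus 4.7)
The plan is to exploit the sawtooth structure \eqref{eq: sawtooth} to reduce the computation of the resolvent at $(x,i)$ with $x > 0$ to the boundary value $\mathcal{U}_\lambda f(0,i)$, and then compute the latter by decomposing $\int_0^\infty \mathrm{e}^{-\lambda t} f(\cO^+_t,\cJ^+_t)\diff t$ along the paths of the ascending ladder MAP $(H^+,J^+)$. The reduction goes as follows: split the resolvent integral at the deterministic time $t = x$. The sawtooth identity \eqref{eq: sawtooth} immediately identifies $\int_0^x \mathrm{e}^{-\lambda t}\mathcal{P}_t f(x,i)\diff t = Q_\lambda f(x,i)$, and combined with upward regularity \ref{ass: regularity} yields $\cO_x = 0$ and $\cJ_x = i$, $\PP^{x,i}$-almost surely. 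The Markov property at the deterministic time $x$ then gives $\int_x^\infty \mathrm{e}^{-\lambda t}\mathcal{P}_t f(x,i)\diff t = \mathrm{e}^{-\lambda x}\mathcal{U}_\lambda f(0,i)$, so the task reduces to showing $\mathcal{U}_\lambda f(0,i) = (\bm{\Phi}^+(\lambda)^{-1}\bm{\psi}(f,\lambda))_i$.

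For this identity, indistinguishability of $(\cO,\cJ)$ with $(\cO^+,\cJ^+)$ rewrites the quantity as $\E^{0,i}\big[\int_0^\infty \mathrm{e}^{-\lambda t} f(H^+_{T^+_t}-t, J^+_{T^+_t})\diff t\big]$. Under \ref{ass: long time}, Lebesgue-almost every level $t \geq 0$ is crossed by $H^+$ in exactly one of three ways: (i) a drift segment $d^+_k$ during a phase interval $\{J^+_s = k\}$, with overshoot $(0,k)$; (ii) an internal jump $\Delta H^+_s \sim \Pi^+_k$ of $H^+$ in phase $k$, with overshoot $(H^+_{s-}+\Delta H^+_s - t, k)$; or (iii) a transitional jump $\Delta^+_{k,j} \sim F^+_{k,j}$ at a switch of $J^+$ from $k$ to $j$, with overshoot $(H^+_{s-} + \Delta^+_{k,j}-t, j)$. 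In cases (ii) and (iii), the substitution $u = t - H^+_{s-}$ converts the level-integral over $[H^+_{s-}, H^+_s)$ to $\mathrm{e}^{-\lambda H^+_{s-}} Q_\lambda f(\Delta H^+_s, k)$, resp.\ $\mathrm{e}^{-\lambda H^+_{s-}} Q_\lambda f(\Delta^+_{k,j}, j)$; in case (i) the contribution is $d^+_k f(0,k) \int_0^\infty \mathrm{e}^{-\lambda H^+_s}\one_{\{J^+_s = k\}}\diff s$.

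Taking expectations under $\PP^{0,i}$, the drift term becomes $\sum_k d^+_k f(0,k) (\bm{\Phi}^+(\lambda)^{-1})_{i,k}$, using the Laplace identity $\int_0^\infty \E^{0,i}[\mathrm{e}^{-\lambda H^+_s}; J^+_s = k]\diff s = (\bm{\Phi}^+(\lambda)^{-1})_{i,k}$ obtained by integrating \eqref{char exp map}. The two jump contributions are handled by applying the Lévy system compensation formula \eqref{eq: compensation map2} to the MAP $(H^+,J^+)$ with predictable integrand $Z_s = \mathrm{e}^{-\lambda H^+_{s-}}$; this converts the sums over ladder jumps into integrals against $(\Pi^+_k, q^+_{k,j} F^+_{k,j})$ weighted by the potentials $U^+_{i,k}$, whose Laplace transform is again $(\bm{\Phi}^+(\lambda)^{-1})_{i,k}$. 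Collecting the three contributions by phase $k$ produces the matrix-vector product $(\bm{\Phi}^+(\lambda)^{-1}\bm{\psi}(f,\lambda))_i$. The identity is first proved for $f \in \cB_+(\R_+ \times \Theta)$ by monotone convergence and extends to $f \in \cB_b(\R_+ \times \Theta)$ by linearity.

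The main technical obstacle is the rigorous justification of the compensation step for $(H^+,J^+)$: predictability of $\mathrm{e}^{-\lambda H^+_{s-}}$, the integrability needed (for bounded $f$ one has $\|Q_\lambda f\|_\infty \leq \|f\|_\infty/\lambda$), and Fubini must all be verified to interchange summation over ladder-time jumps with the expectation and the outer $\lambda$-integral. An alternative and more abstract route --- presumably the one pursued in the paper's appendix via \cite[Theorem 2.5.5]{ito2015} --- is to apply It\={o}'s excursion theory at the regular point $0$ of $(\cO,\cJ)$: the inverse local time at $0$ is precisely $H^+$, so $\bm{\Phi}^+(\lambda)^{-1}$ naturally emerges as the Laplace transform of the sojourn-at-zero distribution and $\bm{\psi}(f,\lambda)$ as the excursion-law integral of $\int_0^\zeta \mathrm{e}^{-\lambda s} f\diff s$, bypassing the explicit path-wise bookkeeping above.
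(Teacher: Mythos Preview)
Your argument is correct and in fact more direct than what the paper does. The reduction to $\mathcal{U}_\lambda f(0,i)$ via the sawtooth structure is identical, but the computation at the boundary differs substantially.

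The paper does \emph{not} use It\={o}'s excursion theory. Instead, it first treats the case where all $\Pi^+_i$ are finite (so each $H^{+,(i)}$ is compound Poisson with drift), and decomposes the resolvent integral at the \emph{first} jump time $T$ of $(\cO,\cJ)$. This yields a renewal-type linear system in the unknowns $(\mathcal{U}_\lambda f(0,i))_{i}$, which is solved to produce \eqref{eq: resolvent formula}. The general case is then obtained by approximating $H^+$ by the compound Poisson MAPs ${}^\varepsilon H^+$ (drift $d^+_i+\varepsilon$, jumps truncated below $\varepsilon$), and passing to the limit via a careful Skorokhod-topology argument showing that ${}^\varepsilon\cO^+_t \to \cO^+_t$ and ${}^\varepsilon\cJ^+_t \to \cJ^+_t$ for Lebesgue-a.e.\ $t$. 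A monotone class argument extends from $\cC_b$ to $\cB_b$.

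Your route --- decomposing the level-integral $\int_0^\infty \mathrm{e}^{-\lambda t} f(\cO^+_t,\cJ^+_t)\diff t$ pathwise into drift-crossings and jump-crossings, then compensating the jump sum via \eqref{eq: compensation map2} applied to $(H^+,J^+)$ --- bypasses both the compound Poisson reduction and the approximation step entirely. The gain is conceptual clarity and brevity; the cost is that you must justify the pathwise change of variables $t \leftrightarrow H^+_s$ on the continuously-crossed levels (which needs the decomposition of the range of $H^+$ into its continuous part of measure $\int_0^\infty d^+_{J^+_s}\diff s$ and the jump intervals), and you must extend \eqref{eq: compensation map2} from finite $t$ to $t=\infty$, which is immediate by monotone convergence for $f\geq 0$. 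Both are routine. Note also that invertibility of $\bm{\Phi}^+(\lambda)$, which you use implicitly in the Laplace identity $\int_0^\infty(\mathrm{e}^{-s\bm{\Phi}^+(\lambda)})_{i,k}\diff s = (\bm{\Phi}^+(\lambda)^{-1})_{i,k}$, relies on irreducibility of $J^+$ via Perron--Frobenius (the paper states this explicitly at the start of its proof).
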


The resolvent formula has far reaching consequences for understanding the behavior of the MAP at first passage.
A first neat observation is the strong Feller property of the resolvent operator, which implies that $(\cO,\cJ)$ is a $T$-process.

\begin{corollary}\label{corol: t-process}
For any $\lambda > 0$ the resolvent $\mathcal{U}_\lambda$ has the strong Feller property. In particular the overshoot process $(\cO,\cJ)$ is a $T$-process.
\end{corollary}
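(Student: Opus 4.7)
The plan is to read off both claims directly from the explicit resolvent formula in Theorem \ref{resolvent overshoot}. Since $\Theta$ is finite and endowed with the discrete topology, continuity of $(x,i) \mapsto \mathcal{U}_\lambda f(x,i)$ reduces to continuity of $x \mapsto \mathcal{U}_\lambda f(x,i)$ for each fixed $i \in \Theta$, and I only need to inspect the right-hand side of \eqref{eq: resolvent formula} term by term for arbitrary $f \in \cB_b(\R_+ \times \Theta)$.

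First I would handle the term $Q_\lambda f(x,i) = \int_0^x \mathrm{e}^{-\lambda t} f(x-t,i)\,\diff{t}$. Changing variables $s = x-t$ rewrites it as $\mathrm{e}^{-\lambda x}\int_0^x \mathrm{e}^{\lambda s} f(s,i)\,\diff{s}$. The integrand in the transformed expression does not depend on $x$ and is bounded and measurable, so absolute continuity of the Lebesgue integral (or dominated convergence) shows $x \mapsto Q_\lambda f(x,i)$ is continuous on $\R_+$. Boundedness is immediate from $|Q_\lambda f(x,i)| \leq \|f\|_\infty / \lambda$. Next, the term $\mathrm{e}^{-\lambda x}\bm{\Phi}^+(\lambda)^{-1} \cdot \bm{\psi}(f,\lambda)$ is continuous and bounded in $x$ trivially, since $\bm{\psi}(f,\lambda)$ is an $x$-independent vector whose entries are bounded in $\|f\|_\infty$ (using $d_i^+ \leq \Phi_i^+(\lambda)/\lambda$-type bounds, or simply $|Q_\lambda f| \leq \|f\|_\infty/\lambda$ inside the integrals defining $\bm{\psi}$, together with $\int \Pi_i^+(\diff{x})$ being finite on $\{|x|\geq\epsilon\}$---but boundedness of $\mathcal{U}_\lambda f$ is anyway automatic from $\|\mathcal{U}_\lambda f\|_\infty \leq \|f\|_\infty/\lambda$, so this step is a sanity check rather than a real issue). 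Combining both gives $\mathcal{U}_\lambda f \in \cC_b(\R_+ \times \Theta)$ for every $f \in \cB_b(\R_+ \times \Theta)$, which is the strong Feller property.

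For the $T$-process conclusion, I would take the sampling distribution $a_\lambda(\diff{t}) = \lambda \mathrm{e}^{-\lambda t}\diff{t}$ on $(0,\infty)$ and set $T(\cdot, \cdot) \coloneq \lambda \mathcal{U}_\lambda(\cdot, \cdot) = \int_0^\infty \lambda\mathrm{e}^{-\lambda t} \mathcal{P}_t(\cdot,\cdot)\,\diff{t} = K_{a_\lambda}(\cdot,\cdot)$. Applying strong Fellerness of $\mathcal{U}_\lambda$ to indicator functions $f = \one_A$ for $A \in \cB(\R_+ \times \Theta)$ shows that $(x,i) \mapsto T((x,i),A)$ is continuous, hence in particular lower semi-continuous, for every $A$. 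Under Assumption \ref{ass: long time} the overshoot process is unkilled, so $T((x,i), \R_+ \times \Theta) = 1$, making $T((x,i),\cdot)$ a nontrivial (probability) kernel dominated by $K_{a_\lambda}$. By the definition of a $T$-process recalled in Appendix \ref{sec: markov stability}, this suffices to conclude.

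The proof is essentially a direct verification, and there is no real obstacle: the only thing to be slightly careful about is using the substitution in $Q_\lambda f$ to move the $x$-dependence onto the integration bounds, which is what makes dominated convergence applicable for general bounded measurable $f$ (without having to impose continuity of $f$).
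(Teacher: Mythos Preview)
Your proposal is correct and follows essentially the same approach as the paper: both use the resolvent formula from Theorem \ref{resolvent overshoot}, perform the same substitution $s = x - t$ to see that $Q_\lambda f(x,i) = \mathrm{e}^{-\lambda x}\int_0^x \mathrm{e}^{\lambda s} f(s,i)\,\diff{s}$ is continuous in $x$, and then take the resolvent kernel $\lambda\,\mathcal{U}_\lambda$ as a continuous component for itself to deduce the $T$-process property. Your write-up is slightly more detailed (e.g.\ explicitly invoking the non-triviality of the kernel under \ref{ass: long time}), but the argument is the same.
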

\begin{proof}
Let $\lambda > 0$ and let $f \in \cB_b(\R_+ \times [n])$. Since we can write $Q_\lambda f(x,i) = \mathrm{e}^{-\lambda x} \int_0^x \mathrm{e}^{\lambda t} f(t,i) \diff{t}$, it follows that $(x,i) \mapsto Q_\lambda f(x,i)$ is continuous and hence $(x,i) \mapsto \mathcal{U}_\lambda f(x,i)$ is clearly continuous. Moreover, $\mathcal{U}_\lambda f$ is bounded and thus, $\mathcal{U}_\lambda \cB_b(\R_+ \times [n]) \subset \mathcal{C}_b(\R_+ \times [n])$ follows, i.e.\ $\mathcal{U}_\lambda$ has the strong Feller property. Hence, the resolvent kernel $\mathcal{R}_\lambda \coloneq \lambda \mathcal{U}_\lambda$ is a continuous  component for itself, implying  that $(\cO,\cJ)$ is a $T$-process.
\end{proof}

We will also use the resolvent formula combined with Proposition \ref{invariant measure resolvent} to determine an invariant measure for the overshoot process. To show its essential uniqueness, we need to establish Harris recurrence first, which is taken care  of in the following proposition.

\begin{proposition} \label{prop: harris}
The overshoot process $(\cO,\cJ)$ is Harris recurrent.
\end{proposition}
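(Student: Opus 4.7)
The plan is to combine the $T$-process property of $(\cO,\cJ)$ established in Corollary \ref{corol: t-process} with a direct verification of $\phi$-irreducibility and non-evanescence, and then invoke the standard Meyn--Tweedie criterion recalled in Appendix \ref{sec: markov stability}: a $\phi$-irreducible Borel right $T$-process is Harris recurrent as soon as its paths do not evanesce (equivalently, as soon as it returns a.s.\ to some petite set, noting that compact sets are petite for such processes).

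For $\phi$-irreducibility, I would exploit the resolvent formula of Theorem \ref{resolvent overshoot}. Choosing $\phi$ to have support equal to the reachable part of $\R_+\times\Theta$ (informally, the support of the candidate stationary measure $\rho$ from \cite[Thm.~28]{dereich2017}), for any Borel $A$ with $\phi(A)>0$ I would argue that the first term $Q_\lambda\one_A(x,i)=\int_0^x\mathrm{e}^{-\lambda t}\one_A(x-t,i)\diff{t}$ captures visits along the deterministic sawtooth decay $\{(x-t,i):0<t\leq x\}$, while the second term $\mathrm{e}^{-\lambda x}[\bm{\Phi}^+(\lambda)^{-1}\bm{\psi}(\one_A,\lambda)]_i$ accounts for all jump-driven transitions into $A$. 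Under \ref{ass: irreducible} the sub-Markov semigroup $t\mapsto\mathrm{e}^{-\bm{\Phi}^+(\lambda)t}$, whose entries equal $\E^{0,i}[\mathrm{e}^{-\lambda H^+_t};J^+_t=j]$, is irreducible, so its resolvent $\bm{\Phi}^+(\lambda)^{-1}=\int_0^\infty\mathrm{e}^{-\bm{\Phi}^+(\lambda)t}\diff{t}$ has strictly positive entries. Hence whenever some component of $\bm{\psi}(\one_A,\lambda)$ is positive, $\mathcal{U}_\lambda\one_A(x,i)>0$ for every $(x,i)$, and positivity of some component of $\bm{\psi}(\one_A,\lambda)$ is in turn guaranteed whenever $A$ intersects either the creeping atom $\{0\}\times\{i:d_i^+>0\}$ or the absolutely continuous part built from $\Pi_i^+$ and $q^+_{i,j}F^+_{i,j}$—which by construction is the support of $\rho$.

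For non-evanescence, Assumption \ref{ass: long time} gives $\sup_{t\geq 0}\xi_t=\infty$ a.s., so $\mathsf{L}_\infty=\infty$ and the range of $\mathsf{L}^{-1}$ is unbounded. Thus $(H^+,J^+)$ produces infinitely many ladder levels, and by the sawtooth identity \eqref{eq: sawtooth} the overshoot vanishes as a left-limit at every such level. Consequently $\liminf_{t\to\infty}\cO_t=0$, $\PP^{x,i}$-a.s.\ for every $(x,i)\in\R_+\times\Theta$, so $(\cO,\cJ)$ returns to the compact (hence petite, by Corollary \ref{corol: t-process} and $\phi$-irreducibility) set $[0,1]\times\Theta$ infinitely often. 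Evanescence is ruled out, and the cited criterion delivers Harris recurrence.

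The main obstacle I anticipate is the $\phi$-irreducibility step: one must pick the reference measure $\phi$ so that its support matches the genuinely reachable part of $\R_+\times\Theta$. Depending on the Lévy system $(\bm{Q}^+,(\Pi^+_i),(F^+_{i,j}),(d^+_i))$ the support of $\rho$ may be a strict subset of $\R_+\times\Theta$ (e.g.\ if some $\Pi_i^+$ are compactly supported), and aligning $\phi$ with this support—rather than naively taking Lebesgue times counting measure—requires a careful bookkeeping of which overshoot values and phases can actually be entered through $\bm{\psi}(\one_A,\lambda)$, using \ref{ass: irreducible} to propagate reachability in one phase to reachability in all phases via $\bm{\Phi}^+(\lambda)^{-1}$.
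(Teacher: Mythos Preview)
Your overall strategy---combine the $T$-process property with $\phi$-irreducibility and non-evanescence, then apply Theorem 3.2 of \cite{MeynTweedie1993}---is exactly the route the paper takes, and your non-evanescence argument via the sawtooth decay is essentially identical to the paper's (the paper uses the compact set $\{0\}\times\Theta$ and observes $T_{\{0\}\times\Theta}=y$ under $\PP^{y,j}$, which is a slightly sharper version of your $\liminf_{t\to\infty}\cO_t=0$).

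The genuine difference lies in the irreducibility step. You propose to work directly with the resolvent formula and a measure $\phi$ whose support you must match to the reachable part of the state space; you correctly flag this bookkeeping as the main obstacle. The paper sidesteps it entirely: fixing any $j\in\Theta$, it shows that the \emph{single point} $(0,j)$ is hit with positive probability from every $(x,i)$---first reach phase $j$ (possible by irreducibility of $J^+$), then slide deterministically down to $0$---and invokes Proposition 2.1 of \cite{MeynTweedie1993}, which automatically upgrades this to irreducibility with respect to $\mathcal{R}_1((0,j),\cdot)$. In other words, the paper uses a hitting-time argument for a Dirac measure and lets Meyn--Tweedie manufacture the correct irreducibility measure, whereas you try to identify that measure explicitly via $\bm{\psi}(\one_A,\lambda)$. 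Your route works and is more informative about \emph{which} sets are accessible, but for the bare purpose of Harris recurrence the paper's shortcut is cleaner and avoids the support analysis altogether.
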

\begin{proof}
Let $j\in [n]$ be arbitrarily chosen and let $\mu \coloneqq \delta_0 \otimes \delta_j$. Fix $(x,i) \in \R_+ \times [n]$ and let $B \in \cB(\R_+ \times [n])$ such that $\mu(B) > 0$, i.e.\ $\{0\} \times \{j\} \in B$. Since $J^+$ is irreducible and $t \mapsto T_t^+$ is continuous and increases to $\infty$ as $t \to \infty$, it follows that $\PP^{x,i}(\mathfrak{t}^+(j) < \infty) > 0$, where $\mathfrak{t}^+(j) \coloneq \inf\{t> 0: \cJ_t^+ = j\}$ is the first hitting  time of $\{j\}$ of $\cJ^+$. Let $T_\Lambda = \inf\{t \geq 0: (\cO_t, \cJ_t) \in \Lambda\}$ be the first hitting time of a set $\Lambda \in \R_+ \times [n]$ by $(\cO,\cJ)$ and denote by $T^+_\Lambda$ the first hitting time of $(\cO^+,\cJ^+)$. By the sawtooth structure of $\cO^+$ we have $T^+_{\{0\}\times\{j\}} = x$, $\PP^{x,j}$-a.s.. Since $\mathfrak{t}^+(j) \leq T^+_{\{0\}\times \{j\}}$ it therefore follows by the strong Markov property of $(\cO^+,\cJ^+)$ that
\begin{align*}
\PP^{x,i}(T_B < \infty) \geq \PP^{x,i}\big(T_{\{0\} \times \{j\}} < \infty\big) &= \PP^{x,i}\big(T^+_{\{0\} \times \{j\}} < \infty\big)\\
&= \E^{x,i}\Big[\PP^{\cO^+_{\mathfrak{t}^+(j)},\cJ^+_{\mathfrak{t}^+(j)}}\big(T^+_{\{0\} \times \{j\}} < \infty\big) \one_{\{\mathfrak{t}^+(j) < \infty\}}\Big]\\
&= \PP^{x,i}(\mathfrak{t}^+(j) < \infty) > 0,
\end{align*}
where we used for the last equality that $\cJ^+_{\mathfrak{t}^+(j)} = j$ and $\cO^+_{\mathfrak{t}^+(j)} < \infty$ almost surely. It now follows from Proposition 2.1 in \cite{MeynTweedie1993} that $(\cO,\cJ)$ is irreducible with irreducibility measure
$$\mathcal{R}^\mu_1(\diff{y}) \coloneq \int_{\R_+ \times [n]} \mathcal{R}_1(x,\diff{y}) \, \mu(\diff{x}) = \mathcal{R}_1((0,j),\diff{y}), \quad y \in \R_+\times[n].$$ Moreover, $(\cO,\cJ)$ is a $T$-process by Corollary \ref{corol: t-process}. Hence, if we can argue that the process is non-evanescent, i.e. that there exists a compact set $K$ such that $(\cO,\cJ)$ returns to $K$ at arbitrarily large times, it will follow from Theorem 3.2 in \cite{MeynTweedie1993} that $(\cO,\cJ)$ is Harris recurrent. But non-evanescence is a direct consequence of the sawtooth structure of the overshoot process, since for the compact set $K \coloneq \{0\} \times [n]$ we have for any $(x,i) \in \R_+ \times [n]$ and $t > 0$
$$\PP^{x,i}(\inf\{s \geq t: (\cO_s,\cJ_s) \in \{0\}\times [n]\} < \infty) = \E^{x,i}[\PP^{\cO_{t},\cJ_t}(T_{\{0\} \times [n]} < \infty)] = 1,$$
where we used that $T_{\{0\} \times [n]} = y$, $\PP^{y,j}$-a.s.\ for any $(y,j) \in \R_+ \times [n]$ and $\cO_{t} < \infty$ almost surely. Hence, $(\cO,\cJ)$ is non-evanescent and the assertion follows.
\end{proof}

As a consequence of irreducibility implied by Harris recurrence and $(\cO,\cJ)$ being a $T$-process, we obtain that every compact set is petite, which will be useful for our proof of exponential convergence of the overshoot process later on.
\begin{corollary} \label{coro: petite}
Every compact set is petite for the overshoot process.
\end{corollary}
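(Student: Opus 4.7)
The plan is to deduce this as a direct consequence of two properties already established: Harris recurrence of $(\cO,\cJ)$ shown in Proposition \ref{prop: harris} and the $T$-process property shown in Corollary \ref{corol: t-process}. The standard Meyn--Tweedie machinery (which the paper surveys in Appendix \ref{sec: markov stability}) gives exactly this implication in the form: for a $\psi$-irreducible $T$-process, every compact subset of the state space is petite. This is essentially Theorem 4.1 of \cite{MeynTweedie1993b}.

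First I would note that Harris recurrence implies $\psi$-irreducibility of $(\cO,\cJ)$, with maximal irreducibility measure $\psi$ obtained in the usual way from any irreducibility measure. In fact, in the proof of Proposition \ref{prop: harris} we already exhibited an explicit irreducibility measure $\mathcal{R}^\mu_1$ based on the resolvent kernel started from $(0,j)$, and this suffices to put us inside the $\psi$-irreducible framework.

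Second, Corollary \ref{corol: t-process} provides that the resolvent kernel $\mathcal{R}_1 = \mathcal{U}_1$ itself is strong Feller, so $\mathcal{R}_1$ serves as a continuous component with sampling distribution $\mathrm{Exp}(1)$, i.e.\ $(\cO,\cJ)$ is a $T$-process in the sense of \cite{MeynTweedie1993b}.

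With both ingredients in place, the conclusion follows by quoting the cited theorem: in a $\psi$-irreducible $T$-process every compact set is petite. There is no real obstacle here; the entire work was already carried out in Proposition \ref{prop: harris} and Corollary \ref{corol: t-process}, and the corollary is merely the packaging of their combination through the standard characterization of petite compact sets in the Meyn--Tweedie stability theory.
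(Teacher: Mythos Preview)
Your proposal is correct and follows essentially the same approach as the paper: combine Harris recurrence (hence $\psi$-irreducibility) from Proposition \ref{prop: harris} with the $T$-process property from Corollary \ref{corol: t-process}, then invoke the standard Meyn--Tweedie result that compact sets are petite for irreducible $T$-processes. The paper cites Theorem 5.1 in \cite{Tweedie1994} rather than the corresponding statement in \cite{MeynTweedie1993b}, but the argument is otherwise identical.
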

\begin{proof}
This is an immediate consequence of Theorem 5.1 in \cite{Tweedie1994} since $(\cO,\cJ)$ is a Harris recurrent $T$-process under the given assumptions and Harris recurrence implies irreducibility.
\end{proof}

Let us now determine the essential unique invariant measure of $(\cO,\cJ)$ and also derive a necessary and sufficient condition for the existence of a unique stationary distribution, which is the same condition needed for weak convergence of overshoots.

\begin{theorem} \label{theo: invariant}
The overshoot process $(\cO,\cJ)$ has an essentially unique invariant measure given by
\begin{equation}\label{eq: invariant measure overshoots}
\chi(\diff{y},\{i\}) = \pi^+(i) d^+_i \delta_0(\diff{y}) + \one_{(0,\infty)}(y)\Big(\pi^+(i)\overbar{\Pi}{}^+_i(y) + \sum_{j \neq i} \pi^+(j)q^+_{j,i} \overbar{F}{}^+_{j,i}(y)\Big)\diff{y}, \quad (y,i) \in \R_+ \times [n].
\end{equation}
In particular, a stationary distribution for $(\cO,\cJ)$ exists if and only if
$$\E^{0,\bm{\pi}^+}[H_1^+] \coloneq \sum_{i=1}^n \pi^+(i) \E^{0,i}[H_1^+] < \infty.$$
\end{theorem}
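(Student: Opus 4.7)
The plan is to apply Proposition~\ref{invariant measure resolvent} to the explicit resolvent formula obtained in Theorem~\ref{resolvent overshoot}. Take initial distribution $\mu = \delta_0 \otimes \bm{\pi}^+$ on $\R_+ \times \Theta$ and observe that $Q_\lambda f(0, i) = 0$, so the resolvent formula reduces to
\begin{equation*}
\lambda\, \mu \mathcal{U}_\lambda f \;=\; \lambda\, \bm{\pi}^+\, \bm{\Phi}^+(\lambda)^{-1}\, \bm{\psi}(f, \lambda).
\end{equation*}
Under \ref{ass: long time} the ladder MAP $(H^+, J^+)$ is unkilled, so $\bm{\Phi}^+(0) = -\bm{Q}^+$, and under \ref{ass: irreducible} this is an irreducible generator whose simple zero eigenvalue has right and left eigenvectors $\mathbf{1}$ (the column of ones) and $\bm{\pi}^+$, normalised so that $\bm{\pi}^+ \mathbf{1} = 1$.

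Analytic perturbation around this simple eigenvalue gives the expansion $\bm{\Phi}^+(\lambda)^{-1} = \alpha(\lambda)^{-1} \mathbf{1}\bm{\pi}^+ + \bm{R}(\lambda)$, with $\bm{R}(\lambda)$ bounded as $\lambda \downarrow 0$ and the critical eigenvalue satisfying $\alpha(\lambda)/\lambda \to \bm{\pi}^+\, \bm{\Phi}^{+\prime}(0)\, \mathbf{1} = \E^{0, \bm{\pi}^+}[H_1^+]$. In the case $\E^{0, \bm{\pi}^+}[H_1^+] < \infty$ this yields $\lambda \bm{\Phi}^+(\lambda)^{-1} \to (\E^{0, \bm{\pi}^+}[H_1^+])^{-1} \mathbf{1}\bm{\pi}^+$ and hence
\begin{equation*}
\lim_{\lambda \downarrow 0} \lambda\, \mu \mathcal{U}_\lambda f \;=\; \frac{\bm{\pi}^+\, \bm{\psi}(f, 0)}{\E^{0, \bm{\pi}^+}[H_1^+]}.
\end{equation*}
A Fubini calculation, using $\int_0^\infty\!\int_0^x f(u, i)\, \d u\, \Pi^+_i(\d x) = \int_0^\infty f(u, i)\, \overbar{\Pi}^+_i(u)\, \d u$ and the analogue for the laws $F^+_{i,j}$ of $\Delta^+_{i,j}$, then identifies $\bm{\pi}^+ \bm{\psi}(f, 0) = \chi(f)$ after relabeling the double sum. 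In the general case, one replaces the normalisation $\lambda$ by $\alpha(\lambda)$ to obtain the unconditional limit $\alpha(\lambda)\, \mu \mathcal{U}_\lambda f \to \chi(f)$, and Proposition~\ref{invariant measure resolvent} promotes $\chi$ (up to a scalar multiple) to an invariant measure of $(\cO, \cJ)$.

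Essential uniqueness is then immediate from the Harris recurrence established in Proposition~\ref{prop: harris} via the classical Meyn--Tweedie theory of invariant measures for Harris processes. Finally, the total mass computation
\begin{equation*}
\chi(\R_+ \times \Theta) \;=\; \sum_{i \in \Theta} \pi^+(i)\left(d^+_i + \int_0^\infty x\, \Pi^+_i(\d x) + \sum_{j \neq i} q^+_{i,j}\, \E[\Delta^+_{i,j}]\right) \;=\; \E^{0, \bm{\pi}^+}[H_1^+],
\end{equation*}
whose second equality is precisely $\bm{\pi}^+ \bm{\Phi}^{+\prime}(0) \mathbf{1}$, shows that $\chi$ can be renormalised to a probability measure exactly when $\E^{0, \bm{\pi}^+}[H_1^+] < \infty$, giving the claimed equivalence. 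The main technical obstacle is justifying the spectral expansion of $\bm{\Phi}^+(\lambda)^{-1}$ uniformly near $\lambda = 0$ and interchanging $\lim_{\lambda \downarrow 0}$ with the integrals hidden inside $\bm{\psi}(f, \lambda)$; this should be handled by first restricting to $f \in \cC_0(\R_+ \times \Theta)$ with dominated convergence and extending to $\cB_b(\R_+ \times \Theta) \cup \cB_+(\R_+ \times \Theta)$ via monotone-class arguments.
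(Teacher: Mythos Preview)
Your approach is correct and reaches the same conclusion, but the paper takes a more elementary route that sidesteps spectral perturbation entirely. Rather than analysing $\lambda\,\bm{\pi}^+\bm{\Phi}^+(\lambda)^{-1}$ via a Perron--Frobenius expansion, the paper chooses the scaling family in Proposition~\ref{invariant measure resolvent} so that the row vector hitting $(\mathcal{U}_\lambda f(0,i))_i^\top$ is essentially $\bm{\pi}^+\bm{\Phi}^+(\lambda)$ itself; the inverse then cancels \emph{algebraically} via
\[
\bm{\pi}^+\bm{\Phi}^+(\lambda)\cdot\bm{\Phi}^+(\lambda)^{-1}\bm{\psi}(f,\lambda)=\bm{\pi}^+\bm{\psi}(f,\lambda),
\]
and only monotone convergence on $\bm{\psi}(f,\lambda)$ as $\lambda\downarrow 0$ is required. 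This treats the finite- and infinite-mean cases uniformly, with no eigenvalue asymptotics or case distinction. Your route is more conceptual in that it identifies the rank-one singularity $\mathbf{1}\bm{\pi}^+$ of $\bm{\Phi}^+(0)=-\bm{Q}^+$ as the mechanism, but it costs you the spectral machinery you flag at the end.

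One small correction to your expansion: without differentiability of $\bm{\Phi}^+$ at $0$ (which may fail in the infinite-mean case) you only get $\bm{\Phi}^+(\lambda)^{-1}=\alpha(\lambda)^{-1}v(\lambda)u(\lambda)^\top+\bm R(\lambda)$ with the $\lambda$-dependent Perron eigenvectors and bounded $\bm R(\lambda)$; replacing $v(\lambda)u(\lambda)^\top$ by $\mathbf 1\bm{\pi}^+$ need not leave the remainder bounded. This does not damage your argument, since after multiplying by $\alpha(\lambda)$ the continuity $v(\lambda)\to\mathbf 1$, $u(\lambda)\to\bm{\pi}^+$ is all you use.
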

\begin{proof}
Define  $\bm{\alpha}(\lambda) \coloneq \bm{\pi}^+ \cdot \bm{\Phi}^+(\lambda)$ and
$$\alpha_\lambda \coloneq \sum_{i=1}^n \alpha_i(\lambda)\, \delta_{\{0\} \times \{i\}}.$$
Then, $\alpha_\lambda$ is a positive measure since $\Phi^+_i(\lambda) \geq 0$ and $G^+_{i,j}(\lambda) \in [0,1]$ for any $i,j = 1,\ldots,n$ imply that for any $\lambda > 0$ and $i = 1, \ldots,n$,
$$\alpha_i(\lambda) = \sum_{j=1}^n \pi^+(j)\bm{\Phi}^+(\lambda)_{j,i} \geq -\sum_{j=1}^n \pi^+(j) q^+_{j,i} = 0.$$
Since $\lim_{\lambda \downarrow 0} \bm{\Phi}^+(\lambda) = \bm{Q}^+$ and $\bm{\pi}^+$ is the stationary distribution of $J^+$ we have
$$\lim_{\lambda \downarrow 0} \bm{\pi}^+ \cdot \bm{\Phi}^+(\lambda) = \bm{\pi}^+ \cdot \bm{Q}^+ = \bm{0}_{1 \times n},$$
such that $\lim_{\lambda \downarrow 0} \alpha_\lambda(\R_+ \times [n]) = 0$ follows.
Recall from Appendix \ref{sec: markov stability} the notation $\mathcal{U}_\lambda^{\alpha_\lambda}(\diff{x}) \coloneqq \int_{\R_+ \times [n]} \mathcal{U}_\lambda(y, \diff{x})\, \alpha_\lambda(\diff{y})$. Plugging into the resolvent formula from Theorem \ref{resolvent overshoot} yields for any $f \in \cB_b(\R_+ \times [n]) \cap \cB_+(\R_+ \times [n])$ that
\begin{equation} \label{eq:scale resolvent}
\begin{split}
\mathcal{U}_\lambda^{\alpha_\lambda}(f) &= \bm{\alpha}(\lambda) \cdot (\mathcal{U}_\lambda f(0,i))_{i =1,\ldots,n}^\top\\
&= \bm{\pi}^+ \cdot \left(d_i^+ f(0,i) + \int_0^\infty Q_\lambda f(y,i)\, \Pi_i^+(\diff{y}) + \sum_{j \neq i} q^+_{i,j} Q_\lambda f(y,j)\, F^+_{i,j}(\diff{y}) \right)^\top_{i=1,\ldots n}.
\end{split}
\end{equation}
By monotone convergence and an integration by parts it follows that for any measure $\mu$ on $\R_+$
\begin{align*}
\lim_{\lambda \downarrow 0} \int_0^\infty Q_\lambda f(y,i) \, \mu(\diff{y}) &= \int_0^\infty \int_0^y f(y-t,i) \diff{t} \,\mu(\diff{y})\\
&= \int_0^\infty \int_0^y f(t,i) \diff{t} \, \mu(\diff{y})\\
&= \int_0^\infty \overbar{\mu}(y) f(y,i) \diff{y},
\end{align*}
where $\overbar{\mu}(y) \coloneq \mu(y,\infty)$. Thus, we obtain from \eqref{eq:scale resolvent} that
\begin{align*}
\lim_{\lambda \downarrow 0} \mathcal{U}_\lambda^{\alpha_\lambda}(f) &= \bm{\pi}^+ \cdot \left(d_i^+ f(0,i) + \int_0^\infty  f(y,i) \overbar{\Pi}{}^+_i(y) \diff{y} + \sum_{j \neq i} q^+_{i,j} f(y,j) \overbar{F}{}^+_{i,j}(y) \diff{y}\right)_{i=1,\ldots,n}^\top\\
&= \sum_{i = 1}^n \pi^+(i) \left(d_i^+ f(0,i) + \int_0^\infty f(y,i) \overbar{\Pi}{}^+_i(y) \diff{y} + \sum_{j \neq i}q^+_{i,j} \int_0^\infty f(y,i) \overbar{F}{}^+_{i,j}(y)\diff{y}\right)\\
&= \sum_{i = 1}^n \left(\pi^+(i) \left(d_i^+ f(0,i) + \int_0^\infty f(y,i) \overbar{\Pi}{}^+_i(y) \diff{y}\right) + \sum_{j \neq i} \pi^+(j)q^+_{j,i} \int_0^\infty f(y,i) \overbar{F}{}^+_{j,i}(y)\diff{y}\right)\\
&= \int_{\R_+ \times [n]} f(y,z) \, \chi(\diff{y} \times \diff{z}),
\end{align*}
where for the second to last equality we used that
\begin{align*}
  \sum_{i=1}^n \pi^+(i) \sum_{j \neq i} q^+_{i,j}\int_0^\infty f(y,j)\overbar{F}{}^+_{i,j}  \diff{y} &= \sum_{j=1}^n \sum_{i \neq j} q^+_{i,j}\pi^+(i) \int_0^\infty f(y,j) \overbar{F}{}^+_{i,j}(y) \diff{y}\\
  &= \sum_{i=1}^n \sum_{j \neq i} q^+_{j,i}\pi^+(j) \int_0^\infty f(y,i) \overbar{F}{}^+_{j,i}(y) \diff{y}.
\end{align*}
From Proposition \ref{invariant measure resolvent} it now follows that $\chi$ is indeed an invariant measure for $(\cO,\cJ)$. By irreducibility of $J^+$, $(\cO,\cJ)$ is a Harris recurrent Feller process according to Propositions \ref{overshoot feller} and \ref{prop: harris} and hence Theorem 2.5 in \cite{AzemaDufloRevuz1969} yields that $\chi$ is essentially unique.

Finally, using the Laplace exponent of $(H^+,J^+)$ we obtain
\begin{align*}
\big(\E^{0,i}[H^+_1 \one_{\{J^+_1 = j\}}] \big)_{i,j=1,\ldots,n} &=  \frac{\uppartial}{\uppartial \lambda} \bm{\Phi}^+(\lambda)\big\vert_{\lambda = 0} \\
&= \mathrm{diag}\big(\big(\E[H_1^{+,(i)}] \big)\big)_{i\in [n]} + \bm{Q}^+ \odot \big(\E[\Delta^+_{i,j}] \big)_{i,j=1,\ldots,n}\\
&= \mathrm{diag}\Big(\Big(d_i^+ + \int_0^\infty \overbar{\Pi}{}^+_i(x) \diff{x} \Big)\Big)_{i\in [n]} + \bm{Q}^+ \odot \Big(\int_0^\infty \overbar{F}{}^+_{i,j}(x) \diff{x} \Big)_{i,j=1,\ldots,n},
\end{align*}
and hence
$$\E^{0,i}\big[H_1^+\big] = d_i^+ + \int_0^\infty \overbar{\Pi}{}^+_i(x) \diff{x} + \sum_{j \neq i} q^+_{i,j} \int_0^\infty \overbar{F}{}^+_{i,j}(x) \diff{x},  \quad i \in [n],$$
which shows that
$$\chi(\R_+ \times [n]) = \E^{0,\bm{\pi}^+}\big[H_1^+\big].$$
Thus, $\chi$ can be normalized to an invariant distribution if and only if $\E^{0,\bm{\pi}^+}[H_1^+] < \infty$.
\end{proof}
\begin{remark}
The finite mean condition for the ascending ladder height process is exactly the same condition, which is necessary and sufficient for stationary overshoots of MAPs in the sense of weak convergence. As shown in Theorem 35 of \cite{dereich2017} as an extension of Theorem 8 in \cite{DoneyMaller2002} for L\'evy processes, this condition is equivalent to $\E^{0,i}[\lvert \xi_1 \rvert] < \infty$ and either $\lim_{t \to \infty} \xi_t = \infty$, $\PP^{0,i}$-a.s., or $\limsup_{t \to \infty} \xi_t = -\liminf_{t \to \infty} \xi_t = \infty$, $\PP^{0,i}$-a.s., together with
\begin{equation}\label{eq: ladder height finite mean}
\int_\kappa^\infty \frac{x \sum_{i=1}^n\bm{\Pi}(i,[x,\infty) \times [n])}{1 + \int_0^x \int_y^\infty  \sum_{i=1}^n\bm{\Pi}(i,(-\infty,-z] \times [n]) \diff{z}\diff{y}} \diff{x} < \infty,
\end{equation}
for some $\kappa > 0$.
\end{remark}

Classical results on the interplay between Harris recurrence and invariant measures for Markov process (cf.\ Appendix \ref{sec: markov stability}) now also yields that $\chi$ is a maximal Harris meaure.
\begin{corollary}\label{coroll: maximal harris}
The invariant measure $\chi$ given in \eqref{eq: invariant measure overshoots} is a maximal Harris measure.
\end{corollary}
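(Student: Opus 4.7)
The plan is to treat this as an essentially immediate consequence of the combination of two results already established: Proposition \ref{prop: harris}, which shows that the overshoot process $(\cO,\cJ)$ is Harris recurrent, and Theorem \ref{theo: invariant}, which identifies $\chi$ as the (essentially unique) invariant $\sigma$-finite measure. Under these conditions, the general Markov stability framework recalled in Appendix \ref{sec: markov stability} asserts that the essentially unique invariant measure of a Harris recurrent Markov process coincides with a maximal Harris measure, i.e.\ it dominates every irreducibility measure for the process. The corollary is therefore just the specialization of this general fact to $(\cO,\cJ)$.

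Concretely, I would proceed in three short steps. First, invoke Proposition \ref{prop: harris} to recall that $(\cO,\cJ)$ is Harris recurrent and hence in particular $\psi$-irreducible. Second, recall from Theorem \ref{theo: invariant} that $\chi$ given by \eqref{eq: invariant measure overshoots} is an invariant measure, which is moreover essentially unique by the Harris property combined with \cite{AzemaDufloRevuz1969} as already used in the proof of Theorem \ref{theo: invariant}. Third, cite the corresponding statement in Appendix \ref{sec: markov stability} (which is precisely the discussion advertised by the authors as \emph{the interplay between Harris recurrence and invariant measures for Markov processes}) to conclude that this essentially unique invariant measure is maximal among Harris/irreducibility measures.

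There is really no hard step here; the only thing to be careful about is matching the terminology of the appendix with that used in the classical references, so that the equivalence between the essentially unique invariant $\sigma$-finite measure and the maximal Harris measure is applied correctly. In particular, one should verify $\sigma$-finiteness of $\chi$ from \eqref{eq: invariant measure overshoots}, which is immediate because $\Theta$ is finite and each measure $\chi(\cdot, \{i\})$ decomposes into an atom at $0$ plus an absolutely continuous part whose density $y\mapsto \pi^+(i)\overbar{\Pi}{}^+_i(y) + \sum_{j\neq i} \pi^+(j) q^+_{j,i}\overbar{F}{}^+_{j,i}(y)$ integrates to a finite quantity on every bounded interval. With $\sigma$-finiteness in place, the appendix result applies verbatim and the corollary follows without further work.
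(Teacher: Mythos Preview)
Your proposal is correct and matches the paper's own approach: the corollary is treated as an immediate consequence of Proposition \ref{prop: harris} (Harris recurrence) and Theorem \ref{theo: invariant} (essentially unique invariant measure), combined with the general fact from Appendix \ref{sec: markov stability} that for a Harris recurrent process the invariant measure is a Harris measure and, being a maximal irreducibility measure, dominates every Harris measure. Your additional explicit check of $\sigma$-finiteness is a harmless and sensible elaboration beyond what the paper spells out.
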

\begin{remark}
This could have also been shown directly by an alternative proof of Proposition \ref{prop: harris} based on Kaspi and Mandelbaum's characterization of Harris recurrence in terms of almost sure finiteness of first hitting times \eqref{eq: kaspi} and the characteristic property \eqref{eq: compensation map} of the L\'evy system belonging to $(H^+,J^+)$.
\end{remark}

Having established the existence of a unique invariant distribution, we now proceed to investigate ergodicity of overshoots. To this end, we need to find criteria ensuring the existence of an irreducible skeleton chain. One of these criteria will be a strictly positive creeping probability of the MAP and we lift a sufficient criterion for this to happen from the well-known L\'evy process situation.

\begin{lemma} \label{lemma: creeping}
Suppose that $d_i^+ > 0$ for some $i \in [n]$. Then, for any $t > 0$ we have
  $$\PP^{0,i}\big(\xi_{T_t} = t, J_{T_t} = i \big) > 0.$$
\end{lemma}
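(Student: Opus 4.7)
The plan is to exploit the indistinguishability of $(\cO,\cJ)$ and $(\cO^+,\cJ^+)$ to translate the creeping event for $(\xi,J)$ into a creeping event for the ascending ladder height MAP $(H^+,J^+)$, and then isolate the L\'evy subordinator $H^{+,(i)}$ inside $H^+$ to apply the classical creeping result for subordinators.

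First, I would note that by upward regularity and the indistinguishability of $(\cO,\cJ)$ and $(\cO^+,\cJ^+)$, together with the fact that $T^+$ is the right-continuous inverse of the strictly increasing process $H^+$ and $\xi_{T_t} - t = \cO_t$, $J_{T_t} = \cJ_t$ on $\{T_t < \infty\}$, the event in question satisfies
\[
\PP^{0,i}\big(\xi_{T_t} = t, J_{T_t} = i\big) = \PP^{0,i}\big(H^+_{T^+_t} = t, J^+_{T^+_t} = i\big).
\]
Next, let $\sigma^+_1$ be the first jump time of $J^+$. Under $\PP^{0,i}$, $\sigma^+_1$ is exponentially distributed with parameter $-q^+_{i,i}$ and, by Proposition \ref{char map levy} applied to the MAP subordinator $(H^+,J^+)$, it is independent of the L\'evy subordinator $H^{+,(i)}$ corresponding to phase $i$. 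Moreover, on $\{t < \sigma^+_1\}$ one has $H^+_t = H^{+,(i)}_t$ almost surely, so that on $\{T^{+,(i)}_t < \sigma^+_1\}$, where $T^{+,(i)}_t \coloneqq \inf\{s \geq 0 : H^{+,(i)}_s > t\}$, we have $T^+_t = T^{+,(i)}_t$ and $J^+_{T^+_t} = i$. Hence
\[
\PP^{0,i}\big(H^+_{T^+_t} = t, J^+_{T^+_t} = i\big) \geq \PP^{0,i}\big(H^{+,(i)}_{T^{+,(i)}_t} = t,\ T^{+,(i)}_t < \sigma^+_1\big) = \E\bigl[\one_{\{H^{+,(i)}_{T^{+,(i)}_t} = t\}}\, \mathrm{e}^{q^+_{i,i} T^{+,(i)}_t}\bigr],
\]
using independence of $\sigma^+_1$ and $H^{+,(i)}$ in the last step.

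Finally, I would invoke the classical creeping result for L\'evy subordinators (see, e.g., \cite{kyprianou2014}): since $H^{+,(i)}$ is a (possibly killed) L\'evy subordinator with strictly positive drift $d_i^+ > 0$, the creeping probability $\PP(H^{+,(i)}_{T^{+,(i)}_t} = t)$ equals $d_i^+ \, u_i^+(t)$ for a continuous, strictly positive version $u_i^+$ of the potential density of $H^{+,(i)}$, which is therefore strictly positive. On the event $\{H^{+,(i)}_{T^{+,(i)}_t} = t\}$, $T^{+,(i)}_t$ is finite, so $\mathrm{e}^{q^+_{i,i} T^{+,(i)}_t} > 0$ pointwise, whence the expectation on the right-hand side above is strictly positive. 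This yields the claim.

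The only potentially delicate point is making sure the independence structure between $\sigma^+_1$ and $H^{+,(i)}$ granted by Proposition \ref{char map levy} is applied correctly, and that the classical creeping formula is available for the possibly killed subordinator $H^{+,(i)}$; both are standard, so this should be a short argument.
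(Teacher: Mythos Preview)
Your proposal is correct and follows essentially the same approach as the paper: both proofs reduce to the ascending ladder height MAP, restrict to the event that the first passage occurs before the first jump time $\sigma^+_1$ of $J^+$, and then invoke the classical creeping result for the L\'evy subordinator $H^{+,(i)}$ with positive drift (Theorem~5.9 in \cite{kyprianou2014}). The only cosmetic difference is that the paper separates the case $q^+_{i,i}=0$ and, for $-q^+_{i,i}>0$, writes the bound as $-q^+_{i,i}\int_0^\infty \mathrm{e}^{q^+_{i,i}y}\,\PP\big(H^{+,(i)}_{T^{+,(i)}_t}=t,\,T^{+,(i)}_t<y\big)\,\diff y$ and argues the integrand is eventually positive, whereas you compress both cases into the single expression $\E\big[\one_{\{H^{+,(i)}_{T^{+,(i)}_t}=t\}}\,\mathrm{e}^{q^+_{i,i}T^{+,(i)}_t}\big]$, which is marginally cleaner.
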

\begin{proof}
Let $\sigma^+_1$ be the first jump time of $J^+$. If $q^+_{i,i} = 0$, then under $\PP^{0,i}$, $H^+$ is a L\'evy subordinator with positive drift and therefore has positive creeping probability by Theorem 5.9 in \cite{kyprianou2014}, implying the claim. Suppose now $-q^+_{i,i} > 0$. Then, using the representation from Proposition \ref{char map levy} we have
\begin{align*}
  \PP^{0,i}(\cO_t = 0, \cJ_t = i) &= \PP^{0,i}(\cO^+_t = 0, \cJ^+_t = i)\\
  &\geq \PP^{0,i}\Big(H^{+,0,i}_{T^{+,0,i}_t} = t, T^{+,0,i}_t < \sigma^+_1\Big)\\
  &= \int_0^\infty \PP\Big(H^{+,(i)}_{T^{+,(i)}_t} = t, T^{+,(i)}_t < y\Big)\, \PP^{0,i}(\sigma_{1}^+ \in \diff{y})\\
  &= -q^+_{i,i} \int_0^\infty \mathrm{e}^{q^+_{i,i}y} \PP\Big(H^{+,(i)}_{T^{+,(i)}_t} = t, T^{+,(i)}_t < y\Big) \diff{y},
\end{align*}
where we used independence of $H^{+,0,i}$ and $J^+$ for the third equality. Since again by Theorem 5.9 in \cite{kyprianou2014}, $d^+_i > 0$ gives that $\PP(H^{+,(i)}_{T^{+,(i)}_t} = t) > 0$ for all $t \geq 0$ and
$$\lim_{y \to \infty }\PP\Big(H^{+,(i)}_{T^{+,(i)}_t} = t, T^{+,(i)}_t < y\Big) = \PP\Big(H^{+,(i)}_{T^{+,(i)}_t} = t\Big),$$
it follows that there is $z > 0$ such that $\PP\big(H^{+,(i)}_{T^{+,(i)}_t} = t, T^{+,(i)}_t < y\big) > 0$ for all $y \geq z$ and hence, from above it follows that
$$\PP^{0,i}(\cO_t = 0, \cJ_t = i) \geq -q^+_{i,i} \int_z^\infty \mathrm{e}^{q^+_{i,i}y} \PP\Big(H^{+,(i)}_{T^{+,(i)}_t} = t, T^{+,(i)}_t < y\Big) \diff{y} > 0.$$
\end{proof}
\begin{remark}
The irreducibility assumption \ref{ass: irreducible} is not required for this statement.
\end{remark}
Let us now state properties of the ascending ladder height process that imply existence of an irreducible skeleton of $(\cO,\cJ)$.

\begin{proposition} \label{prop: aperiod}
If
\begin{enumerate}[label=(\roman*),ref=(\roman*)]
  \item $d_i^+ > 0$ for some $i \in [n]$, then $(\cO,\cJ)$ is aperiodic and any $\Delta$-skeleton is  irreducible.
  \item for some $j \in [n]$ it holds $\mathrm{Leb}\vert_{(0,\infty)} \ll \Pi^+_j\vert_{(0,\infty)}$, then any $\Delta$-skeleton $(\cO^\Delta,\cJ^\Delta)$ is $\mathrm{Leb}_+ \otimes \delta_j$-irreducible. \label{prop: aperiod2}
  \item for some $j \in [n]$ there exists an interval $(a,b) \subset \R_+$ such that $\mathrm{Leb}\vert_{(a,b)} \ll \Pi^+_j\vert_{(a,b)}$ and for any $i \in [n]$ and $x> 0$ it holds that $U^+_{i,j}([0,x)) > 0$, then for any $\Delta \in (0, (a+b)\slash 2)$, the $\Delta$-skeleton $(\cO^\Delta,\cJ^\Delta)$ is $\mathrm{Leb}_+(\cdot \cap (a,(a+b)\slash2))\otimes \delta_j$-irreducible. \label{prop: aperiod3} 
  \item for some $(j,k) \in [n]^2$ with $k \neq j$ it holds $\mathrm{Leb}\vert_{(0,\infty)} \ll F^+_{k,j}\vert_{(0,\infty)}$ and $q^+_{k,j}> 0$, then any $\Delta$-skeleton $(\cO^\Delta,\cJ^\Delta)$ is $\mathrm{Leb}_+ \otimes \delta_j$-irreducible. \label{prop: aperiod4}
  \item for some $(j,k) \in [n]^2$ with $k \neq j$ it holds $q^+_{k,j} > 0$, there exists an interval $(a,b) \subset \R_+$ such that $\mathrm{Leb}\vert_{(a,b)} \ll F^+_{k,j}\vert_{(a,b)}$ and for any $i \in [n]$ and $x> 0$ it holds that $U^+_{i,k}([0,x)) > 0$, then for any $\Delta \in (0, (a+b)\slash 2)$, the $\Delta$-skeleton $(\cO^\Delta,\cJ^\Delta)$ is $\mathrm{Leb}_+(\cdot \cap (a,(a+b)\slash2))\otimes \delta_j$-irreducible. \label{prop: aperiod5}
\end{enumerate}
\end{proposition}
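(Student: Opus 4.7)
The common strategy for all five parts is to apply the overshoot law \eqref{eq: overshoot law} at a skeleton time $n\Delta$ and show that its right-hand side charges the target set with positive probability. For parts (ii) and (iv) I would fix $(x,i) \in \R_+ \times \Theta$ and $A \in \cB(\R_+)$ with $\mathrm{Leb}(A \cap (0,\infty)) > 0$ and choose $n$ so that $n\Delta > x$. The overshoot law then delivers
\[
\PP^{x,i}(\cO_{n\Delta} \in A, \cJ_{n\Delta} = j) \geq \int_{[0, n\Delta - x)} \Pi^+_j(u + A)\, U^+_{i,j}(n\Delta - x - du)
\]
in case (ii), and the analogous bound using the $q^+_{k,j} F^+_{k,j}$ summand in case (iv). The absolute continuity hypotheses immediately give $\Pi^+_j(u + A) > 0$ (respectively $F^+_{k,j}(u + A) > 0$) for every $u \geq 0$; irreducibility of $J^+$ (Assumption \ref{ass: irreducible}) then guarantees $U^+_{i,j}([0, n\Delta - x)) > 0$ (respectively $U^+_{i,k}$) for $n$ sufficiently large, making the right-hand side positive.

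Parts (iii) and (v) require more care, as the density lives only on the bounded interval $(a,b)$. Here the integrand is positive only for shifts $u$ with $u + A \subseteq (a,b)$, i.e.\ $u \in [0, (b-a)/2)$. After the change of variables $s = n\Delta - x - u$, the productive range for $s$ lies in $(\max(0, n\Delta - x - (b-a)/2), n\Delta - x]$, and the plan is to pick $n$ so that this interval is a small neighbourhood of $0$ in $(0,\infty)$, where the second hypothesis provides the needed potential mass of $U^+_{i,j}$. When $x$ is too large for a one-step overshoot identity to suffice, I would first propagate through a preparatory skeleton segment, using the sawtooth identity \eqref{eq: sawtooth} to reset the residual to an appropriate value; the bound $\Delta < (a+b)/2$ is precisely what makes the time-matching feasible uniformly in $x$.

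Finally, part (i) turns on the creeping Lemma \ref{lemma: creeping}, which delivers $\PP^{0,i}(\cO_t = 0, \cJ_t = i) > 0$ for every $t > 0$. This immediately yields aperiodicity: the $\Delta$-skeleton returns to $\{(0,i)\}$ from $(0,i)$ with positive probability in every $n \in \N$ steps, so the greatest common divisor of return times equals $1$. Skeleton irreducibility starting from an arbitrary $(x, \ell)$ I would assemble by concatenating three ingredients: the deterministic sawtooth transit $(x,\ell) \to (0,\ell)$ at time $x$, a positive-probability phase transition via irreducibility of $J^+$ taking the ladder modulator from $\ell$ to $i$, and a creeping return landing on $(0,i)$ at the designated skeleton time. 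I expect the main technical work to lie in parts (iii) and (v), where the $U^+_{i,j}$-charged neighbourhood of $0$, the admissible shift range $[0,(b-a)/2)$, and integer multiples of $\Delta$ must all be aligned simultaneously, uniformly over starting positions.
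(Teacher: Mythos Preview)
Your strategy matches the paper's closely: parts (ii) and (iv) are handled exactly as you describe, using the overshoot formula \eqref{eq: overshoot law} together with translation invariance of Lebesgue measure and irreducibility of $J^+$ to ensure $U^+_{i,j}([0,n\Delta-x))>0$ for $n$ large.

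Two places where you diverge slightly from the paper are worth flagging. For (iii) and (v), the ``preparatory skeleton segment'' is unnecessary: the paper simply observes that since $\Delta < (b-a)/2$, for \emph{every} starting level $x$ one can choose $k\in\N$ with $k\Delta\in(x,x+(b-a)/2)$. Then $t-x-y\in(0,(b-a)/2)$ for all $y\in[0,t-x)$, so the shifted set $B_1+(t-x-y)$ retains positive intersection with $(a,b)$, and the hypothesis $U^+_{i,j}([0,t-x))>0$ finishes the argument in one step. Your change-of-variable analysis arrives at the same constraint $n\Delta\in(x,x+(b-a)/2)$, so the ``large $x$'' case you anticipate never materialises.

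For (i), the paper takes a more abstract route: it notes that $C=\{(0,i)\}$ is trivially small and, since the invariant measure $\chi$ charges it (because $d_i^+>0$), lies in $\cB^+(\R_+\times\Theta)$; the creeping lemma then gives aperiodicity directly from the definition, and Lemma~\ref{lemma: aperiod} (the converse direction, for singleton defining sets) delivers irreducibility of every $\Delta$-skeleton. Your three-step concatenation argument is in effect an inlined proof of that converse direction of Lemma~\ref{lemma: aperiod}, so it also works, but you should note explicitly that $\{(0,i)\}$ is accessible for the continuous-time process (which comes from $\chi(\{(0,i)\})=\pi^+(i)d_i^+>0$) to guarantee the existence of the time $s$ at which $(0,i)$ is hit with positive probability before invoking creeping.
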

\begin{proof}\quad
\begin{enumerate}[label=(\roman*),ref=(\roman*)]
\item The singleton set $C = \{0\} \times \{i\}$ is trivially small (just choose $\nu_a = \mathcal P_t((0,i),\cdot)$ for $a =\delta_t$ and some $t > 0$.). Further, $C \in \mathcal{B}^+(\R_+ \times [n])$ since Corollary \ref{coroll: maximal harris}  tells us that the invariant measure $\chi$ is an irreducibility measure for $(\cO,\cJ)$ and thanks to $d_i^+>0$, we have $\chi(C) > 0$. Lemma \ref{lemma: creeping} gives that
$$\PP^{0,i}((\cO_t,\cJ_t) \in C) = \PP^{0,i}(\cO_t = 0,\cJ_t = i) > 0$$
for all $t \geq 0$, which implies that $(\cO,\cJ)$ is aperiodic with defining singleton set $C = \{0\} \times \{i\}$, which by Lemma \ref{lemma: aperiod} also implies that any $\Delta$-skeleton is irreducible.
\item Let $B = B_1 \times B_2 \in \cB(\R_+ \times [n])$ such that $\mathrm{Leb}_+ \otimes \delta_j(B) > 0$. Without loss of generality we may assume that $0 \notin B_1$. Since $J^+$ is irreducible it holds $\PP^{0,i}(J^+_t = j) > 0$ for any $t > 0$ and $i \in [n]$ and hence by monotone convergence,
$$\lim_{x \to \infty} U^+_{i,j}([0,x)) = \int_0^\infty \PP^{0,i}(J^+_t = j) \diff{t} > 0,$$
which yields that there exists $\overbar{x} > 0$ such that $U^+_{i,j}([0,x)) > 0$ for all $x \geq \overbar{x}$ and $i \in [n]$. For given $x \geq 0$ let $t > x + \overbar{x}$. Then, by the overshoot formula and Fubini it follows that for any $i \in [n]$ we have
\begin{equation}\label{eq: aperiod1}
  \begin{split}
\PP^{x,i}(\cO_t \in B_1, \cJ_t \in B_2) &\geq \int_{[0,t-x)} \int_{B_1}  \, \Pi^+_j(y+\diff{u}) \, U^+_{i,j}(t - x - \diff{y})\\
 &= \int_{[0,t-x)} \, \Pi^+_j(B_1 + t-x-y) \, U^+_{i,j}(\diff{y}).
\end{split}
\end{equation}
Since by translation invariance of the Lebesgue measure it holds $\mathrm{Leb}(B_1+z) > 0$ for any $z \geq 0$ and $\mathrm{Leb}\vert_{(0,\infty)} \ll \Pi^+_j\vert_{(0,\infty)}$ by assumption, it follows that for any $y \in [0,t-x)$ we have $\Pi^+_j(B_1 + t - x - y)  > 0$. By our choice of $t$ it also holds that $U^+_{i,j}([0,t-x)) > 0$, thus \eqref{eq: aperiod1} yields that $\PP^{x,i}(\cO_t \in B_1, \cJ_t \in B_2) > 0$. Hence, given $\Delta > 0$, choosing $n_x \in \N$ large enough such that $n_x\Delta > x + \overbar{x}$, it follows that $\PP^{x,i}((\cO_{n_x\Delta},\cJ_{n_x\Delta}) \in B) > 0$ for any $i \in [n]$, which shows that any $\Delta$-skeleton is $\mathrm{Leb}_+ \otimes \delta_j$-irreducible.
\item Choose $B = B_1 \times B_2 \in \cB(\R_+ \times [n])$ such that $\mathrm{Leb}_+(\cdot \cap (a,b)) \otimes \delta_j(B) > 0$. Again we may assume that $0 \notin B_1$. Let $(x,i) \in \R_+ \times [n]$ and $t \in (x, x +(b-a)\slash2).$ Since for any $z \geq 0$ it holds that
$$(B_1+z) \cap (a,b) = (B_1 \cap (a-z, b-z)) + z$$
it follows for $z \in (0, (b-a)\slash 2)$ by translation invariance of the Lebesgue measure that
$$\mathrm{Leb}((B_1+z)\cap (a,b)) = \mathrm{Leb}(B_1 \cap (a-z, b-z)) \geq \mathrm{Leb}(B_1 \cap (a, (a+b)\slash 2)) > 0.$$
By our choice of $t \in (x, x+ (b-a)\slash 2)$ it holds that $0 < t - x- y < (b-a)\slash 2$ for all $y \in (0,t-x)$ and therefore $\mathrm{Leb}((B_1 + t - x -y)\cap (a,b)) > 0$, which by our assumption $\mathrm{Leb} \vert_{(a,b)} \ll \Pi^+_j\vert_{(a,b)}$ implies that $\Pi^+_j(B_1 + t - x -y) > 0$. Since $U^+_{i,j}([0,t-x)) > 0$ by assumption it now follows from \eqref{eq: aperiod1} that $\PP^{x,i}((\mathcal{O}_t, \cJ_t) \in B) > 0$. Hence, given $\Delta \in (0,(b-a)\slash 2)$, if we choose $k \in \N$ such that $k\Delta \in (x,x+(b-a)\slash2)$ it follows that $\PP^{x,i}((\cO_{k \Delta}, \cJ_{k \Delta}) \in B) > 0$ and therefore $\sum_{k=1}^\infty \PP^{x,i}((\cO_{k \Delta}, \cJ_{k\Delta}) \in B) > 0$. Since $(x,i) \in \R_+ \times [n]$ was chosen arbitrarily we conclude that the $\Delta$-skeleton is irreducible with irreducibility measure $\mathrm{Leb}_+(\cdot \cap (a,(a+b)\slash 2)) \otimes \delta_j$.
\end{enumerate}
Parts \ref{prop: aperiod4} and \ref{prop: aperiod5} can be demonstrated exactly as parts \ref{prop: aperiod2} and \ref{prop: aperiod3} when instead of \eqref{eq: aperiod1} we use that for $B = B_1 \times B_2 \in \mathcal{B}(\R_+ \times [n])$ with $j \in B_2$, $(x,i) \in \R_+ \times [n]$ and $t > x$ it holds
$$\PP^{x,i}(\cO_t \in B_1, \cJ_t \in B_2) \geq q^+_{k,j} \int_{[0,t-x)} \, F^+_{k,j}(B_1 + t-x-y) \, U^+_{i,k}(\diff{y}).$$
\end{proof}
\begin{remark}
The condition in part \ref{prop: aperiod3} and \ref{prop: aperiod5} that $U^+_{i,j}([0,x)) > 0$ for all $i \neq j$ is non-redundant in general. If, e.g., $F^+_{i,j}([0,x)) = 0$ for some $i \neq j$, then $U^+_{i,j}([0,x)) = 0$.
\end{remark}

These conditions in combination with Harris recurrence now allow us to determine when $(\cO,\cJ)$ is ergodic.

\begin{theorem} \label{theo: ergodic}
Suppose that $\E^{0,\bm{\pi}^+}[H_1^+] < \infty$. Then, under any of the conditions of Proposition \ref{prop: aperiod}, it holds that $(\cO,\cJ)$ is ergodic, i.e.\
$$\forall (x,i) \in \R_+ \times [n]: \quad \lim_{t \to \infty} \lVert \PP^{x,i}((\cO_t,\cJ_t) \in \cdot) - \rho\rVert_{\mathrm{TV}} = 0,$$
where for $(x,i) \in \R_+ \times [n]$,
\begin{equation} \label{eq: stationary dist}
\rho(\diff{x}, \{i\}) \coloneq \frac{1}{\E^{0,\bm{\pi}^+}[H_1^+]} \Big(\pi^+(i) d^+_i \delta_0(\diff{y}) + \one_{(0,\infty)}(y)\Big(\pi^+(i)\overbar{\Pi}{}^+_i(y) + \sum_{j \neq i} \pi^+(j)q^+_{j,i} \overbar{F}{}^+_{j,i}(y)\Big)\diff{y}\Big),
\end{equation}
is the stationary distribution of $(\cO,\cJ)$.
\end{theorem}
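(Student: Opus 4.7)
The plan is to assemble the pieces already developed in the paper (Harris recurrence, existence and uniqueness of an invariant measure, skeleton irreducibility) and invoke the Meyn--Tweedie convergence theorem for positive Harris recurrent processes, which is presumably recorded in Appendix~\ref{sec: markov stability}.

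First I would observe that under $\E^{0,\bm{\pi}^+}[H_1^+] < \infty$, Theorem~\ref{theo: invariant} says the essentially unique invariant measure $\chi$ from \eqref{eq: invariant measure overshoots} is finite, and so $\rho = \chi / \E^{0,\bm{\pi}^+}[H_1^+]$ is a genuine stationary distribution. Combined with the Harris recurrence shown in Proposition~\ref{prop: harris}, this makes $(\cO,\cJ)$ positive Harris recurrent. In particular, $\rho$ is a maximal irreducibility measure by Corollary~\ref{coroll: maximal harris} (suitably renormalized).

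Next I would use Proposition~\ref{prop: aperiod} to produce an irreducible skeleton. Under condition (i), the process is even aperiodic, so every $\Delta$-skeleton is $\rho$-irreducible; under conditions (ii)--(v), Proposition~\ref{prop: aperiod} provides, for appropriate $\Delta > 0$, a skeleton $(\cO^\Delta, \cJ^\Delta)$ that is $\nu$-irreducible for some nontrivial measure $\nu$ absolutely continuous with respect to $\rho$, hence $\rho$-irreducible.

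The final step is to apply the standard Meyn--Tweedie total variation convergence theorem (cf.\ Theorem~6.1 of \cite{MeynTweedie1993b}): a positive Harris recurrent right process whose transition semigroup admits an irreducible skeleton converges in total variation from every starting point to its stationary distribution. Since $(\cO,\cJ)$ is a Feller (hence Borel right) process by Proposition~\ref{overshoot feller}, is positive Harris recurrent with stationary law $\rho$, and possesses an irreducible $\Delta$-skeleton by Proposition~\ref{prop: aperiod}, this theorem applies and yields
\[
\lim_{t \to \infty} \lVert \PP^{x,i}((\cO_t,\cJ_t) \in \cdot) - \rho \rVert_{\mathrm{TV}} = 0 \quad \text{for every } (x,i) \in \R_+ \times \Theta.
\]

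The only mildly delicate point is the transfer from skeleton irreducibility to continuous-time total variation convergence: one must ensure that the skeleton is not merely irreducible but also positive recurrent and aperiodic on its full state space, which follows because the (unique up to scaling) invariant measure of the skeleton must coincide with $\rho$ restricted to its reachable set, and because Corollary~\ref{coro: petite} makes compact sets petite so that all the standard skeleton-to-continuous-time bridging lemmas apply. No further estimates are needed; this is the weakest of the three ergodicity statements in the section, with the rate theorems in Theorems~\ref{theo: exp ergodicity} and~\ref{theo: mixing} carrying the genuine analytical load.
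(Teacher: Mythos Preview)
Your proof is correct and follows essentially the same approach as the paper: assemble positive Harris recurrence (Proposition~\ref{prop: harris} and Theorem~\ref{theo: invariant}), the Borel right/Feller property (Proposition~\ref{overshoot feller}), and skeleton irreducibility (Proposition~\ref{prop: aperiod}), then invoke Theorem~6.1 of \cite{MeynTweedie1993}. The only remarks are that the correct citation is \cite{MeynTweedie1993} rather than \cite{MeynTweedie1993b}, and your final paragraph on the ``mildly delicate point'' is unnecessary --- the Meyn--Tweedie theorem applies directly once positive Harris recurrence and an irreducible skeleton are in hand, without any separate verification of skeleton aperiodicity or positive recurrence.
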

\begin{proof}
As a consequence of Proposition \ref{overshoot feller}, Proposition \ref{prop: harris} and Theorem \ref{theo: invariant}, it follows that under any of the conditions of Proposition \ref{prop: aperiod}, $(\cO,\cJ)$ is a positive Harris recurrent Borel right Markov process with unique stationary distribution given in \eqref{eq: stationary dist} such that some $\Delta$-skeleton is irreducible. Thus, Theorem 6.1 in \cite{MeynTweedie1993} yields the assertion.
\end{proof}

A direct implication of ergodicity is that a continuous time version of the von Neumann--Birkhoff ergodic theorem holds, see the discussion in \cite{sandric2017}.

\begin{corollary}
Given the assumptions from Theorem \ref{theo: ergodic}, it holds for any $f \in L^p(\R_+ \times [n],\rho)$ and $(x,i) \in \R_+ \times [n]$ that
$$\lim_{T \to \infty} \frac{1}{T} \int_0^T f(\cO_t,\cJ_t) \diff{t} = \rho(f), \quad \PP^{x,i}\text{-a.s. and in } L^p(\PP^\rho). $$
\end{corollary}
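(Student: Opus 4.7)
The plan is to identify this as a direct consequence of the continuous-time Birkhoff ergodic theorem, leveraging the Harris recurrence and total variation convergence already established. Combining Theorem~\ref{theo: invariant}, Proposition~\ref{prop: harris}, and Theorem~\ref{theo: ergodic}, $(\cO,\cJ)$ is a positive Harris recurrent Borel right Markov process whose one-dimensional marginal laws converge in total variation to the invariant probability measure $\rho$ for every starting state.

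The key structural step is to show that under $\PP^\rho$ the coordinate process is stationary and \emph{ergodic}, in the sense that the shift-invariant $\sigma$-algebra on the canonical path space is $\PP^\rho$-trivial. To see this, let $A$ be any shift-invariant event. The Markov property together with shift-invariance of $A$ gives $\PP^{x,i}(A) = \E^{x,i}[\PP^{\cO_t,\cJ_t}(A)]$ for every $t\ge 0$, and letting $t\to\infty$ while invoking total variation convergence from Theorem~\ref{theo: ergodic} identifies the right-hand side with $\int \PP^{y,j}(A)\,\rho(\diff y,\diff j) = \PP^\rho(A)$. Hence $h(x,i):=\PP^{x,i}(A)$ is constant in $(x,i)$. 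Another application of the Markov property then yields $\E^\rho[\one_A\vert\cG_t] = \PP^{\cO_t,\cJ_t}(A) = h(\cO_t,\cJ_t) = \PP^\rho(A)$ $\PP^\rho$-a.s., and martingale convergence forces $\one_A = \PP^\rho(A)$ $\PP^\rho$-a.s., whence $\PP^\rho(A)\in\{0,1\}$.

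Once ergodicity under $\PP^\rho$ is in place, the classical continuous-time Birkhoff--Khintchine theorem delivers both $\PP^\rho$-almost sure and $L^p(\PP^\rho)$ convergence of $T^{-1}\int_0^T f(\cO_t,\cJ_t)\,\diff t$ to $\rho(f)$ for every $f\in L^p(\rho)$ with $p\ge 1$, using that $L^p(\rho)\subset L^1(\rho)$ since $\rho$ is a probability measure. To lift the almost sure statement from the stationary initial law to an arbitrary $\PP^{x,i}$, I would invoke the ergodic theorem for positive Harris recurrent Markov processes (the continuous-time analogue of Theorem~17.1.7 in \cite{MeynTweedie2009}, as discussed in \cite{sandric2017}), which asserts Cesàro convergence to $\rho(f)$ $\PP^{x,i}$-almost surely irrespective of the starting state. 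No real obstacle is anticipated: the hard work was carried out in the previous propositions, and the only step meriting explicit argument is the ergodicity deduction under $\PP^\rho$ sketched above, with the remainder being a routine invocation of classical ergodic theorems.
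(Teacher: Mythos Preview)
Your proposal is correct and follows essentially the same route as the paper, which simply remarks that the result is ``a direct implication of ergodicity'' and refers to the discussion in \cite{sandric2017}; you have merely unpacked that reference by spelling out the triviality of the invariant $\sigma$-algebra under $\PP^\rho$ and the lift to arbitrary starting points via Harris recurrence.
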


Once we have derived an analogue of Vigon's équations amicales inversés in Section \ref{sec: vigon}, we will be able to express conditions on the L\'evy system $\bm{\Pi}$ of $(\xi,J)$ that guarantee one of the conditions on the L\'evy system $\bm{\Pi}^+$ of $(H^+,J^+)$ required for ergodicity. For the moment we content ourselves with studying the drifts $d_i^+$ of the subordinators associated to the ascending ladder height process.

\begin{lemma}
If $J$ is irreducible, then for any $i  \in [n]$ and an appropriate scaling of local time, the diffusion parameter $b_i$ of $\xi^{(i)}$ is given by
$$b_i^2 = 2d_i^+\hat{d}{}^+_i.$$
\end{lemma}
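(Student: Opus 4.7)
My plan is to extract the identity from the spatial Wiener--Hopf factorization \eqref{eq: wiener-hopf} by comparing the $(i,i)$ entries of both sides in the regime $\theta \to \infty$. Recall that \eqref{eq: wiener-hopf} holds only up to pre-multiplication by a positive diagonal matrix corresponding to the normalisation of the local times $\mathsf{L}^{(i)}$ at the supremum; the phrase ``appropriate scaling of local time'' in the lemma simply fixes this normalisation so that \eqref{eq: wiener-hopf} holds as written.

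For the left-hand side, the $(i,i)$ entry of $-\bm{\Psi}(\theta)$ equals $-\Psi_i(\theta) + q_{i,i}$ because $G_{i,i}(\theta) = 1$ forces $(\bm{Q}\odot\bm{G}(\theta))_{i,i} = q_{i,i}$. The L\'evy--Khintchine representation
$$\Psi_i(\theta) = \mathrm{i}a_i\theta - \tfrac{1}{2} b_i^2 \theta^2 + \int_\R \bigl(e^{\mathrm{i}\theta x} - 1 - \mathrm{i}\theta x \one_{\{|x| \leq 1\}}\bigr)\,\Pi_i(\diff x)$$
together with the standard estimate $\int_\R (1 \wedge x^2)\,\Pi_i(\diff x) < \infty$ gives that the jump integral is $o(\theta^2)$ as $\theta \to \infty$. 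Hence the $(i,i)$ entry of the left-hand side divided by $\theta^2$ converges to $\tfrac{1}{2} b_i^2$.

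For the right-hand side, writing out the matrix product gives the $(i,i)$ entry
$$\sum_{k=1}^n \frac{\pi(k)}{\pi(i)}\,\bm{\hat\Phi}{}^+(\mathrm{i}\theta)_{k,i}\,\bm{\Phi}^+(-\mathrm{i}\theta)_{k,i}.$$
By the definition \eqref{char exp map}, for $k\neq i$ the relevant matrix entries are $-\hat q^+_{k,i}\hat G^+_{k,i}(\mathrm{i}\theta)$ and $-q^+_{k,i}G^+_{k,i}(-\mathrm{i}\theta)$. Since $\Delta^+_{k,i} \geq 0$, the functions $G^+_{k,i}(-\mathrm{i}\cdot)$ and $\hat G^+_{k,i}(\mathrm{i}\cdot)$ are bona fide characteristic functions and hence bounded in modulus by $1$; every term with $k \neq i$ is therefore $O(1)$ and vanishes after dividing by $\theta^2$. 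For $k = i$ the contribution is $\bigl(\hat\Phi^+_i(\mathrm{i}\theta) - \hat q^+_{i,i}\bigr)\bigl(\Phi^+_i(-\mathrm{i}\theta) - q^+_{i,i}\bigr)$, and applying the classical subordinator-drift asymptotics
$$\Phi^+_i(-\mathrm{i}\theta)/\theta \longrightarrow -\mathrm{i}\, d^+_i, \qquad \hat\Phi^{+}_i(\mathrm{i}\theta)/\theta \longrightarrow \mathrm{i}\,\hat d^{+}_i, \qquad \theta \to \infty,$$
this term divided by $\theta^2$ converges to $d^+_i\hat d^+_i$. Matching the two sides yields $b_i^2 = 2\,d^+_i\hat d^+_i$.

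The main obstacle is to justify rigorously the imaginary-axis asymptotic $\Phi^+_i(-\mathrm{i}\theta)/\theta \to -\mathrm{i}d^+_i$. On the positive real axis this is a textbook fact for subordinators, but here one must control $\theta^{-1}\int_0^\infty (1 - e^{\mathrm{i}\theta x})\,\Pi^+_i(\diff x)$ for real $\theta\to\infty$. Splitting the integral at $x = 1/|\theta|$ and using $\int_0^1 x\,\Pi^+_i(\diff x) < \infty$, the small-$x$ piece is bounded by $|\theta|\int_0^{1/|\theta|} x\,\Pi^+_i(\diff x)$, which is $o(\theta)$ by dominated convergence, while the large-$x$ piece is bounded by $2|\theta|^{-1}\Pi^+_i(1/|\theta|,\infty)$, which also tends to $0$ thanks to $|\theta|^{-1}\Pi^+_i(1/|\theta|,1) \leq \int_{1/|\theta|}^1 x\,\Pi^+_i(\diff x) \to 0$ and finiteness of $\Pi^+_i(1,\infty)$. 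The killing constant and $q^+_{i,i}$ are $O(1)$ and do not affect the limit, and the dual identity follows from the same argument applied to the ascending ladder height of the dual MAP.
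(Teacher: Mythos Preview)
Your argument is correct and follows essentially the same route as the paper's own proof: read off the $(i,i)$ entry of the Wiener--Hopf factorisation, divide by $\theta^2$, use boundedness of the off-diagonal characteristic functions and the $o(\theta)$ behaviour of the subordinator jump integrals, and match the quadratic coefficients. Two inconsequential slips to clean up: the $(i,i)$ entry of $-\bm{\Psi}(\theta)$ is $-\Psi_i(\theta)-q_{i,i}$ (not $+q_{i,i}$), and in your final estimate the integral $\int_{1/|\theta|}^1 x\,\Pi^+_i(\diff x)$ does not tend to $0$ but to $\int_0^1 x\,\Pi^+_i(\diff x)$ --- the desired conclusion $|\theta|^{-1}\Pi^+_i((1/|\theta|,1])\to 0$ still holds, but needs one more splitting (fix $\delta>0$, bound by $\int_0^\delta x\,\Pi^+_i(\diff x)+|\theta|^{-1}\Pi^+_i((\delta,1])$, then let $\delta\downarrow 0$).
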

\begin{proof}
Let $i \in [n]$. Considering the diagonal of $\bm{\Psi}$, the spatial Wiener--Hopf factorization \eqref{eq: wiener-hopf} yields for every $\theta \in \R$
\begin{align*}
&\mathrm{i}a_i\theta  - \frac{b_i^2}{2}\theta^2 + \int_{\R}\big(\mathrm{e}^{\mathrm{i}\theta x} -1 - \mathrm{i}\theta x \one_{[-1,1]}(x) \big)\, \Pi_i(\diff{x}) + q_{i,i}\\
&\quad= \Big(\hat{q}{}^+_{i,i}-\hat{\dagger}{}^+_i - \mathrm{i}\hat{d}{}^+_i\theta  + \int_0^\infty\big(\mathrm{e}^{-\mathrm{i}\theta x} -1 \big)\, \hat{\Pi}{}^+_i(\diff{x})\Big) \cdot \Big(q^+_{i,i}-\dagger^+_i + \mathrm{i}d^+_i\theta  + \int_0^\infty\big(\mathrm{e}^{\mathrm{i}\theta x} - 1 \big)\, \Pi^+_i(\diff{x})\Big)\\
&\qquad + \sum_{k\neq i} \frac{\pi(k)}{\pi(i)}\hat{q}{}^+_{k,i}q^+_{k,i}\hat{G}{}^+_{k,i}(-\theta)G^+_{k,i}(\theta).
\end{align*}
Since
$$\lim_{\lvert \theta \rvert \to \infty} \frac{1}{\theta^2} \int_{\R} \big(\mathrm{e}^{\mathrm{i}\theta x} - 1 -\mathrm{i}\theta x\one_{[-1,1]}(x)\big) \, \Pi_i(\diff{x}) = 0,$$
and
$$\lim_{\lvert \theta \rvert \to \infty} \frac{1}{\lvert \theta \rvert} \int_0^\infty (\mathrm{e}^{\mathrm{i}\theta x} - 1) \, \Pi^+_i(\diff{x}) = 0, \quad \lim_{\lvert \theta \rvert \to \infty} \frac{1}{\lvert \theta \rvert} \int_0^\infty (\mathrm{e}^{-\mathrm{i}\theta x} - 1) \, \hat{\Pi}{}^+_i(\diff{x}) = 0,$$
and moreover $\lvert \hat{G}{}^+_{k,i}(-\theta)G^+_{k,i}(\theta)\rvert \leq 1$, dividing both sides of the equation by $\theta^2$ and letting $\theta \to \infty$ yields $b_i^2 = 2d_i^+\hat{d}{}^+_i.$
\end{proof}

Thus, $b_i > 0$ if and only if $d_i^+ \wedge \hat{d}{}^+_i > 0$ and therefore Theorem \ref{theo: ergodic} shows that for any MAP with tight overshoots and some L\'evy component $\xi^{(i)}$ with non-zero diffusion component, convergence to the stationary overshoot distribution takes place in total variation.
\smallskip

As a next step we show that under appropriate moment conditions on the L\'evy processes and transitional jumps underlying the ascending ladder height MAP, overshoots converge with polynomial rate and in case of existence of exponential moments even exponentially fast. Thus, the speed of convergence is reflected in the tail behavior of the jump measures associated to the L\'evy system $\bm{\Pi}^+$, with light tails giving exponential decay and moderately heavy tails resulting in polynomial decay. For the proof we yet again make use of the resolvent formula \eqref{eq: resolvent formula} to find Lyapunov functions needed for the resolvent drift criteria \eqref{eq: exp ergodic2} and \eqref{eq:drift subgeo}.

\begin{theorem} \label{theo: exp ergodicity}
Suppose that one of the conditions of Proposition \ref{prop: aperiod} is satisfied.
\begin{enumerate}[label=(\roman*),ref=(\roman*)]
\item  Suppose there exists $\lambda > 0$ such that the exponential $\lambda$-moment exists for all $H^{+,(i)}$, $i \in [n]$, and for all $\Delta^+_{i,j}$, $i \neq j$, such that $q^+_{i,j}  \neq 0$. Then, for the choice $V_\lambda(x,i) = \exp(\lambda x)$, $(x,i) \in \R_+ \times [n]$, $(\cO,\cJ)$ is $\mathcal{R}_\alpha V_\lambda$-uniformly ergodic for any $\alpha > 0$, i.e.\,
\begin{equation} \label{eq:fconv}
\sup_{\lvert f \rvert \leq \mathcal{R}_\alpha V_\lambda} \big\lvert \E^{x,i}[f(\cO_t,\cJ_t)] - \rho(f) \big\rvert \leq C(\alpha)\mathcal{R}_{\alpha} V_\lambda(x,i) \mathrm{e}^{-\kappa(\alpha) t}, \quad (x,i) \in \R_+ \times [n],
\end{equation}
for some constants $C(\alpha),\kappa(\alpha) > 0$. Moreover for any $\varepsilon \in (0,\lambda)$, it holds that 
\begin{equation}\label{eq: tv exp}
\lVert \PP^{x,i}((\cO_t,\cJ_t) \in \cdot) - \rho \rVert_{\mathrm{TV}} \leq \mathfrak C(\alpha,\varepsilon)\mathcal{R}_\alpha V_\lambda(x,i) \mathrm{e}^{-\alpha(\lambda - \varepsilon) t/(\alpha + \lambda)}, \quad (x,i) \in \R_+ \times [n],
\end{equation}
for some constant $\mathfrak C(\alpha,\varepsilon)>0$.
\label{theo: exp ergodicity 1}
\item Suppose that for some $\lambda > 1$ the $\lambda$-moment exists for all $H^{+,(i)}$, $i \in [n]$, and for all $\Delta^+_{i,j}$, $i \neq j$, such that $q^+_{i,j}  \neq 0$. Then, there exists $\tilde{C} > 0$ such that 
$$\lVert \PP^{x,i}((\cO_t,\cJ_t) \in \cdot) - \rho \rVert_{\mathrm{TV}} \leq \tilde{C} \mathcal{R}_\lambda \tilde{V}_\lambda(x,i) (1+t)^{1-\lambda}, \quad (x,i) \in \R_+ \times [n],$$
where  $\tilde{V}_\lambda(x,i) = \mathrm{e}^{\lambda x} \one_{[0,1)}(x) + x^\lambda \one_{[1,\infty)}(x)$. \label{theo: exp ergodicity 2}
\end{enumerate}
\end{theorem}
\begin{proof}
\begin{enumerate}[label=(\roman*),ref=(\roman*)]
\item For a matrix $A \in \R^{n \times n}$, let $\lVert A \rVert_\infty \coloneq \max_{i=1,\ldots n} \sum_{j=1}^n \lvert a_{ij} \rvert$ be its matrix norm induced by the $\sup$-norm. Let $Q_\alpha$ be the operator from the statement of Theorem \ref{resolvent overshoot}. Then,
$$\alpha Q_\alpha V_\lambda(x,i) = \alpha \int_0^x \mathrm{e}^{-\alpha t} V_\lambda(x-t,i) \diff{t} = \frac{\alpha}{\alpha + \lambda}\big(\mathrm{e}^{\lambda x} - \mathrm{e}^{-\alpha x}\big), \quad (x,i) \in \R_+ \times [n].$$
Since $\mathrm{e}^{\lambda x} - \mathrm{e}^{-\alpha x} = O(x)$ as $x \downarrow 0$ and $\Pi_i^+$ are L\'evy subordinator measures, it follows that
$$\int_0^1 \alpha Q_{\alpha} V_\lambda(x,i) \, \Pi_i^+(\diff{x}) < \infty.$$
Moreover, by assumption $H^{+,(i)}$ has an exponential $\lambda$-moment, which according to Theorem 3.6 of \cite{kyprianou2014} is equivalent to $\int_1^\infty \exp(\lambda x) \,\Pi_i^+(\diff{x}) < \infty$, implying that
$$\int_1^\infty \alpha Q_{\alpha} V_\lambda(x,i) \, \Pi_i^+(\diff{x}) < \infty$$
as well and thus
$$\int_0^\infty \alpha Q_\alpha V_\lambda(x,i) \, \Pi_i^+(\diff{x}) < \infty$$
for all $i \in [n]$. Since additionally $\E[\exp(\lambda \Delta^+_{i,j})] < \infty$ for any $i,j \in [n]$ such that $i \neq j$ and $q^+_{i,j} > 0$, it follows that if we define
\begin{align*}
b(\alpha) &\coloneqq \alpha \lVert \bm{\Phi}^+(\alpha)^{-1} \rVert_\infty \sum_{i=1}^n \left(d_i^+ + \int_0^\infty Q_\alpha V_\lambda(x,i)\, \Pi_i^+(\diff{x}) + \sum_{j \neq i} q^+_{i,j} \E[Q_\alpha V_\lambda(\Delta^+_{i,j},j)] \right)\\
&\leq \lVert \bm{\Phi}^+(\alpha)^{-1} \rVert_\infty \sum_{i=1}^n \left(\alpha d_i^+ + \frac{\alpha}{\alpha + \lambda}\int_0^\infty \big(\mathrm{e}^{\lambda x} - \mathrm{e}^{-\alpha x} \big)\, \Pi_i^+(\diff{x}) + \sum_{j \neq i} \frac{\alpha q^+_{i,j}}{\alpha + \lambda} \E\big[\exp\big(\lambda \Delta^+_{i,j}\big)\big] \right),
\end{align*}
we have $b < \infty$. Using \eqref{eq: resolvent formula} it therefore follows for any $i \in [n]$ that
\begin{equation} \label{eq: exp1}
\begin{split}
\mathcal{R}_\alpha V_\lambda(x,i) = \alpha \mathcal{U}_\alpha V_\lambda(x,i) &\leq \frac{\alpha}{\alpha + \lambda}\big(\mathrm{e}^{\lambda x} - \mathrm{e}^{-\alpha x}\big) + b(\alpha)\\
&< \frac{\alpha}{\alpha + \lambda} V_\lambda(x,i) + b(\alpha), \quad (x,i) \in \R_+ \times [n],
\end{split}
\end{equation}
which shows that \eqref{eq: exp ergodic2} holds for $\beta_0 = \alpha/(\alpha + \lambda) \in (0,1)$ and $b(\alpha) < \infty$ as above. Under the given assumptions, $(\cO,\cJ)$ is Harris recurrent and there exists an irreducible skeleton chain by Proposition \ref{prop: harris} and Proposition \ref{prop: aperiod}, hence $(\cO,\cJ)$ is irreducible and aperiodic. Moreover, $V_\lambda$ is unbounded off petite sets since $V_\lambda$ is increasing and continuous and hence for any $z > 0$, the set $\{(x,i) \in \R_+ \times [n]: V_\lambda(x,i) \leq z\}$ is compact and hence petite, according to Corollary \ref{coro: petite}. Thus, \eqref{eq: exp ergodic2} being satisfied for our choice of $V_\lambda$,  Theorem 5.2 in \cite{DownMeynTweedie1995} implies that  $(\cO,\cJ)$ is $\mathcal{R}_\alpha V_\lambda$-uniformly ergodic.

To establish the more explicit rate of convergence for the total variation norm in \eqref{eq: tv exp}, note that \eqref{eq: exp1} combined with \eqref{eq: erg equiv} shows that for the petite set $C(\varepsilon) = \{V_\lambda \leq (\alpha + \lambda) b(\alpha)/\varepsilon\}$, $\varepsilon \in (0,\lambda)$ and $\phi_{\alpha,\varepsilon}(z) = (\lambda - \varepsilon)z/(\alpha + \lambda)$ we have 
\begin{equation*}
  \mathcal{R}_\alpha V_\lambda \leq \frac{\alpha + \varepsilon}{\alpha + \lambda} V_\lambda + b(\alpha)\one_{C(\varepsilon)} = V_\lambda - \phi_{\alpha,\varepsilon} \circ V_\lambda + b(\alpha) \one_{C(\varepsilon)},
\end{equation*}
and thus, the claim follows easily from \eqref{eq:subgeo rate}. 
\item Since $\tilde{V}_\lambda(x,i) = V_\lambda(x,i)$ for $x \in [0,1)$, $i \in [n]$, it follows from above that 
$$\int_0^1 \lambda Q_\lambda \tilde{V}_\lambda(x,i) \, \Pi^+_i(\diff{x}) < \infty.$$
Moreover, for $x \geq 1$ we have $\lambda Q_\lambda \tilde{V}_\lambda(x,i) \leq x^\lambda$ and thus by our moment assumptions on $H^{+,(i)}$ and $\Delta^+_{i,j}$ 
$$\int_1^\infty \lambda Q_\lambda \tilde{V}_\lambda(x,i) \, \Pi^+_i(\diff{x}) < \infty, \quad \E[\lambda Q_\lambda \tilde{V}_\lambda(\Delta^+_{i,j},j)] < \infty.$$
This shows that 
$$\tilde{b} \coloneq \lambda \lVert \bm{\Phi}^+(\lambda)^{-1} \rVert_\infty \sum_{i=1}^n \left(d_i^+ + \int_0^\infty Q_\lambda \tilde{V}_\lambda(x,i)\, \Pi_i^+(\diff{x}) + \sum_{j \neq i} q^+_{i,j} \E[Q_\lambda \tilde{V}_\lambda(\Delta^+_{i,j},j)] \right) < \infty.$$
Observe now that integrating by parts twice yields that for $x \geq 1$ and $i \in [n]$,
$$\lambda Q_\lambda \tilde{V}_\lambda(x,i) \leq \tilde{V}_\lambda(x,i) - x^{\lambda -1} + \mathrm{e}^{-\lambda x}\Big(\mathrm{e}^\lambda+ \int_1^x (\lambda-1) \mathrm{e}^{\lambda t} t^{\lambda -2} \diff{t} \Big)$$
and for $x \in [0,1)$, 
$$\lambda Q_\lambda \tilde{V}_\lambda(x,i) \leq \mathrm{e}^{\lambda x}.$$
Thus, for all $(x,i) \in \R_+ \times [n]$, we have 
$$\lambda Q_\lambda \tilde{V}_\lambda(x,i) \leq \tilde{V}_\lambda(x,i) - (\tilde{V}_\lambda(x,i))^{\frac{\lambda-1}{\lambda}} + \mathrm{e}^{-\lambda x}\Big(\mathrm{e}^\lambda+ \int_1^x (\lambda -1) \mathrm{e}^{\lambda t} t^{\lambda -2} \diff{t} \Big) + \mathrm{e}^{\lambda-1}\one_{[0,1]}(x)$$
and hence by the resolvent formula and the definiton of $\tilde{b}$, 
\begin{equation}\label{eq:subgeo1}
\mathcal{R}_\lambda \tilde{V}_\lambda(x,i) \leq \tilde{V}_\lambda(x,i) - (\tilde{V}_\lambda(x,i))^{\frac{\lambda-1}{\lambda}} + \mathrm{e}^{-\lambda x}\Big(\tilde{b} + \mathrm{e}^\lambda+ \int_1^x (\lambda-1) \mathrm{e}^{\lambda t} t^{\lambda -2} \diff{t} \Big)+ \mathrm{e}^{\lambda-1}\one_{[0,1]}(x).
\end{equation}
Let $x^\ast > 1$ be large enough such that for all $x > x^\ast$
$$\psi_\lambda(x) \coloneq \mathrm{e}^{-\lambda x}\Big(\tilde{b} + \mathrm{e}^{\lambda} + \int_1^x (\lambda-1) \mathrm{e}^{\lambda t} t^{\lambda -2} \diff{t}  \Big) \leq \frac{1}{2} x^{\lambda -1}.$$
By the same arguments as in the previous part, the compact set $C \coloneq [0,x^\ast] \times [n]$ is petite and it follows from \eqref{eq:subgeo1} that 
\begin{equation}\label{eq:subgeo2}
\mathcal{R}_\lambda \tilde{V}_\lambda \leq \tilde{V}_\lambda - \phi \circ \tilde{V}_\lambda + \tilde{c} \one_C,
\end{equation}
where $\tilde{c} \coloneq \mathrm{e}^{\lambda -1} + \max_{x \in [0,x^\ast]} \psi_\lambda(x) < \infty$ and $\phi(z) = \tfrac{1}{2} z^{1-1\slash \lambda}$, $z \geq 1$, is concave, differentiable and increasing. Hence, \eqref{eq:drift subgeo} is satisfied. The assertion now follows from \eqref{eq:subgeo rate} upon noting that 
$$H_{\lambda\phi}(t) = \int_1^t (1/(\lambda\phi)(s)) \diff{s} = 2 (t^{1\slash \lambda}-1), \quad H_{\lambda\phi}^{-1}(t) = \Big(1+ \frac{t}{2}\Big)^\lambda,$$
and therefore the rate of convergence $\Xi(t)$ defined in Appendix \ref{sec: markov stability} is given by 
$$\Xi(t) = 1\slash (\lambda\phi \circ H_{\lambda\phi}^{-1})(t) = \frac{2}{\lambda}\Big(1+ \frac{t}{2}\Big)^{1-\lambda}.$$
\end{enumerate}
\end{proof}
\begin{remark}
Our analysis of the mixing behavior of self-similar Markov processes later on profits from the exact exponential total variation rate, since the Lamperti--Kiu transform turns the exponential rate into a polynomial one.
\end{remark}

As a consequence, we can infer exponential and polynomial $\beta$-mixing rates for the overshoot process.

Recall the definition of the $\beta$-mixing coefficient from \eqref{eq: beta mix coeff2}.
\begin{theorem} \label{theo: mixing}
Suppose that one of the conditions of Proposition \ref{prop: aperiod} is satisfied.
\begin{enumerate}[label=(\roman*),ref=(\roman*)]
\item Suppose that the exponential moment assumption from Theorem \ref{theo: exp ergodicity}.\ref{theo: exp ergodicity 1} is satisfied and let $\eta$ be a probability measure on $(\R_+ \times [n],\cB(\R_+ \times [n]))$ such that $\eta(\cdot,[n])$ has an exponential $\lambda$-moment. Then, for any $\delta \in (0,1)$, $(\cO,\cJ)$ started in $\eta$ is exponentially $\beta$-mixing with the $\beta$-mixing coefficient $\beta(\eta,\cdot)$ satisfying
$$\beta(\eta,t) \leq 2\varrho(\eta,\lambda,\delta) \mathrm{e}^{-\lambda t/(1+\delta)},$$
for
$$\varrho(\eta,\lambda,\delta) \coloneqq C(\lambda,\delta) \sup_{t \geq 0} \E^\eta\big[\mathcal{R}_{2\lambda/\delta}V_\lambda(\cO_t,\cJ_t) \big] < \infty,$$
for some constant $C(\lambda,\delta) > 0$ and $V_\lambda(x,i) = \exp(\lambda x)$, $(x,i) \in \R_+ \times [n]$.\label{theo: mixing 1}
\item Suppose that the $\lambda$-moment assumption from Theorem \ref{theo: exp ergodicity}.\ref{theo: exp ergodicity 2} is satisfied for some $\lambda > 2$. Then, $(\cO_t,\cJ_t)_{t \geq 0}$ started in its invariant distribution is $\beta$-mixing with rate
$$\beta(\rho,t) \lesssim (1+t)^{2- \lambda}, \quad t \geq 0.$$\label{theo: mixing 2}
\end{enumerate}
\end{theorem}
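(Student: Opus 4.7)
My plan for (i) is to deduce exponential $\beta$-mixing directly from the exponential ergodicity of Theorem \ref{theo: exp ergodicity}\ref{theo: exp ergodicity 1}, via Masuda's upgrade \eqref{eq: masuda criterion}. The Markov property together with the triangle inequality gives the standard decomposition
\begin{align*}
\beta(\eta; s, s+t) \leq 2\, \E^\eta\bigl[\lVert \PP^{(\cO_s,\cJ_s)}((\cO_t,\cJ_t) \in \cdot) - \rho \rVert_{\TV}\bigr],
\end{align*}
and inserting the total-variation bound of Theorem \ref{theo: exp ergodicity}\ref{theo: exp ergodicity 1} converts this into
\begin{align*}
\beta(\eta; s, s+t) \leq 2\mathfrak{C}(\delta)\, \mathrm{e}^{-t/(2+\delta)} \int_{\R_+ \times \Theta} \mathcal{R}_\lambda V_\lambda(y,z)\, \mu^\eta_s(\diff y, \diff z),
\end{align*}
where $\mu^\eta_s$ denotes the law of $(\cO_s,\cJ_s)$ under $\PP^\eta$. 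Taking the supremum in $s$ recovers the constant $\varrho(\eta,\lambda)$, and the remaining task is to verify $\varrho(\eta,\lambda) < \infty$.

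The main obstacle in (i) is this uniform-in-$s$ bound on $\int \mathcal{R}_\lambda V_\lambda\, d\mu_s^\eta$. The decisive ingredient is the resolvent drift inequality $\mathcal{R}_\lambda V_\lambda \leq V_\lambda/2 + b$ already proved as \eqref{eq: exp1}. Via the standard equivalence between resolvent and semigroup drift conditions in the Meyn--Tweedie framework (\cite{DownMeynTweedie1995}), this upgrades to $\sup_{s \geq 0} \mathcal{P}_s V_\lambda(x,i) \leq V_\lambda(x,i) + C$; integrating against $\eta$, whose first marginal by hypothesis carries an exponential $\lambda$-moment, gives $\sup_s \E^\eta[V_\lambda(\cO_s,\cJ_s)] < \infty$, and the same drift inequality then yields the finiteness of $\varrho(\eta,\lambda)$. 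An alternative, more hands-on route would bypass the equivalence by computing $\mathcal{P}_s V_\lambda$ directly from the overshoot formula \eqref{eq: overshoot law}, using that the Laplace transform identity $\int_0^\infty \mathrm{e}^{\lambda r} U^+_{i,j}(\diff r) = (\bm{\Phi}^+(-\lambda)^{-1})_{i,j}$ is finite under the hypothesized exponential $\lambda$-moments.

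For (ii) I start from the stationary identity $\beta(\rho,t) \leq \int \lVert \PP^{x,i}((\cO_t,\cJ_t) \in \cdot) - \rho\rVert_{\TV}\, \rho(\diff x, \diff i)$. Naively invoking Theorem \ref{theo: exp ergodicity}\ref{theo: exp ergodicity 2} at level $\lambda$ fails: by Fubini on the explicit density \eqref{eq: stationary dist} of $\rho$, finiteness of $\int \mathcal{R}_\lambda \tilde V_\lambda\, d\rho$ would demand $\int x^{\lambda+1}\, \Pi^+_i(\diff x) < \infty$ and the analogous $(\lambda+1)$-moment of $\Delta^+_{i,j}$, that is, one moment beyond the hypothesis. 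The workaround is to apply Theorem \ref{theo: exp ergodicity}\ref{theo: exp ergodicity 2} at the reduced exponent $\lambda - 1$ (legitimate because $\lambda > 2$ gives $\lambda-1 > 1$, and $\lambda$-moments dominate $(\lambda-1)$-moments), yielding precisely the rate $t^{1-(\lambda-1)} = t^{2-\lambda}$. The drift inequality \eqref{eq:subgeo2} supplies $\mathcal{R}_{\lambda-1}\tilde V_{\lambda-1} \leq \tilde V_{\lambda-1} + \mathrm{const}$, and the remaining integral $\int \tilde V_{\lambda-1}\, d\rho$ reduces, again by Fubini on $\rho$'s density, to the assumed moments $\int x^\lambda\, \Pi^+_i(\diff x) < \infty$ and $\E[(\Delta^+_{i,j})^\lambda] < \infty$. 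The real content of (ii) is therefore this one-power sacrifice, forced by the mismatch between the polynomial ergodic rate and the stationary integrability of $\rho$.
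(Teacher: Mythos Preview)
Your argument for part (ii) is essentially the paper's: stationarity reduces $\beta(\rho,t)$ to $\int \lVert P_t((x,i),\cdot)-\rho\rVert_{\TV}\,\rho(\diff x,\diff i)$, one applies Theorem~\ref{theo: exp ergodicity}\ref{theo: exp ergodicity 2} at exponent $\lambda-1$ (the one-power sacrifice you identify), and finiteness of $\int \mathcal R_{\lambda-1}\tilde V_{\lambda-1}\,\diff\rho$ follows from the drift inequality \eqref{eq:subgeo2} and the explicit density of $\rho$.

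For part (i), your main route---deducing $\sup_s\mathcal P_s(\mathcal R_\lambda V_\lambda)\lesssim \mathcal R_\lambda V_\lambda + \text{const}$ from $\mathcal R_\lambda V_\lambda$-uniform ergodicity and then integrating against $\eta$---is correct and is in fact cleaner than the paper's proof. The paper instead carries out your ``alternative hands-on route'': it bounds $\int_{\R_+}\int_1^\infty \mathrm{e}^{\lambda y}\,\PP^{x,i}(\cO_t\in\diff y,\cJ_t=j)\,\eta(\diff x,\{i\})$ directly from the overshoot formula \eqref{eq: overshoot law}, Fubini, and the sawtooth structure. Your abstract argument via Down--Meyn--Tweedie avoids this computation entirely (note that $V_\lambda$ and $\mathcal R_\lambda V_\lambda$ are comparable here, since $\lambda Q_\lambda V_\lambda(x,i)=(e^{\lambda x}-e^{-\lambda x})/2$ gives a lower bound too).

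However, your description of the hands-on alternative contains a genuine error. The integral $\int_0^\infty \mathrm{e}^{\lambda r}\,U^+_{i,j}(\diff r)$ is \emph{infinite} for every $\lambda>0$: since $\E^{0,\bm\pi^+}[H_1^+]<\infty$, the potential measures satisfy $U^+_{j,j}([0,z])\sim z\,\pi^+(j)/\E^{0,\bm\pi^+}[H_1^+]$ (Theorem~28 of \cite{dereich2017}), so $U^+_{i,j}$ has linear growth and its exponential Laplace transform diverges. The identity $\int \mathrm{e}^{\lambda r}U^+_{i,j}(\diff r)=(\bm\Phi^+(-\lambda)^{-1})_{i,j}$ you invoke does not hold. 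What the paper actually does in the direct computation is control only the \emph{truncated, weighted} integrals $\int_0^{t}\mathrm{e}^{\lambda(y-t)}\,U^+_{i,j}(\diff y)$ uniformly in $t$; the key tool for this is the increment bound of Lemma~\ref{lemma: renewal map}, namely $U^+_{i,j}((x,t])\leq U^+_{j,j}(t-x)$, combined with the linear asymptotics just mentioned. So if you want to sketch the hands-on route, replace the Laplace-transform claim by this renewal-type estimate.
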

\begin{proof}
\begin{enumerate}[label=(\roman*),ref=(\roman*),leftmargin=0cm,itemindent=.5cm]
\item By Theorem \ref{theo: exp ergodicity}, for any $\alpha > 0$ and $\varepsilon\in (0,\lambda)$ there exists $\mathfrak C(\alpha,\varepsilon)> 0$  s.t.\
$$\lVert \PP^{x,i}(\cO_t \in \cdot) - \rho\rVert_{\mathrm{TV}} \leq \mathfrak C(\alpha,\varepsilon) \mathcal{R}_{\alpha} V_{\lambda }(x,i) \mathrm{e}^{- \alpha(\lambda - \varepsilon)t/(\alpha + \lambda)}, \quad (x,i) \in \R_+ \times [n],$$
which for the choice $\varepsilon = \lambda\delta/(2(1+\delta))$ and $\alpha = 2\lambda /\delta$ becomes 
$$\lVert \PP^{x,i}(\cO_t \in \cdot) - \rho\rVert_{\mathrm{TV}} \leq \mathfrak C(2\lambda /\delta,\lambda\delta/(2(1+\delta))) \mathcal{R}_{2\lambda/\delta} V_{\lambda }(x,i) \mathrm{e}^{- \lambda t/(1+\delta)}, \quad (x,i) \in \R_+ \times [n].$$
Hence, the assertion will follow from Lemma 3.9 in Masuda \cite{masuda2007} if we can establish that $\varrho(\eta,\lambda,\delta) < \infty.$ Either by a direct calculation or setting $t = 0$ in \eqref{eq:fconv} it follows that $\rho(\mathcal{R}_{2\lambda/\delta}V_\lambda) < \infty$. Thus, using triangle inequality and \eqref{eq:fconv} we obtain for any $t \geq 0$
$$\E^{x,i}[\mathcal{R}_{2\lambda/\delta} V_\lambda(\mathcal{O}_t, \mathcal{J}_t)] \leq \rho(\mathcal{R}_{2\lambda/\delta} V_\lambda) + \lVert \PP^{x,i}(\cO_t,\cJ_t) - \rho\rVert_{\mathcal{R}_{2\lambda / \delta} V_\lambda} \leq \rho(\mathcal{R}_{2\lambda/\delta} V_\lambda) + C(2\lambda/\delta)\mathcal{R}_{2\lambda/\delta} V_\lambda(x,i).$$
With \eqref{eq: exp1} it now follows that 
$$\sup_{t \geq 0}\E^\eta[\mathcal{R}_{2\lambda/\delta} V_\lambda(\cO_t,\cJ_t)] \leq \rho(\mathcal{R}_{2\lambda/\delta} V_\lambda) + C(2\lambda/\delta)\big(b(2\lambda/\delta) + 2\eta(V_{\lambda})/(2+\delta) \big) < \infty$$
by assumption on $\eta$. This proves the result.

\item By stationarity, it holds that 
$$\beta(\rho,t) = \int_{\R_+ \times [n]} \lVert \mathcal{P}_{t}((x,z),\cdot) - \rho \rVert_{\mathrm{TV}} \, \rho(\diff{x}\times \diff{z}) = \sum_{i = 1}^n \int_{\R_+} \lVert \mathcal{P}_{t}((x,i),\cdot) - \rho \rVert_{\mathrm{TV}} \, \rho(\diff{x},\{i\}).$$
Since the $(\lambda-1)$th moments of $H^{+,(i)}_1$ for all $i \in [n]$ and $\Delta^+_{i,j}$ for all $i,j \in [n]$ such that $q^+_{i,j} \neq 0$ exist, it follows from Theorem \ref{theo: exp ergodicity}.\ref{theo: exp ergodicity 2} that 
$$\beta(\rho,t) \leq \tilde{C}(1+t)^{2-\lambda}\sum_{i=1}^n \int_{\R_+} \mathcal{R}_{\lambda-1} \tilde{V}_{\lambda - 1}(x,i) \, \rho(\diff{x,\{i\}}),$$
and hence, to prove the assertion it is enough to show that the integrals on the right-hand side are finite.
From the drift inequality \eqref{eq:subgeo2} established in the proof of Theorem \ref{theo: exp ergodicity}.\ref{theo: exp ergodicity 2} we obtain that for any $i \in [n]$,
$$\int_{\R_+ \times [n]} \mathcal{R}_{\lambda -1} \tilde{V}_{\lambda-1}(x,i) \, \rho(\diff{x},\{i\}) \leq \int_0^1 \mathrm{e}^{(\lambda - 1) x} \rho(\diff{x},i) + \tilde{c} \rho(C) + \int_1^\infty x^{\lambda -1} \, \rho(\diff x, \{i\}).$$
Since by our moment assumptions
\begin{equation} \label{eq:polymixing}
\begin{split}
\int_1^\infty x^{\lambda -1}\, \rho(\diff x, \{i\}) &= \frac{1}{\E^{0,\bm{\pi}^+}[H_1^+]} \int_1^\infty x^{\lambda -1}\Big(\pi^+(i)\overbar{\Pi}{}^+_i(x) + \sum_{j \neq i} \pi^+(j)q^+_{j,i} \overbar{F}{}^+_{j,i}(x)\Big)\diff{x},\\
&\leq \frac{1}{\E^{0,\bm{\pi}^+}[H_1^+]} \Big(\pi^+(i)\int_1^\infty x^{\lambda}\,\Pi^+_i(\diff{x}) + \sum_{j \neq i} \pi^+(j)q^+_{j,i}\int_1^\infty  x^\lambda\, F^+_{j,i}(\diff{x})\Big)\\
&< \infty,
\end{split}
\end{equation}
the assertion follows.
\end{enumerate}
\end{proof}
As a direct corollary we obtain the exponential resp.\ polynomial $\beta$-mixing behavior of MAPs sampled at first hitting times provided that creeping is possible or the L\'evy system has some minor regularity and moreover the respective moment conditions on the MAP are satisfied. Let
$$\mathcal{K}_t \coloneq \sigma\big(\big(\xi_{T_s}, J_{T_s}\big), s \leq t\big), \quad \overbar{\mathcal{K}}_t \coloneq \sigma\big(\big(\xi_{T_s}, J_{T_s}\big), s \geq t\big), \quad t \geq 0$$
be the $\sigma$-algebras generated by the MAP sampled at first hitting times up to level $t$ and from level $t$ onwards, respectively.

\begin{corollary}\label{coroll: mixing}
Suppose that the assumptions of Theorem \ref{theo: exp ergodicity}.\ref{theo: exp ergodicity 1} are satisfied and let $\eta$ be a probability measure on $(\R_+ \times [n],\cB(\R_+ \times [n]))$ such that $\eta(\cdot,[n])$ has an exponential $\lambda$-moment. Then, for any $\delta \in (0,1)$,
$$\sup_{ t > 0} \beta_{\PP^{\eta}}\big(\mathcal{K}_t, \overbar{\mathcal{K}}_{t+s} \big) \leq 2\varrho(\eta,\lambda,\delta) \mathrm{e}^{-\lambda s/(1+\delta)}, \quad s > 0,$$
where $\varrho(\eta,\lambda,\delta) > 0$ is the constant from Theorem \ref{theo: mixing}. If instead the assumptions from \ref{theo: exp ergodicity}.\ref{theo: exp ergodicity 2} are satisfied with $\lambda > 2$, then 
$$\sup_{t>0}\beta_{\PP^{\rho}}\big(\mathcal{K}_t, \overbar{\mathcal{K}}_{t+s} \big) \lesssim (1+s)^{2-\lambda}, \quad s > 0.$$
\end{corollary}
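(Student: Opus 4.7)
The plan is to reduce the corollary to a direct application of Theorem \ref{theo: mixing} by identifying the $\sigma$-algebras generated by the MAP sampled at first hitting times with those generated by the overshoot process. The key observation is the pointwise identity $\xi_{T_r} = r + \cO_r$ and $J_{T_r} = \cJ_r$, valid on $\{T_r < \infty\}$, which has $\PP^\eta$-full probability for every fixed $r \geq 0$ by \ref{ass: long time}. Since shifting by the deterministic quantity $r$ is a measurable bijection on each factor, I get
$$\mathcal{K}_t = \sigma\big((\cO_r,\cJ_r): r \leq t\big), \qquad \overbar{\mathcal{K}}_{t+s} = \sigma\big((\cO_r,\cJ_r): r \geq t+s\big),$$
so that $\beta_{\PP^\eta}(\mathcal{K}_t, \overbar{\mathcal{K}}_{t+s})$ is exactly the $\beta$-mixing coefficient at lag $s$ between the past and the future of the Markov process $(\cO,\cJ)$ at time $t$ under $\PP^\eta$.

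Next, I will invoke the Markov property of $(\cO,\cJ)$ from Proposition \ref{overshoot feller} to collapse this path-space $\beta$-coefficient to a marginal total variation estimate: conditional on $\mathcal{K}_t$ the future depends only on $(\cO_t,\cJ_t)$, and, conditional on $(\cO_{t+s},\cJ_{t+s})$, the trajectory after time $t+s$ has a law independent of the past, so the TV distance on $\overbar{\mathcal{K}}_{t+s}$ collapses to a TV distance on the $(t+s)$-marginal. This yields
$$\beta_{\PP^\eta}(\mathcal{K}_t, \overbar{\mathcal{K}}_{t+s}) = \int \lVert \mathcal{P}_s((x,i),\cdot) - \PP^\eta((\cO_{t+s},\cJ_{t+s}) \in \cdot) \rVert_{\mathrm{TV}} \, \PP^\eta((\cO_t,\cJ_t) \in \diff (x,i)),$$
which is precisely the quantity bounded in Theorem \ref{theo: mixing} via Masuda's Lemma 3.9. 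Hence the exponential and polynomial estimates from parts \ref{theo: mixing 1} and \ref{theo: mixing 2} of that theorem transfer immediately to the left-hand side.

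Finally, I will verify the uniformity in $t$ that is claimed through $\sup_{t > 0}$ on the left-hand sides. In the exponential case, the constant $\varrho(\eta,\lambda,\delta)$ already contains a supremum over the time at which the mixing is evaluated, so the bound is automatically uniform in $t$; in the polynomial case, invariance of $\rho$ under the semigroup of $(\cO,\cJ)$ makes the corresponding supremum trivial. The only step requiring some care — and hence the main obstacle, such as it is — is the collapse of the path-space $\beta$-coefficient to the marginal TV estimate, which depends on the Feller property of $(\cO,\cJ)$ to ensure regularity of the conditional distributions involved; this is entirely standard for Borel right Markov processes and requires no new ingredients beyond Proposition \ref{overshoot feller}.
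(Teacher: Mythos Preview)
Your proposal is correct and matches the paper's intended approach: the paper presents the result as a ``direct corollary'' of Theorem \ref{theo: mixing}, and your identification $\mathcal{K}_t = \sigma((\cO_r,\cJ_r): r \leq t)$ via the deterministic bijection $(\xi_{T_r},J_{T_r}) = (r + \cO_r, \cJ_r)$ is precisely the observation that makes the reduction immediate. Your middle paragraph (collapsing the path-space $\beta$-coefficient to a marginal TV bound via the Markov property) is redundant, since that argument is already absorbed into the proof of Theorem \ref{theo: mixing} through Masuda's Lemma 3.9; once the $\sigma$-algebras are identified, $\sup_{t>0} \beta_{\PP^\eta}(\mathcal{K}_t,\overbar{\mathcal{K}}_{t+s}) = \beta(\eta,s)$ by definition and the bounds follow verbatim.
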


\section{Équations amicales invers\'es for MAPs} \label{sec: vigon}
With the help of the spatial Wiener--Hopf factorization for MAPs we can generalize Vigon's équation amicale inversé for L\'evy processes to a characterization of the L\'evy system of the ascending ladder height MAP in terms of the L\'evy system of the parent MAP and the potential measures of the ascending ladder height process of the dual MAP. This is crucial for our results since this relation will allow to impose conditions on the parent MAP instead of the ascending ladder height MAP that imply the overshoot convergence results from the previous section. To this end, we first need to recall some concepts from distribution theory and introduce more notation.

Let $\mathcal{S}(\R)$ be the Schwartz space of rapidly decreasing smooth functions on $\R$ and consider its dual space $\mathcal{S}^\prime(\R)$, the space of tempered distributions. For $\mu \in \mathcal{S}^\prime(\R)$ the $k$-th derivative $\mu^{(k)} \in \mathcal{S}^\prime(\R)$ is defined by
$$\big\langle \mu^{(k)},\phi\big\rangle =(-1)^k \big\langle\mu,\phi^{(k)}\big\rangle, \quad \phi \in \mathcal{S}(\R), k \in \N.$$
If $\mu$ is induced by a function $\psi \in \cB(\R)$ via
$$\langle\mu,\phi\rangle = \int_{\R} \psi(x) \phi(x) \diff{x}, \quad \phi \in \mathcal{S}(\R),$$
we just write $\mu = \psi$, provided that the above integrals are well defined. Similarly, if $\mu$ is a measure on $(\R,\cB(\R))$ such that $\int \phi \diff{\mu}$ is well-defined for any $\phi \in \mathcal{S}(\R)$, we identify the distribution induced by $\phi \mapsto \int \phi \diff{\mu}$ with $\mu$.

For a L\'evy measure $\nu$ integrating $x \mapsto \lvert x \rvert$ on $[-1,1]$, let $\bbGamma\nu$ be the tempered distribution defined via
$$\big\langle\bbGamma\nu,\phi\big\rangle \coloneq \int_{\R} (\phi(x)  -  \phi(0))\, \nu(\diff{x}),\quad \phi\in\mathcal{S}(\R),$$
and for a general L\'evy measure $\nu$ let $\bbGamma^2 \nu$ be the tempered distribution defined via
$$\big\langle\bbGamma^2\nu,\phi\big\rangle \coloneq \int_{\R} (\phi(x)  -  \phi(0)  - \phi^\prime(0)x \one_{[-1,1]}(x))\, \nu(\diff{x}),\quad \phi\in\mathcal{S}(\R).$$
Recall that for a tempered distribution $\mu \in \mathcal{S}^\prime(\R)$ the Fourier transform $\mathscr{F} \mu$ is defined by
$$\langle\mathscr{F}\mu, \phi\rangle \coloneq \langle\mu, \mathscr{F} \phi\rangle = \Big\langle\mu,\int_{\R} \mathrm{e}^{\mathrm{i} x \cdot} \phi(x) \diff{x} \Big\rangle, \quad \phi \in \mathcal{S}(\R),$$
and that the Fourier transform is a bijective, continuous mapping on $\mathcal{S}^\prime(\R)$. If $\delta$ is the Dirac delta distribution and letting $\psi_2(x) = x^2$, $x \in \R$, it is immediate that
$$\mathscr{F} \delta = \mathrm{id}, \quad \mathscr{F} \delta^\prime = -\mathrm{i}\cdot\mathrm{id}, \quad \mathscr{F} \delta^{\prime\prime} = -\psi_2.$$
Hence, for a L\'evy subordinator with characteristic Fourier exponent  $\kappa$ , L\'evy measure $\nu$, drift $d \geq 0$ and killing rate $q \geq 0$ we obtain
$$\big\langle\mathscr{F} \big(-q \delta -d \delta^\prime + \bbGamma \nu\big),\phi\big\rangle = \int_{\R} \Big(-q + \mathrm{i}d\theta + \int_{\R} \big(\mathrm{e}^{\mathrm{i}\theta x} -1\big)\, \nu(\diff{x})\Big) \phi(\theta) \diff{\theta} = \int_{\R} \kappa(\theta) \phi(\theta) \diff{\theta},$$
and therefore it holds that
$$\mathscr{F}^{-1} \kappa = -q\delta -d \delta^\prime + \bbGamma \nu,$$
i.e.\ if $\mathcal{A}^\ast$ denotes the infinitesimal generator of the subordinator's dual, then 
$$\mathcal{A}^\ast f = (\mathscr{F}^{-1}\kappa) \ast f, \quad f \in \mathcal{S}(\R).$$
Similarly, we get for the characteristic exponent $\Psi$ of a L\'evy process with generating triplet $(a,\sigma^2,\nu)$ and killing rate $q$ that
$$\mathscr{F}^{-1} \Psi = -q\delta -a \delta^\prime + \frac{1}{2}\sigma^2 \delta^{\prime \prime} + \bbGamma^2 \nu.$$
We start with a simple lemma. Let
$$\sigma(A) \coloneq \sup\{\Re(\lambda) : \lambda \text{ eigenvalue of } A\},$$
be the spectral bound of a quadratic complex matrix $A$.

\begin{lemma}\label{lemma: spectral bound}
For any (non-trivial) MAP with characteristic matrix exponent $\bm{\Psi}$ and $\theta \in \R$, it holds that $\sigma(\bm{\Psi}(\theta)) \leq 0$ and for any $\lambda > 0$, $\lambda \mathbb{I}_n - \bm{\Psi}(\theta)$ is invertible.
\end{lemma}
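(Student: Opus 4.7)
The plan is to use the semigroup identity stated earlier, namely
$$(\mathrm{e}^{t\bm{\Psi}(\theta)})_{i,j} = \E^{0,i}\big[\mathrm{e}^{\mathrm{i}\theta \xi_t};\, J_t = j\big], \quad i,j \in \Theta,\, t \geq 0,\, \theta \in \R,$$
to control the growth of $\mathrm{e}^{t\bm{\Psi}(\theta)}$ as $t \to \infty$. Taking the modulus, every entry satisfies $|(\mathrm{e}^{t\bm{\Psi}(\theta)})_{i,j}| \leq \PP^{0,i}(J_t = j) \leq 1$. Consequently, for any matrix norm on $\R^{n\times n}$, $\sup_{t \geq 0} \lVert \mathrm{e}^{t\bm{\Psi}(\theta)} \rVert < \infty$.

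From this uniform bound I would deduce $\sigma(\bm{\Psi}(\theta)) \leq 0$ by the Jordan decomposition. Writing $\bm{\Psi}(\theta) = \bm{S} \bm{J} \bm{S}^{-1}$ with $\bm{J}$ in Jordan form, one has $\mathrm{e}^{t\bm{\Psi}(\theta)} = \bm{S} \mathrm{e}^{t\bm{J}} \bm{S}^{-1}$, and the block of $\mathrm{e}^{t\bm{J}}$ corresponding to an eigenvalue $\mu$ contains the entry $\mathrm{e}^{t\mu}$ on its diagonal. If some eigenvalue $\mu$ of $\bm{\Psi}(\theta)$ satisfied $\Re(\mu) > 0$, then $|\mathrm{e}^{t\mu}|$ would grow exponentially, forcing $\lVert \mathrm{e}^{t\bm{\Psi}(\theta)}\rVert \to \infty$ and contradicting the uniform bound established above. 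Hence every eigenvalue of $\bm{\Psi}(\theta)$ has non-positive real part, which is precisely $\sigma(\bm{\Psi}(\theta)) \leq 0$.

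For the invertibility statement, given $\lambda > 0$, the spectrum of $\lambda \mathbb{I}_n - \bm{\Psi}(\theta)$ equals $\{\lambda - \mu : \mu \text{ is an eigenvalue of } \bm{\Psi}(\theta)\}$, and since $\Re(\lambda - \mu) = \lambda - \Re(\mu) \geq \lambda > 0$, the value $0$ is not an eigenvalue of $\lambda \mathbb{I}_n - \bm{\Psi}(\theta)$, so this matrix is invertible.

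There is no real obstacle here; the only subtle point is the distinction between the spectral bound and the spectral radius. The uniform boundedness of $\mathrm{e}^{t\bm{\Psi}(\theta)}$ suffices for a bound on the \emph{real parts} of eigenvalues (via the Jordan argument), which is exactly what $\sigma(\bm{\Psi}(\theta)) \leq 0$ asserts.
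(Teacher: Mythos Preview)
Your proof is correct and more elementary than the paper's. You exploit the probabilistic identity $(\mathrm{e}^{t\bm{\Psi}(\theta)})_{i,j} = \E^{0,i}[\mathrm{e}^{\mathrm{i}\theta\xi_t};\,J_t=j]$ directly to bound each entry of $\mathrm{e}^{t\bm{\Psi}(\theta)}$ by $1$, then a Jordan block argument forces $\sigma(\bm{\Psi}(\theta))\leq 0$, and invertibility follows by the obvious spectral shift. The paper instead introduces the exponentially killed occupation measures ${^\lambda U}_{i,j}$, identifies $\{\mathscr{F}\,{^\lambda U}\}(\theta)$ with $\int_0^\infty \mathrm{e}^{t(\bm{\Psi}(\theta)-\lambda\mathbb{I}_n)}\diff{t}$, and uses the algebraic identity $(\lambda\mathbb{I}_n-\bm{\Psi}(\theta))\int_0^T \mathrm{e}^{t(\bm{\Psi}(\theta)-\lambda\mathbb{I}_n)}\diff{t}=\mathbb{I}_n-\mathrm{e}^{T(\bm{\Psi}(\theta)-\lambda\mathbb{I}_n)}$ together with an external reference on matrix semigroups to conclude. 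What the paper's longer route buys is the explicit formula $(\lambda\mathbb{I}_n-\bm{\Psi}(\theta))^{-1}=\{\mathscr{F}\,{^\lambda U}\}(\theta)$, which is invoked in the proof of Theorem \ref{theo: vigon} (the \'equations amicales invers\'es) to identify $(\lambda\mathbb{I}_n-\hat{\bm{\Psi}}{}^+(-\cdot))^{-1}$ as a Fourier transform of potential measures. Your argument establishes the lemma as stated but does not furnish this byproduct, so if you were writing the paper you would still need to derive that identity separately.
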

\begin{proof}
Let $\lambda > 0$ be arbitrary and $\mathbf{e}_\lambda$ be an independent exponential time with mean $1\slash \lambda$ and define for $x \in \R$, $i,j \in [n]$,
\begin{align*}
^\lambda{U}_{i,j}(\diff{x}) = \E^{0,i}\Big[\int_0^{\mathbf{e}_\lambda} \one_{\{\xi_t \in \diff{x}, J_t = j\}}\diff{t}  \Big] &= \int_0^\infty \PP^{0,i}(\xi_t \in \diff{x}, J_t = j, t < \mathbf{e}_\lambda) \diff{t}\\
&= \int_0^\infty \mathrm{e}^{-\lambda t} \PP^{0,i}(\xi_t \in \diff{x}, J_t = j) \diff{t},
\end{align*}
i.e.\, $^\lambda{U}_{i,j}$ is the (finite) occupation measure of the MAP started in $(0,i)$,  while the modulator $J$ is in state $j$, killed at an independent exponential time. Clearly,
$$\big\{\mathscr{F} {^\lambda U}_{i,j}\big\}(\theta) = \Big(\int_0^\infty \mathrm{e}^{t(\bm{\Psi}(\theta)- \lambda \mathbb{I}_n)} \diff{t} \Big)_{i,j},$$
where for a matrix valued function $f\colon \R \to \R^{n \times n},$ such that $f_{i,j}$ is integrable, $\int_{\R} f(t) \diff{t} \coloneqq (\int_{\R} f_{i,j}(t) \diff{t})_{i,j=1,\ldots,n}$. Hence, if we let $^\lambda U \coloneq (^\lambda{U}_{i,j})_{i,j\in [n]}$, it follows that
$$\big\{\mathscr{F} \,{^\lambda U}\big\}(\theta) = \int_0^\infty \mathrm{e}^{t (\bm{\Psi}(\theta)- \lambda \mathbb{I}_n)} \diff{t}.$$
Noting that
\begin{equation}\label{eq: spectral bound}
(\lambda \mathbb{I}_n - \bm{\Psi}(\theta)) \int_0^T \mathrm{e}^{t (\bm{\Psi}(\theta)- \lambda \mathbb{I}_n)}\diff{t} = \mathbb{I}_n - \mathrm{e}^{T(\bm{\Psi}(\theta) - \lambda \mathbb{I}_n)},
\end{equation}
and that the left-hand side converges to
$$(\lambda \mathbb{I}_n - \bm{\Psi}(\theta)) \cdot \big\{\mathscr{F}\, {^\lambda U}\big\}(\theta),$$
as $T \to \infty$, it follows that the matrix exponential $\mathrm{e}^{T(\bm{\Psi}(\theta) - \lambda \mathbb{I}_n)}$ must converge as well as $T \to \infty$. E.g.\ from Theorem 4.12 of \cite{batkai2017}, this can only be the case if $\sigma(\bm{\Psi}(\theta) - \lambda \mathbb{I}_n) \leq 0$. But since $\lambda > 0$ was chosen arbitrarily, it follows that for any $\lambda > 0$, actually $\sigma(\bm{\Psi}(\theta) - \lambda \mathbb{I}_n) < 0$, implying $\sigma(\bm{\Psi}(\theta)) \leq 0$. Again by Theorem 4.12 of \cite{batkai2017}, this implies that
$$\lim_{T \to \infty} \mathrm{e}^{T(\bm{\Psi}(\theta) - \lambda \mathbb{I}_n)} = \mathbf{0}_{n \times n}.$$
Thus, \eqref{eq: spectral bound} yields that $\lambda \mathbb{I}_n - \bm{\Psi}(\theta)$ is invertible with inverse
\begin{equation}\label{eq: inverse char}
(\lambda \mathbb{I}_n - \bm{\Psi}(\theta))^{-1} = \big\{\mathscr{F}\, {^\lambda U}\big\}(\theta).
\end{equation}
\end{proof}
\begin{remark}
This result generalizes part of Theorem 1 in \cite{ivanovs2010} in the sense that, if we let $\Upsilon(z) = (\E^{0,i}[\mathrm{exp}(z\xi_1); J_1 = j])_{i,j\in [n]}$ for $z \in \mathbb{C}$ whenever it is defined, $z \mapsto \det(\Upsilon(z)- \lambda \mathbb{I}_n)$ has no zeros on the imaginary axis, without having to assume anything on the jump structure of $(\xi,J)$ or irreducibility of $J$.
\end{remark}

Let us assume for the rest of this section that
\begin{enumerate}[leftmargin=*,label=($\mathscr{A}3$),ref=($\mathscr{A}3$)]
\item the modulator $J$ of the MAP $(\xi,J)$ is irreducible, i.e.\ $\bm{Q}$ is an irreducible matrix.
\end{enumerate}

\begin{theorem}[Équations amicales inversés for MAPs]\label{theo: vigon}
For an appropriate scaling of local time at the supremum it holds for any $i,j \in [n]$, $i \neq j$ and $x>0$ that
\begin{align}
\Pi_i^+(\diff{x}) &= \int_{0}^\infty \Pi_i(y + \diff{x})\, \hat{U}^+_{i,i}(\diff{y}) + \sum_{k \neq i} \frac{\pi(k)}{\pi(i)} q_{k,i} \int_{0}^\infty F_{k,i}(y + \diff{x})\, \hat{U}^+_{k,i}(\diff{y}),\label{eq: vigon inverse1}\\
q^+_{i,j} F^+_{i,j}(\diff{x}) &= \frac{\pi(j)}{\pi(i)} \int_{0}^\infty \Pi_j(y + \diff{x})\, \hat{U}^+_{j,i}(\diff{y}) + \sum_{k \neq j} \frac{\pi(k)}{\pi(i)} q_{k,j} \int_{0}^\infty F_{k,j}(y + \diff{x})\, \hat{U}^+_{k,i}(\diff{y}).\label{eq: vigon inverse2}
\end{align}
and 
\begin{align}
\hat{\Pi}{}^+_i(\diff{x}) &= \int_{0}^\infty \Pi_i(-y - \diff{x})\, U^+_{i,i}(\diff{y}) + \sum_{k \neq i} q_{i,k} \int_{0}^\infty F_{i,k}(-y -\diff{x})\, U^+_{k,i}(\diff{y}),\label{eq: vigon inverse3}\\
\hat{q}{}^+_{i,j} \hat{F}{}^+_{i,j}(\diff{x}) &= \frac{\pi(j)}{\pi(i)}\Big( \int_{0}^\infty \Pi_j(-y - \diff{x})\, U^+_{j,i}(\diff{y}) + \sum_{k \neq j}  q_{j,k} \int_{0}^\infty F_{j,k}(-y - \diff{x})\, U^+_{k,i}(\diff{y})\Big).\label{eq: vigon inverse4}
\end{align}
\end{theorem}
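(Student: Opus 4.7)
The strategy is to derive all four équations by inverting the Wiener--Hopf factorization \eqref{eq: wiener-hopf} in Fourier space and reading off the jump part on the positive half-line. The analytic bridge is Lemma \ref{lemma: spectral bound}: applied to the MAP subordinators $(H^+, J^+)$ and $(\hat H^+, \hat J^+)$, and combined with a passage to the limit $\lambda \downarrow 0$ in the corresponding $\lambda$-resolvents, it identifies, as matrix-valued tempered distributions on $\R$,
$$\bm{\Phi}^+(-\mathrm{i}\theta)^{-1} = \{\mathscr{F} U^+\}(\theta), \qquad \bm{\hat{\Phi}}{}^+(-\mathrm{i}\theta)^{-1} = \{\mathscr{F} \hat{U}{}^+\}(\theta).$$
Inverting the left factor of \eqref{eq: wiener-hopf} and separately, after the substitution $\theta \mapsto -\theta$, its right factor, then produces the two dual forms
$$\bm{\Phi}^+(-\mathrm{i}\theta) = -\bm{\Delta_\pi}^{-1}\bigl[\{\mathscr{F} \hat{U}{}^+\}(-\theta)\bigr]^\top \bm{\Delta_\pi}\,\bm{\Psi}(\theta),$$
$$\bm{\Delta_\pi}^{-1}\bm{\hat{\Phi}}{}^+(-\mathrm{i}\theta)^\top \bm{\Delta_\pi} = -\bm{\Psi}(-\theta)\,\{\mathscr{F} U^+\}(-\theta).$$

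The next step is to take entry-wise inverse Fourier transforms, turning each matrix product into a sum of convolutions of the distributional building blocks
$$\mathscr{F}^{-1}(\bm{\Psi}_{k,k}) = -a_k\delta' + \tfrac{1}{2} b_k^2\delta'' + q_{k,k}\delta + \bbGamma^2 \Pi_k, \qquad \mathscr{F}^{-1}(\bm{\Psi}_{k,j}) = q_{k,j} F_{k,j} \quad (k\neq j),$$
together with $\mathscr{F}^{-1}(\bm{\Phi}^+(-\mathrm{i}\theta)_{i,i}) = (\dagger_i^+ - q_{i,i}^+)\delta + d_i^+\delta' - \bbGamma \Pi_i^+$ and $\mathscr{F}^{-1}(\bm{\Phi}^+(-\mathrm{i}\theta)_{i,j}) = -q_{i,j}^+ F^+_{i,j}$ for $i \neq j$, with their duals for $\bm{\hat{\Phi}}{}^+$. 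The decisive move is to test the resulting identity against $\phi \in C_c^\infty((0,\infty))$: every contribution supported at the origin---drifts, diffusion terms, killings, and the $q_{k,k}\delta$ terms---disappears, so the left-hand side reduces to $-\Pi_i^+$ or $-q_{i,j}^+ F_{i,j}^+$ (respectively $-\hat{\Pi}_i^+$ or $-\hat{q}_{j,i}^+ \hat{F}{}^+_{j,i}$ in the second version). Since the reflected potentials $\check{\hat{U}}{}^+_{k,i}$ and $\check{U}^+_{k,j}$ are supported in $(-\infty, 0]$, the surviving convolutions collapse via the elementary change of variables
$$\int\!\!\int \phi(x+y)\,\Pi_k(\diff x)\,\check{\hat{U}}{}^+_{k,i}(\diff y) = \int_0^\infty \phi(z) \int_0^\infty \Pi_k(y + \diff z)\,\hat{U}{}^+_{k,i}(\diff y),$$
and its analogue with $F_{k,j}$ replacing $\Pi_k$. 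Matching $(i,j)$-entries in the first identity yields \eqref{eq: vigon inverse1}--\eqref{eq: vigon inverse2}, and in the second---where $\theta \mapsto -\theta$ turns $\Pi_k$, $F_{k,j}$, $U^+$ into their reflections---produces \eqref{eq: vigon inverse3}--\eqref{eq: vigon inverse4}.

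The main obstacle is the rigorous justification of the identification $\bm{\Phi}^+(-\mathrm{i}\theta)^{-1} = \{\mathscr{F} U^+\}(\theta)$: in the absence of killing the potentials $U^+_{i,j}$ are only $\sigma$-finite, so the Fourier transform must be understood in a limiting sense. The remedy is to apply Lemma \ref{lemma: spectral bound} first to the $\lambda$-killed subordinator, for which the finite $\lambda$-resolvent $\{\mathscr{F}\,{}^\lambda U^+\}(\theta) = (\lambda \mathbb{I}_n + \bm{\Phi}^+(-\mathrm{i}\theta))^{-1}$ holds exactly, and then pass to the limit $\lambda \downarrow 0$ against Schwartz functions localized in $(0,\infty)$. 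The linear growth $U^+_{j,j}([0,z]) \sim \pi^+(j)\, z / \E^{0,\bm{\pi}^+}[H_1^+]$ established in Theorem 28 of \cite{dereich2017} (or a direct $\mathscr{S}'$-compactness argument away from the origin) secures this convergence. The caveat ``for an appropriate scaling of local time'' in the statement is absorbed into this identification: \eqref{eq: wiener-hopf} holds only up to a positive diagonal multiplier, which is fixed precisely by requiring $U^+$ and $\hat{U}{}^+$ to match the Fourier inversions above.
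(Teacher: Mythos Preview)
Your overall strategy is the same as the paper's: rearrange the Wiener--Hopf factorization so that the ladder exponent is expressed against the $\lambda$-resolvent of the dual ladder MAP (via Lemma \ref{lemma: spectral bound}), take inverse Fourier transforms entry by entry, restrict to test functions supported in $(0,\infty)$ so that all $\delta$, $\delta'$, $\delta''$ contributions drop out, and finally pass $\lambda \downarrow 0$. The paper does exactly this, writing the exact identity for each $\lambda>0$ as
\[
\bm{\Psi}^+(\theta) = -\bm{\Delta_\pi}^{-1}\bigl((\bm{\hat\Psi}{}^+(-\theta)-\lambda\mathbb{I}_n)^{-1}\bigr)^\top\bm{\Delta_\pi}\bigl(\bm{\Psi}(\theta)+\lambda\bm{\Psi}^+(\theta)\bigr),
\]
so after inversion and restriction to $\mathcal D_+$ an extra term $\lambda\,\Pi_j^+\ast{}^\lambda\tilde U^+_{j,i}$ (and its $F^+_{k,j}$ analogue) appears that must be shown to vanish as $\lambda\downarrow 0$.

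The gap in your proposal is precisely at this point. You justify the limit by appealing to the linear growth $U^+_{j,j}([0,z])\sim \pi^+(j)z/\E^{0,\bm{\pi}^+}[H_1^+]$ from Theorem 28 of \cite{dereich2017}, but that asymptotic requires $\E^{0,\bm{\pi}^+}[H_1^+]<\infty$, which is \emph{not} a hypothesis of Theorem \ref{theo: vigon} (and the corresponding finite-mean condition for the dual ladder process is not assumed either). The alternative ``$\mathscr S'$-compactness argument away from the origin'' is too vague to fill the hole. What the paper actually does here is independent of any moment condition: for $\phi\in\mathcal D_+$ with support in $(a,b)$ it bounds, using the strong Markov property and spatial homogeneity of the dual ladder MAP,
\[
\langle \Pi_j^+\ast\tilde U^+_{j,i},\phi\rangle \leq \lVert\phi\rVert_\infty\,\Pi_j^+((a,\infty))\sum_{k=1}^n \hat U^+_{k,i}(b-a)<\infty,
\]
the key being that the occupation of a window of width $b-a$ by a MAP subordinator is bounded uniformly in the location of the window by $\hat U^+_{i,i}(b-a)$ (this is the same idea as in Lemma \ref{lemma: renewal map}). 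Once this finiteness is in hand, monotone convergence gives both $\lambda\,\Pi_j^+\ast{}^\lambda\tilde U^+_{j,i}\to 0$ and $\Pi_j\ast{}^\lambda\tilde U^+_{j,i}\to\Pi_j\ast\tilde U^+_{j,i}$ on $\mathcal D'_+$. You should replace the appeal to Theorem 28 by this Markov-property bound; with that change your argument becomes essentially identical to the paper's.
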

\begin{remark}
If we let $\bm{\Pi}(\diff{x}) \coloneqq (\bm{\Pi}(i, \diff{x} \times \{j\}))_{i,j = 1,\ldots,n}$, $\bm{\Pi}^+(\diff{x}) \coloneqq (\bm{\Pi}^+(i, \diff{x} \times \{j\}))_{i,j = 1,\ldots,n}$ and $\bm{U}{}^+(\diff{x}) \coloneqq (U^+_{i,j}(\diff{x}))_{i,j=1,\ldots,n}$ (with the analogous definitions for the ascending ladder height process of the dual MAP), then we may compactly express the équations amicales inversés (up to premultiplication of some diagonal matrix corresponding to the scaling of local time at the supremum) for $x > 0$ as 
\begin{align*}
\bm{\Pi}^+(\diff{x}) &= \int_0^\infty \bm{\Delta}_{\bm{\pi}}^{-1}  \hat{\bm{U}}{}^+(\diff{y})^\top \bm{\Delta}_{\bm{\pi}}\, \bm{\Pi}(y + \diff{x}),\\
\hat{\bm{\Pi}}{}^+(\diff{x}) &= \int_0^\infty \bm{\Delta}_{\bm{\pi}}^{-1}  \big(\bm{\Pi}(-y - \diff{x})\,\bm{U}{}^+(\diff{y})\big)^\top \bm{\Delta}_{\bm{\pi}},
\end{align*}
where $\int_0^\infty \bm{A}(\diff{y}) \, \bm{B}(y+\diff{x}) \coloneqq (\sum_{k=1}^n \int_0^\infty a_{i,k}(\diff y) \, b_{k,j}(y + \diff{x}))_{i,j = 1,\ldots, n}$ for measure matrices $\bm{A}(\diff{y}) = (a_{i,j}(\diff{y}))_{i,j = 1, \ldots, n}$ and $\bm{B}(\diff{y}) = (b_{i,j}(\diff{y}))_{i,j = 1,\ldots,n}$.
\end{remark}
\begin{proof}[Proof of Theorem \ref{theo: vigon}]
Analogously to Vigon's \cite{vigon2002} idea, we use inverse Fourier transformations of the quantities involved in the spatial Wiener--Hopf factorization for MAPs to prove the desired equalitites. To this end, recall from \eqref{eq: wiener-hopf} that for an appropriate scaling of local time at the supremum, it holds that
\begin{equation}\label{eq: vigon1}
\bm{\Psi}(\theta) = -\bm{\Delta_\pi}^{-1} \bm{\hat{\Psi}}{}^+(-\theta)^\top\bm{\Delta_\pi} \bm{\Psi}^+(\theta), \quad \theta \in \R.
\end{equation}
Rearranging yields for any $\lambda > 0$,
\begin{equation} \label{eq: vigon2}
\bm{\Psi}^+(\theta) =  -\bm{\Delta_\pi}^{-1} \Big(\Big(\bm{\hat{\Psi}}{}^+(-\theta) - \lambda \mathbb{I}_n\Big)^{-1}\Big)^\top \bm{\Delta_\pi} \big(\bm{\Psi}(\theta) + \lambda \bm{\Psi}^+(\theta)\big), \quad \theta \in \R,
\end{equation}
where invertibility of $\bm{\hat{\Psi}}{}^+(-\theta) - \lambda \mathbb{I}_n$ is shown in Lemma \ref{lemma: spectral bound}. By the form of the characteristic matrix exponent of a MAP it follows by taking inverse Fourier transformation of the distribution induced by the left-hand side that
$$\mathscr{F}^{-1} \bm{\Psi}^+_{i,j} = \one_{\{i = j\}}\big((q^+_{i,i}-\dagger_i^+)\delta - d_i^+ \delta^\prime + \bbGamma \Pi_i^+ \big) + \one_{\{i \neq j\}} q^+_{i,j} F^+_{i,j}.$$
Note that by \eqref{eq: inverse char}
$$\Big(\lambda \mathbb{I}_n - \bm{\hat{\Psi}}{}^+(-\cdot)\Big)^{-1}_{i,j} = \mathscr{F}\, ^\lambda\tilde{U}^+_{i,j},$$
where for an independent exponentially distributed random variable $\mathbf{e}_\lambda$ with mean $1\slash \lambda$ we define
$$^\lambda\tilde{U}^+_{i,j}(\diff{x}) \coloneqq \hat{\E}^{0,i} \Big[\int_0^{\mathbf{e}_\lambda} \one_{\{-H^+_t \in \diff{x}, J^+_t = j\}} \diff{t}\Big], \quad x \in \R.$$
With this observation, our previous discussion of inverse Fourier transforms of L\'evy characteristic exponents and the property that if we regard two tempered distributions whose Fourier transforms are induced by some measurable functions, the Fourier transform of the convolution of those distributions becomes the tempered distribution induced by the product of the functions, it follows that the inverse Fourier transformation of the distribution induced by the right-hand side of \eqref{eq: vigon2} may be written as
\begin{align*}
-\mathscr{F}^{-1}&\Big(\bm{\Delta_\pi}^{-1} \Big(\Big(\bm{\hat{\Psi}}{}^+(-\cdot) - \lambda \mathbb{I}_n\Big)^{-1}\Big)^\top \bm{\Delta_\pi} \big(\bm{\Psi} + \lambda \bm{\Psi}^+\big) \Big)_{i,j}\\
&= -\sum_{k=1}^n \frac{\pi(k)}{\pi(i)}\, \mathscr{F}^{-1} \Big(\Big(\bm{\Psi} + \lambda \bm{\Psi}^+ \Big)_{k,j}\Big(\bm{\hat{\Psi}}^+(-\cdot) - \lambda \mathbb{I}_n\Big)^{-1}_{k,i} \Big)\\
&= \frac{\pi(j)}{\pi(i)} \Big(\bbGamma^2 \Pi_j + \lambda \bbGamma \Pi^+_j - (a_j+ \lambda d_j^+) \delta^\prime + \tfrac{1}{2}\sigma^2 \delta^{\prime\prime} + (q_{j,j} + \lambda (q^+_{j,j} - \dagger^+_j)) \delta\Big) \ast {^\lambda\tilde{U}^+_{j,i}}\\
& \quad + \sum_{k \neq j} \frac{\pi(k)}{\pi(i)} \big(q_{k,j} F_{k,j} + \lambda q^+_{k,j} F^+_{k,j}\big) \ast {^\lambda \tilde{U}^+_{k,i}}.
\end{align*}
Observe that the restriction of $\bbGamma \Pi^+_j$ and $\bbGamma^2 \Pi_j$ to the space $\mathcal{D}_+$ of smooth functions on $\R$ with compact support in $(0,\infty)$ is equal to the distributions induced by $\Pi^+_j$ and $\Pi_j$ on this space, see also Propriété 3.9 in \cite{vigon2002}.  Restricting to $(0,\infty)$ and equating both sides therefore yields the equality of distributions on $\mathcal{D}^\prime_+$,
\begin{equation}\label{eq: vigon3}
\begin{split}
&\one_{\{i = j\}} \Pi_i^+ + \one_{\{i \neq j\}} q^+_{i,j} F^+_{i,j}\\
&\quad= \frac{\pi(j)}{\pi(i)} \big(\Pi_j\vert_{(0,\infty)} + \lambda \Pi^+_j\vert_{(0,\infty)}\big) \ast {^\lambda \tilde{U}^+_{j,i}} + \sum_{k \neq j} \frac{\pi(k)}{\pi(i)} \big(q_{k,j} F_{k,j} \vert_{(0,\infty)} + \lambda q^+_{k,j} F^+_{k,j}\vert_{(0,\infty)}\big) \ast {^\lambda \tilde{U}^+_{k,i}}.
\end{split}
\end{equation}
Here, we identified the measures with their restriction to $(0,\infty)$ and used that for a measure $\mu$ on $\R$ such that the distribution $\mu \ast {^\lambda \tilde{U}^+_{k,i}}$, is well-defined it holds that
$$\big(\mu\ast {^\lambda \tilde{U}^+_{k,i}}\big)\big\vert_{(0,\infty)} = \big(\mu\big\vert_{(0,\infty)} \ast {^\lambda \tilde{U}^+_{k,i}}\big)\big\vert_{(0,\infty)},$$
since $^\lambda \tilde{U}^+_{k,i}$ has support $\R_{-}$, see also Propriété 3.8 in \cite{vigon2002}. Denote  $$\tilde{U}^+_{k,i}(\diff{x}) \coloneqq \hat{\E}^{0,k} \Big[\int_0^\infty \one_{\{-H^+_t \in \diff{x}, J^+_t = i\}} \diff{t}\Big], \quad x \in \R,$$
and let $\phi \in \mathcal{B}_b((0,\infty))$ have support $\mathrm{supp}(\phi) \subset (a,b)$, where $0<a<b<\infty$. Utilizing the strong Markov property and spatial homogeneity of the dual ascending ladder height process we can calculate as follows for any measure $\mu$ such that $\mu(z,\infty) < \infty$ for all $z > 0$,
\begin{align*}
\Big\vert \int_0^\infty \phi(y)\, \mu\vert_{(0,\infty)} \ast \tilde{U}^+_{j,i}(\diff{y}) \Big\vert &= \Big\vert \int_{-\infty}^0 \int_{(a,b)} \phi(z)\, \mu(\diff{z}-y) \, \tilde{U}^+_{j,i}(\diff{y}) \Big\vert\\
&= \Big\vert \hat{\E}^{0,j}\Big[\int_0^\infty \int_{(a+H^+_t,b+H^+_t)} \phi(z-H^+_t)\, \mu(\diff{z}) \one_{\{J^+_t = i\}} \diff{t}  \Big] \Big\vert\\
&\leq \lVert \phi\rVert_\infty \int_{(a,\infty)} \hat{\E}^{0,j} \Big[\int_{(T^+_{z-b},T^+_{z-a})} \one_{\{J^+_t = i \}} \diff{t} \Big] \,\mu(\diff{z})\\
&=\lVert \phi\rVert_\infty \int_{(a,\infty)} \hat{\E}^{0,j} \Big[\hat{\E}^{H^+_{T^+_{z-b}},J^+_{T^+_{z-b}}} \Big[\int_{(0,T^+_{z-a})} \one_{\{J^+_t = i\}} \diff{t}\Big] \Big] \,\mu(\diff{z})\\
&\leq \lVert \phi\rVert_\infty \int_{(a,\infty)} \hat{\E}^{0,j} \Big[\hat{\E}^{0,J^+_{T^+_{z-b}}} \Big[\int_{(0,T^+_{b-a})} \one_{\{J^+_t = i\}} \diff{t} \Big]\Big] \,\mu(\diff{z})\\
&\leq \lVert \phi \rVert_\infty \mu((a,\infty))\sum_{k=1}^n \hat{U}^+_{k,i}(b-a)\\
&< \infty.
\end{align*}
This implies that $\mu\vert_{(0,\infty)} \ast \tilde{U}^+_{j,i} \in \mathcal{D}^\prime_+$
and dominated convergence yields for any $\phi \in \mathcal{D}_+$ that $\langle \mu\vert_{(0,\infty)} \ast {^\lambda \tilde{U}^+_{j,i}},\phi\rangle \to \langle \mu\vert_{(0,\infty)} \ast \tilde{U}^+_{j,i},\phi\rangle$ as $\lambda \downarrow 0$, such that  $\mu \vert_{(0,\infty)}\ast {^\lambda \tilde{U}^+_{j,i}} \to \mu \vert_{(0,\infty)}\ast {\tilde{U}^+_{j,i}}$ on $\mathcal{D}_+^\prime$ as $\lambda \downarrow 0$ follows. Consequently, letting $\lambda \downarrow 0$,  \eqref{eq: vigon3} implies that we have
$$\one_{\{i = j\}} \Pi_i^+ + \one_{\{i \neq j\}} q^+_{i,j} F^+_{i,j} = \frac{\pi(j)}{\pi(i)}\, \Pi_j \ast \tilde{U}^+_{j,i} + \sum_{k \neq j} \frac{\pi(k)}{\pi(i)} q_{k,j}\, F_{k,j} \ast \tilde{U}^+_{k,i}$$
in $\mathcal{D}^\prime_+$. The relations \eqref{eq: vigon inverse1} and \eqref{eq: vigon inverse2} follow upon noting that by a monotone class argument measures with support on $(0,\infty)$ are uniquely characterized by their action on $\mathcal{D}_+$ and observing that $\hat{U}^+_{i,j}(\diff{y}) = \tilde{U}^+_{i,j}(-\diff{y})$ for $y \geq 0$. Relations \eqref{eq: vigon inverse3} and \eqref{eq: vigon inverse4} are proved similarly by taking inverse Fourier transforms on both sides of of
$$\hat{\bm{\Psi}}{}^+ = -\bm{\Delta_\pi}^{-1} \Big(\big(\bm{\Psi}{}^+(-\cdot) - \lambda \mathbb{I}_n\big)^{-1}\Big)^\top \big(\bm{\Psi}(-\cdot)^\top + \lambda \bm{\Delta_{\pi}}\hat{\bm{\Psi}}{}^+\bm{\Delta_\pi}^{-1}\big) \bm{\Delta_\pi}, \quad \lambda > 0,$$
which is a rearranged version of \eqref{eq: vigon1}.
\end{proof}

Without loss of  generality, for the remainder of this section we fix a scaling of local time at the supremum such that \eqref{eq: wiener-hopf} is satisfied and hence the formulas given in Theorem \ref{theo: vigon} hold without further multiplicative constants. 
As a first consequence of the équations amicales inversés, we obtain a characterization of $\bm{Q}^+$ in terms of the transitional jumps of $(\xi,J)$, which we made use of in Proposition \ref{prop: irr}.

\begin{lemma} \label{lem: irr}
Suppose that for $i,j \in [n]$ with $i \neq j$, we have $\supp(q_{i,j}F_{i,j}) \cap (0,\infty)  \neq \varnothing.$ Then, $q^+_{i,j} > 0.$ 
\end{lemma}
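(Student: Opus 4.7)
The natural tool here is the équation amicale inversé \eqref{eq: vigon inverse2}. The plan is to isolate the single summand corresponding to $k = i$ on the right-hand side of that identity: this index is admissible because $i \neq j$ by assumption, so $i$ belongs to the summation range $\{k \neq j\}$, and all remaining summands are non-negative measures with non-negative prefactors, so dropping them only decreases the right-hand side. Since moreover $\pi(i)/\pi(i) = 1$, the extracted term gives the clean measure inequality on $(0,\infty)$
$$q^+_{i,j}\, F^+_{i,j}(\diff x) \geq q_{i,j} \int_0^\infty F_{i,j}(y + \diff x)\, \hat U^+_{i,i}(\diff y), \quad x > 0.$$

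Evaluating both sides on $(0,\infty)$ and invoking Fubini yields
$$q^+_{i,j}\, F^+_{i,j}((0,\infty)) \geq q_{i,j} \int_0^\infty \overbar{F}_{i,j}(y)\, \hat U^+_{i,i}(\diff y),$$
and any strictly positive lower bound on the right-hand side forces $q^+_{i,j} > 0$, since $q^+_{i,j} = 0$ would trivialize the left-hand side. The remaining task is therefore to verify strict positivity of three factors independently.

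First, the hypothesis $\supp(q_{i,j}F_{i,j}) \cap (0,\infty) \neq \varnothing$ immediately forces $q_{i,j} > 0$ and produces some $x_0 > 0$ in $\supp(F_{i,j})$, so $F_{i,j}((x_0 - \varepsilon, x_0 + \varepsilon)) > 0$ for every $\varepsilon \in (0,x_0)$. By monotonicity of tails, one then has $\overbar{F}_{i,j}(y) \geq F_{i,j}((x_0 - \varepsilon, x_0 + \varepsilon)) > 0$ for every $y \in [0, x_0 - \varepsilon]$. Second, I would check that $\hat U^+_{i,i}([0, x_0 - \varepsilon)) > 0$ using the decomposition from Proposition \ref{char map levy} applied to the dual ladder height MAP: under $\hat\PP^{0,i}$, the modulator $\hat J^+$ starts in $i$ and remains there for an almost surely positive waiting time $\hat\sigma^+_1$, during which $\hat H^+$ evolves as a subordinator started from $0$ and hence spends a set of positive Lebesgue measure below $x_0 - \varepsilon$ almost surely.

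I do not expect a serious obstacle; everything reduces to bookkeeping. The only points that warrant care are (i) making sure that $k = i$ is indeed a valid index in the sum of \eqref{eq: vigon inverse2}, which is exactly the content of the assumption $i \neq j$, and (ii) that the stationary weights $\pi(k)/\pi(i)$ collapse to $1$ for this specific choice, both of which are immediate. All analytical ingredients used — the équations amicales inversés and the MAP path decomposition — are already established earlier in the paper.
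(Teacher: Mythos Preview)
Your proposal is correct and follows essentially the same route as the paper: both isolate the $k=i$ summand in \eqref{eq: vigon inverse2}, evaluate the resulting measure inequality on $(0,\infty)$, and combine strict positivity of $q_{i,j}\overbar{F}_{i,j}$ near the origin with $\hat U^+_{i,i}([0,\varepsilon))>0$ (the paper justifies the latter simply by right-continuity and increasing paths of the dual ladder height process). The only cosmetic difference is that the paper passes directly to $q^+_{i,j} \geq q^+_{i,j}F^+_{i,j}((0,\infty))$ via $F^+_{i,j}$ being a probability distribution, whereas you argue that positivity of the product forces $q^+_{i,j}>0$; both are immediate.
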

\begin{proof}
By assumption, there exists $\varepsilon > 0$ such that $q_{i,j} \overline{F}_{i,j}(z) > 0$ for all $z \in (0,\varepsilon)$. Note also that $\hat{U}^+_{i,i}([0,\varepsilon)) > 0$ by increasing and right-continuous paths of $(H^+,J^+)$ under $\hat{\PP}^{0,i}$. Plugging $(0,\infty)$ into \eqref{eq: vigon inverse2} therefore yields 
$$q^+_{i,j}\geq \int_0^\infty q_{i,j}\overbar{F}_{i,j}(z)\, \hat{U}^+_{i,i}(\diff{z}) \geq \int_0^{\varepsilon} q_{i,j}\overbar{F}_{i,j}(z)\, \hat{U}^+_{i,i}(\diff{z}) >  0.$$
\end{proof}

Another simple consequence is the following.
\begin{lemma}
If for some $j \in [n]$, $\xi^{(j)}$ has infinite jump activity on $\R_+$, i.e.\ $\Pi_j(\R_+) = \infty$, then $\hat{U}{}^+_{j,i}$ does not have an atom at $0$ for all $i \neq j$.
\end{lemma}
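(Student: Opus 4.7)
The plan is to derive a contradiction from \eqref{eq: vigon inverse2} by testing both sides against the set $(0,\infty)$. Fix $i \neq j$ and assume, contrary to the claim, that $c := \hat{U}^+_{j,i}(\{0\}) > 0$. Since $\Pi_j(\{0\}) = 0$ and by hypothesis $\Pi_j(\R_+) = \Pi_j((0,\infty)) = \infty$, an isolated atom of $\hat{U}^+_{j,i}$ at the origin combined with the convolution structure on the right-hand side of \eqref{eq: vigon inverse2} should produce an infinite amount of mass on $(0,\infty)$, which is incompatible with the left-hand side being a finite measure of total mass $q^+_{i,j}$.

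More concretely, \eqref{eq: vigon inverse2} applied to the pair $(i,j)$ with $i \neq j$ gives for $x > 0$
\begin{equation*}
q^+_{i,j} F^+_{i,j}(\diff x) = \frac{\pi(j)}{\pi(i)} \int_{0}^\infty \Pi_j(y+\diff x)\, \hat{U}^+_{j,i}(\diff y) + \sum_{k \neq j} \frac{\pi(k)}{\pi(i)} q_{k,j}\int_0^\infty F_{k,j}(y+\diff x)\, \hat{U}^+_{k,i}(\diff y).
\end{equation*}
Integrating this identity against $\one_{(0,\infty)}(x)$, the left-hand side is at most $q^+_{i,j} < \infty$ since $F^+_{i,j}$ is a probability measure. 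The right-hand side, however, is non-negative and its first term is bounded below by isolating the contribution of the atom of $\hat{U}^+_{j,i}$ at $y=0$:
\begin{equation*}
\int_0^\infty \Pi_j\bigl(y+(0,\infty)\bigr)\, \hat{U}^+_{j,i}(\diff y) \ge c\, \Pi_j((0,\infty)) = \infty.
\end{equation*}
This is the desired contradiction, forcing $\hat{U}^+_{j,i}(\{0\}) = 0$.

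The argument is essentially a one-line consequence of Theorem~\ref{theo: vigon}; the only subtlety to double-check is that \eqref{eq: vigon inverse2} is valid as a genuine measure identity on $(0,\infty)$ (not just on compactly supported smooth test functions on $(0,\infty)$), so that one may integrate against $\one_{(0,\infty)}$ by monotone convergence. This is immediate from the proof of Theorem~\ref{theo: vigon}, where both sides restricted to $(0,\infty)$ are shown to agree as Radon measures on $(0,\infty)$ through a monotone class argument on $\mathcal{D}_+$. No further work is required.
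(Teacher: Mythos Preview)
Your proof is correct and follows essentially the same approach as the paper: assume an atom at $0$, plug the set $(0,\infty)$ into \eqref{eq: vigon inverse2}, and observe that the first term on the right-hand side already contributes $\tfrac{\pi(j)}{\pi(i)}\,\hat{U}^+_{j,i}(\{0\})\,\Pi_j((0,\infty)) = \infty$, contradicting the finiteness of $q^+_{i,j}$. The paper's version is a one-liner that omits your (correct) remark about upgrading the distributional identity to a measure identity via the monotone class argument.
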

\begin{proof}
Suppose that there exists $i \neq j$ s.t.\ $\hat{U}{}^+_{j,i}(\{0\}) = \alpha > 0$. Then, again plugging $(0,\infty)$ into \eqref{eq: vigon inverse2}, implies
$$q^+_{i,j} \geq \frac{\pi(j)}{\pi(i)} \alpha \Pi_j(\R_+) = \infty,$$
which is impossible.
\end{proof}

Let us now use the équations amicales inversés to express our assumptions from Section \ref{sec: stability} on the ascending ladder height process $(H^+,J^+)$ that were needed for ergodicity in terms of conditions on $(\xi,J)$. More precisely, we verify the conditions on the smoothness of the L\'evy system required in Proposition \ref{prop: aperiod} and the moment assumptions on the underlying L\'evy processes and the transitional jumps required in Theorem \ref{theo: exp ergodicity} for exponential or polynomial ergodicity of overshoots, in terms of related conditions on the parent MAP $(\xi,J)$. 

\begin{lemma} \label{lemma: cond vigon}
\begin{enumerate}[label=(\roman*),ref=(\roman*)]
\item If there exists $i \in [n]$ and  $0\leq a <b \leq \infty$ such that $\mathrm{Leb}\vert_{(a,b)} \ll \Pi_i\vert_{(a,b)}$, then also $\mathrm{Leb}\vert_{(a,b)} \ll \Pi^+_i\vert_{(a,b)}$.
\item If there exists $i,j \in [n]$ with $i \neq j$ and $0\leq a < b \leq \infty$ such that $\mathrm{Leb}\vert_{(a,b)} \ll q_{i,j} F_{i,j}\vert_{(a,b)},$ then also $\mathrm{Leb}\vert_{(a,b)} \ll q^+_{i,j}F^+_{i,j}\vert_{(a,b)}$.
\item For fixed $i \in [n]$, $\E[\exp(\lambda H_1{}^{+,(i)})] < \infty$  if 
\begin{equation} \label{eq: exp suff1}
\int_1^\infty \mathrm{e}^{\lambda x}\, \Pi_i(\diff{x}) + \sum_{k \neq i} q_{k,i} \int_1^\infty \mathrm{e}^{\lambda x}\, F_{k,i}(\diff{x}) < \infty.
\end{equation}
\item For fixed $i,j \in [n]$ such that $q^+_{j,i} \neq 0$ and $\lambda > 0$, $\E[\exp(\lambda \Delta^+_{j,i})] < \infty$ if \eqref{eq: exp suff1}  holds.
\item If $\lim_{t \to \infty} \xi_t = \infty$ a.s., then for $\lambda > 0$ and $i \in [n]$, $\E[ (H_1^{+,(i)})^\lambda] < \infty$ if
$$\int_1^\infty x^\lambda\, \Pi_i(\diff{x}) + \sum_{k \neq i} q_{k,i} \int_1^\infty x^\lambda \, F_{k,i}(\diff{x}) < \infty,$$
and for $i,j \in [n]$ such that $q^+_{i,j} \neq 0$, $\E[(\Delta^+_{i,j})^\lambda] < \infty$ if
$$\int_1^\infty x^\lambda\, \Pi_j(\diff{x}) + \sum_{k \neq i} q_{k,j} \int_1^\infty x^\lambda \, F_{k,j}(\diff{x}) < \infty.$$ \label{cond vigon5}
\end{enumerate}
\end{lemma}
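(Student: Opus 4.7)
All five parts proceed from the équations amicales inversés of Theorem~\ref{theo: vigon}. I first record two properties of the dual potential measures $\hat U^+_{i,j}$. Fact (a): $\hat U^+_{i,i}([0,\delta)) > 0$ for every $\delta > 0$ and every $i \in \Theta$, because under $\hat\PP^{0,i}$ the dual MAP subordinator $(\hat H^+, \hat J^+)$ starts at $(0,i)$ and remains in $[0,\delta) \times \{i\}$ up to the a.s.\ positive stopping time $\min(\hat\sigma^+_1, \hat T^{+,(i)}_\delta, \hat\zeta)$. Fact (b): for every $\lambda > 0$,
$$\int_0^\infty e^{-\lambda y}\, \hat U^+_{i,j}(\diff y) = \bigl(\bm{\hat\Phi}{}^+(\lambda)^{-1}\bigr)_{i,j},$$
obtained by integrating $\bigl(\hat \E^{0,i}[e^{-\lambda \hat H^+_t}; \hat J^+_t = j]\bigr)_{i,j} = e^{-t\bm{\hat\Phi}{}^+(\lambda)}$ over $t$, with invertibility of $\bm{\hat\Phi}{}^+(\lambda)$ for $\lambda > 0$ following as in Lemma~\ref{lemma: spectral bound} applied to the dual MAP subordinator.

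For parts (i) and (ii), given $A \subseteq (a,b)$ with $\mathrm{Leb}(A) > 0$, I pick $\epsilon > 0$ small enough that $A' \coloneq A \cap (a,b-\epsilon)$ still has positive Lebesgue measure. For every $y \in [0,\epsilon)$ the translate $y + A' \subseteq (a,b)$ satisfies $\mathrm{Leb}(y+A') > 0$, so the hypothesis of (i) (resp.\ (ii)) yields $\Pi_i(y+A') > 0$ (resp.\ $q_{i,j}F_{i,j}(y+A') > 0$). Inserting into \eqref{eq: vigon inverse1} for (i), and into the $k=i$ summand of \eqref{eq: vigon inverse2} for (ii) (which is present since $i \neq j$, and $q^+_{i,j} > 0$ by Lemma~\ref{lem: irr}), together with fact (a) gives
$$\Pi^+_i(A) \geq \int_0^\epsilon \Pi_i(y+A')\, \hat U^+_{i,i}(\diff y) > 0, \qquad q^+_{i,j} F^+_{i,j}(A) \geq q_{i,j}\int_0^\epsilon F_{i,j}(y+A')\, \hat U^+_{i,i}(\diff y) > 0.$$

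For parts (iii) and (iv), the workhorse is the Fubini-plus-substitution estimate, valid for any Borel measure $\mu$ on $\R_+$ and any indices $k,\ell$:
$$\int_0^\infty \hat U^+_{k,\ell}(\diff y) \int_1^\infty e^{\lambda x}\, \mu(y + \diff x) = \int_0^\infty \hat U^+_{k,\ell}(\diff y) \int_{1+y}^\infty e^{\lambda(z-y)}\, \mu(\diff z) \leq \bigl(\bm{\hat\Phi}{}^+(\lambda)^{-1}\bigr)_{k,\ell} \int_1^\infty e^{\lambda z}\, \mu(\diff z),$$
where the last step uses fact (b). Summing these bounds across the right-hand sides of \eqref{eq: vigon inverse1} (for (iii)) and \eqref{eq: vigon inverse2} with indices $(j,i)$ (for (iv)), with $\mu = \Pi_i$ or $\mu = F_{k,i}$ and the hypothesis \eqref{eq: exp suff1}, yields $\int_1^\infty e^{\lambda x} \Pi^+_i(\diff x) < \infty$ and $\int_1^\infty e^{\lambda x} q^+_{j,i} F^+_{j,i}(\diff x) < \infty$; the equivalence with the exponential $\lambda$-moments of $H^{+,(i)}_1$ and $\Delta^+_{j,i}$ is standard, e.g.\ Theorem~3.6 in \cite{kyprianou2014}.

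Part (v) follows the same scheme with the elementary inequality $(z-y)^\lambda \leq z^\lambda$ (valid for $\lambda \geq 0$, $0 \leq y \leq z$) replacing $e^{\lambda(z-y)} \leq e^{-\lambda y} e^{\lambda z}$, yielding
$$\int_1^\infty x^\lambda\, \Pi^+_i(\diff x) \leq \hat U^+_{i,i}(\R_+) \int_1^\infty z^\lambda\, \Pi_i(\diff z) + \sum_{k \neq i} \frac{\pi(k)}{\pi(i)} q_{k,i}\, \hat U^+_{k,i}(\R_+) \int_1^\infty z^\lambda\, F_{k,i}(\diff z),$$
and the analogous bound for $F^+_{i,j}$ from \eqref{eq: vigon inverse2}. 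The main obstacle is now finiteness of $\hat U^+_{k,i}(\R_+) = (\bm{\hat\Phi}{}^+(0)^{-1})_{k,i}$, i.e.\ invertibility of $\bm{\hat\Phi}{}^+(0)$. This is where the hypothesis $\xi_t \to \infty$ a.s.\ is decisive: reading off $\bm{\hat\Psi}(\theta) = \mathrm{diag}(\psi_i(-\theta)) + \bm{\hat Q}\odot \bm G(-\theta)^\top$, the dual Lévy components are the negatives of those of $(\xi,J)$, so $\hat\xi$ drifts to $-\infty$ a.s.\ under $\hat\PP^{0,\bm\pi}$, its supremum $\overbar{\hat\xi}_\infty$ is a.s.\ finite, the dual ascending ladder height $\hat H^+$ is strictly killed at some positive rate, and hence $\bm{\hat\Phi}{}^+(0)$ is invertible with the required finiteness.
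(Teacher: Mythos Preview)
Your proof is correct and follows essentially the same route as the paper: parts (i)–(ii) via positivity of $\hat U^+_{i,i}$ near $0$ and translation invariance, parts (iii)–(iv) via the Laplace transform identity for $\hat U^+$ and the substitution $z = x+y$, and part (v) via finiteness of $\hat U^+_{k,i}(\R_+)$ once the dual ladder height is killed. One small remark on (v): the clause ``the dual Lévy components are the negatives, so $\hat\xi$ drifts to $-\infty$'' is not quite a self-contained implication (transitional jumps matter too), but the conclusion is correct by time-reversal duality and the paper itself simply asserts this step without further detail.
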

\begin{proof}\quad
\begin{enumerate}[label=(\roman*),ref=(\roman*)]
\item  Let $B \subset (a,b)$ be a Borel set s.t.\ $\mathrm{Leb}(B) > 0$. We may assume that $ \sup B < b$ and hence $B + z \subset (a,b)$ for all $z \in (0,b-\sup B)$. By translation invariance of the Lebesgue measure, we have $\mathrm{Leb}(B + z) > 0$ and therefore by assumption $\Pi_i(B+z) > 0$ for all $z \in (0,b-\sup B)$. From \eqref{eq: vigon inverse1} it follows
$$\Pi^+_i(B) \geq \int_{0}^\infty \hat{U}{}^+_{i,i}(\diff{z}) \, \Pi_i(B+z) \geq \int_{0}^{b - \sup B} \hat{U}{}^+_{i,i}(\diff{z}) \, \Pi_i(B+z)$$
and since $\hat{U}{}^+_{i,i}([0, b -\sup B))  > 0$ by increasing and right-continuous paths of the ascending ladder process of the dual process of $(\xi,J)$, it follows $\Pi^+_{i}(B) > 0$, implying $\mathrm{Leb} \vert_{(a,b)} \ll \Pi^+_{i} \vert_{(a,b)}$.\label{proof: abs cont}
\item This is immediate from \eqref{eq: vigon inverse2} in Theorem \ref{theo: vigon} and the same arguments as in part \ref{proof: abs cont}.
\item Since $J$ is irreducible, it follows from the proof of the Wiener--Hopf factorization in Theorem 26 of \cite{dereich2017} that $\hat{\bm{\Phi}}{}^+$ is invertible and hence, for any $i,j \in [n]$ we have
$$\int_0^\infty \mathrm{e}^{-\lambda y} \, \hat{U}{}^+_{i,j}(\diff{y}) = \big(\bm{\hat{\Phi}}{}^+(\lambda)^{-1}\big)_{i,j}.$$
Thus, with Fubini and \eqref{eq: vigon inverse1}
\begin{align*} 
&\int_1^\infty \mathrm{e}^{\lambda x} \, \Pi^+_i(\diff{x})\\
&\quad = \int_0^\infty \int_1^\infty \mathrm{e}^{\lambda x}\, \Pi_i(y +\diff{x}) \, \hat{U}{}^+_{i,i}(\diff{y}) + \sum_{k \neq i} \frac{\pi(k)}{\pi(i)} q_{k,i} \int_0^\infty \int_1^\infty \mathrm{e}^{\lambda x}\, F_{k,i}(y +\diff{x}) \, \hat{U}{}^+_{k,i}(\diff{y})\\
&\quad = \int_0^\infty \int_{1+y}^\infty \mathrm{e}^{\lambda x}\, \Pi_i(\diff{x})  \, \mathrm{e}^{-\lambda y} \hat{U}{}^+_{i,i}(\diff{y}) + \sum_{k \neq i} \frac{\pi(k)}{\pi(i)} q_{k,i} \int_0^\infty \int_{1+y}^\infty \mathrm{e}^{\lambda x}\, F_{k,i}(\diff{x}) \, \mathrm{e}^{-\lambda y} \hat{U}{}^+_{k,i}(\diff{y}),\\
&\quad \leq  \int_{1}^\infty \mathrm{e}^{\lambda x}\, \Pi_i(\diff{x}) \int_0^\infty \mathrm{e}^{-\lambda y} \hat{U}{}^+_{i,i}(\diff{y}) + \sum_{k \neq i} \frac{\pi(k)}{\pi(i)} q_{k,i} \int_1^\infty \mathrm{e}^{\lambda x}\, F_{k,i}(\diff{x}) \int_0^\infty \mathrm{e}^{-\lambda y} \hat{U}{}^+_{k,i}(\diff{y})\\
&\quad= \int_1^\infty \mathrm{e}^{\lambda x} \, \Pi_i(\diff{x})\big(\bm{\hat{\Phi}}{}^+(\lambda)^{-1}\big)_{i,i} + \sum_{k \neq i} \frac{\pi(k)}{\pi(i)} q_{k,i} \int_1^\infty \mathrm{e}^{\lambda x} F_{k,i}(\diff{x}) \big(\bm{\hat{\Phi}}{}^+(\lambda)^{-1}\big)_{k,i},
\end{align*}
which is finite given the assumption.
\label{proof: iff exp}
\item Analogously to \ref{proof: iff exp}.
\item Under the assumption $\lim_{t \to \infty} \xi_t = \infty$ a.s., the ascending ladder height process of the dual of $(\xi,J)$ is killed a.s.\ and hence for any $i,j\in [n]$, $\hat{U}^+_{i,j}$ is a finite measure. Thus, again by \eqref{eq: vigon inverse1}, \eqref{eq: vigon inverse2} and a change of variables,
$$\int_1^\infty x^\lambda \Pi^+_i(\diff{x}) \leq \hat{U}^+_{i,i}(\R_+)\int_1^\infty x^\lambda \, \Pi_i(\diff{x}) + \sum_{k \neq i} \frac{\pi(k)}{\pi(i)}q_{k,i} \hat{U}^+_{k,i}(\R_+) \int_1^\infty x^\lambda F_{k,i}(\diff{x}) < \infty$$
and 
$$q^+_{i,j}\int_1^\infty x^\lambda F^+_{i,j}(\diff{x}) \leq \hat{U}^+_{j,i}(\R_+)\frac{\pi(j)}{\pi(i)}\int_1^\infty x^\lambda \, \Pi_j(\diff{x}) + \sum_{k \neq j} \frac{\pi(k)}{\pi(i)}q_{k,j} \hat{U}^+_{k,i}(\R_+) \int_1^\infty x^\lambda F_{k,j}(\diff{x}) < \infty.$$
\end{enumerate}
\end{proof}
\begin{remark}
\begin{enumerate}[label=(\roman*),ref=(\roman*)]
\item Conditions \eqref{eq: exp suff1}  are sufficient but not necessary conditions for exponential moments of the components of the L\'evy system $\bm{\Pi}^+$, since $\hat{U}^+_{k,i}$ is trivial for some $k \neq i$ whenever $J^+$ is not irreducible under $(\hat{\PP}^{0,i})_{i \in [n]}$. It is however true that if $\E[\exp(\lambda H^{+,(i)}_1)] < \infty$, we must necessarily have $\int_1^\infty \mathrm{e}^{\lambda x} \,\Pi_i(\diff{x}) < \infty$ and if $\E[\exp(\lambda \Delta^{+}_{i,j})] < \infty$, it must hold $\int_1^\infty \mathrm{e}^{\lambda x} \,F_{i,j}(\diff{x}) < \infty$, since the on-diagonal potential measures $\hat{U}^+_{i,i}$ are non-trivial.
\item We restrict to the case $\lim_{t \to \infty} \xi_t = \infty$ a.s.\ in \ref{cond vigon5}. The oscillatory case $\limsup_{t \to \infty} \xi_t = -\liminf_{t \to \infty} \xi_t = \infty$ a.s.\ is more difficult to handle since in this case $(H^+,J^+)$ is unkilled under the dual measures $\hat{\PP}^{0,i}$ and we have no control over $\hat{\bm{U}}^+$ solely in terms of the characteristics of $(\xi,J)$. In \cite{dereich2017} the authors establish the necessary and sufficient integral criterion given in \eqref{eq: ladder height finite mean} for finiteness of the first moment of $H^+_1$ in the oscillatory regime by taking a detour via random walk theory, building on the strategy for the related problem for L\'evy processes in \cite{DoneyMaller2002}. Taking into account Theorem 1 of \cite{chow1986}, such an ansatz, even though out of scope of this paper, is a possible strategy to tackle the problem at hand in our case as well.
\end{enumerate}
\end{remark}

\section{Application to real self-similar Markov processes} \label{sec: self similar}
In this section we show how to apply our results on the exponential mixing behavior of Markov additive processes sampled at first hitting times to the class of $\alpha$-self-similar Markov processes and in particular strictly $\alpha$-stable L\'evy processes. Even in the case of $\alpha$-stable processes the application is non-trivial since such  L\'evy processes do not satisfy the fundamental assumption of finite mean of the associated ascending ladder height L\'evy process, since in fact the ascending ladder height process is an $\alpha$-stable subordinator with $\alpha \in (0,1)$ and thus does not have a finite first moment. Because of non-ergodicity of the associated overshoots, we can therefore not expect a strong mixing behavior of the stable process sampled at first hitting times. However, making use of the \textit{Lamperti--Kiu transform} for real self-similar Markov processes, we can give bounds on the $\beta$-mixing coefficient of the $\sigma$-algebras generated by the past and the future of $\alpha$-self-similar process sampled at symmetric first hitting times given appropriate properties of the associated MAP. By considering the Lamperti-stable MAP and its explicit characterization found in \cite{chaumont2013}, we are thus able to bound the $\beta$-mixing coefficient of the above $\sigma$-algebras for transient $\alpha$-stable processes. To this end, let us first recall the precise definitions of real $\alpha$-self-similar Markov processes and $\alpha$-stable L\'evy processes and give a brief overview on the Lamperti--Kiu transform and its implications.

We say that a real-valued Feller process $(\Omega, \cG, (\cG_t)_{t \geq 0}, (Z_t)_{t \geq 0}, (\mathbf{P}^{x})_{x \in \R})$ is an $\alpha$-self-similar Markov process, if it satisfies the \textit{scaling property} that  for any $c> 0$,
\begin{equation}\label{eq: scaling property}
\{Z, \mathbf{P}^{cx}\} \overset{\mathrm{d}}{=} \big\{\big(cZ_{c^{-\alpha}t} \big)_{t\geq 0}, \mathbf{P}^{x}\big), \quad x \in \R.
\end{equation}
An (unkilled) L\'evy process $X =(X_t)_{t \geq 0}$ with associated family of probability measures $(\mathbf{P}^x)_{x \in \R}$ is a strictly $\alpha$-stable L\'evy process (or simply stable L\'evy process for short if there is no room for confusion) for $\alpha \in (0,2]$ if it satisfies \eqref{eq: scaling property}. The case $\alpha = 2$ boils down to Brownian motion, which we exclude from here-on. Since L\'evy processes are Feller, $\alpha$-stable L\'evy processes are therefore particular representatives of $\alpha$-self-similar Markov processes.

Taking the perspective commonly encountered in the literature to parametrize the stable process through its index of self-similarity $\alpha$ and the positivity parameter $\rho \coloneq \mathbf{P}^0(X_t \geq 0)$, the L\'evy measure $\Pi$ of $X$ is absolutely continuous with density $\pi$ satisfying
$$\pi(x) = c_+ x^{-(\alpha +1)} \one_{(0,\infty)}(x) + c_- \lvert x \rvert^{-(\alpha +1)} \one_{(-\infty, 0)}(x), \quad x \in \R,$$
where
$$c_+ = \Gamma(\alpha + 1)\frac{\sin(\pi \alpha \rho)}{\pi}, \quad \text{and} \quad c_- = \Gamma(\alpha + 1)\frac{\sin(\pi \alpha (1-\rho))}{\pi}.$$
The L\'evy--Khintchine exponent $\Psi$ is given by
$$\Psi(\theta) = c \lvert \theta \rvert^{\alpha} \big(1- \mathrm{i} \beta \tan \tfrac{\pi\alpha}{2} \mathrm{sgn}(\theta) \big), \quad \theta \in \R,$$
where $\beta = (c_+-c_-)\slash (c_+ + c_-)$ and our specific parametrization forces $c = \cos(\pi\alpha(\rho - 1\slash 2))$. For all of the above statements we refer to Kyprianou \cite{kyprianou2016}.

We now come to the one-to-one correspondence between self-similar Markov processes on $\R$ and Markov additive processes on $\R \times \{-1,1\}$  expressed through the Lamperti--Kiu transform, which is investigated in \cite{kiu80} and \cite{chaumont2013} for the real valued setting, and, more generally for arbitrary state spaces, in \cite{alili2017}. If we let $Z$ be an $\alpha$-self-similar Markov process on $\R$ absorbed at $0$ with lifetime $\tau_0 = \inf\{t > 0: X_t = 0\}$ and define $\PP^{x,i} = \mathbf{P}^{i \mathrm{e}^{x}}$ for $(x,i) \in \R \times \{-1,1\}$ and $\PP^{-\infty, \varpi} = \mathbf{P}^0$, then the process $(\xi,J)$ defined by
$$\begin{cases} \xi_t = \log \lvert Z_{\tau(t)}\rvert \text{ and } J_t = \mathrm{sgn}(Z_{\tau(t)}), \quad &\text{if } t < \int_0^{\tau_0} \lvert Z_s \rvert^{-\alpha} \diff{s},\\ (\xi_t, J_t) = \vartheta \eqcolon (-\infty, \varpi), \quad &\text{if } t \geq \int_0^{\tau_0} \lvert Z_s \rvert^{-\alpha} \diff{s}, \end{cases}$$
where $t \mapsto \tau(t)$ is the time change given by the right-continuous inverse
$$\tau(t) \coloneq \inf\{s \geq 0 : \int_0^s \lvert Z_u\rvert^{-\alpha} \diff{u} > t\},$$
of the continuous additive functional $(A_t)_{t \geq 0}$ of $Z$, given by
$$A_t \coloneq \int_0^{t \wedge \tau_0} \lvert Z_s \rvert^{-\alpha} \diff{s}, \quad t \geq 0,$$
and $\varpi$ is some isolated state, then $((\xi,J), (\PP^x)_{x \in (\R \times \{-1,1\})_{\vartheta}})$ is a MAP on $\R \times \{-1,1\}$ with lifetime $\zeta = \int_0^{\tau_0} \lvert Z_s \rvert^{-\alpha} \diff{s}$ and underlying filtration $(\cF_t = \cG_{\tau(t)})_{t \geq 0}$. Moreover, we have the following trichotomy characterizing the long-time behavior of the MAPs ordinator in terms of the hitting properties of $Z$ at $0$ (one can indeed verify that self-similarity of $Z$ guarantees that these are the only possible cases):
\begin{enumerate}[label=(\alph*),ref=(\alph*)]
  \item if $\mathbf{P}^x(\tau_0 < \infty) = 0$ for any $x \neq 0$, then $\lim_{t \to \infty} \xi_t = \infty$ almost surely;
  \item if $\mathbf{P}^x(\tau_0 < \infty, Z_{\tau_0-} = 0) = 1$ for any $x \neq 0$, then $\lim_{t \to \infty} \xi_t = -\infty$ almost surely;
  \item if $\mathbf{P}^x(\tau_0 < \infty, Z_{\tau_0-} \neq 0) = 1$ for any $x \neq 0$, then the MAP is almost surely killed and its lifetime $\zeta$ is exponentially distributed with a rate not depending on its initial distribution.
\end{enumerate}

Conversely, for a given MAP $(\xi,J)$ with lifetime $\zeta$,
$$Z_t = J_{\sigma(t)} \mathrm{e}^{\xi_{\sigma(t)}} \one_{\big\{ t < \int_0^\zeta \mathrm{e}^{\alpha \xi_s} \diff{s}\big\}}, \quad t \geq 0,$$
where
$$\sigma(t) = \inf\{s \geq 0: \int_0^{s} \mathrm{e}^{\alpha \xi_u} \diff{u} > t\}, \quad t \geq 0,$$
defines an $\alpha$-self-similar Markov process absorbed in $0$ with lifetime $\tau_0 = \int_0^\zeta \mathrm{e}^{\alpha \xi_s} \diff{s}.$ This is however not the direction we are interested in and we refer the reader to the relevant literature cited above for details. Note also that in case of $Z$ being strictly positive almost surely up to its lifetime, the Lamperti--Kiu transform boils down to the Lamperti transform for positive self-similar Markov processes and the associated MAP can be projected onto a killed L\'evy process.

With the help of the Lamperti--Kiu transform we obtain the following result on the $\beta$-mixing coefficient of the $\sigma$-algebras generated by $\alpha$-self-similar Markov processes sampled at past and future hitting times. While the Lamperti-stable MAP is exponentially $\beta$-mixing under the given assumptions, the $\beta$-mixing coefficient for the $\alpha$-self similar Markov process sampled at symmetric first hitting times shows non-homogeneous decay with polynomial rate as a result of the logarithm present in the Lamperti--Kiu transform.

\begin{proposition} \label{prop: mixing self-similar}
Suppose that $Z$ is $\alpha$-self-similar such that $\mathbf{P}^x(\tau_0 < \infty) = 0$ for all $x \neq 0$ and moreover its associated Lamperti--Kiu MAP satisfies the assumptions from Theorem \ref{theo: exp ergodicity}.\ref{theo: exp ergodicity 1}. If $\eta$ is some distribution on $(\R,\mathcal{B}(\R))$ that is concentrated on $\R\setminus (-1,1)$ such that
$$\int_{\R} \lvert x \rvert^\lambda \,\eta{(\diff{x})} < \infty,$$
for some $\lambda > 0$, then for any $\delta \in (0,1)$ there exists a constant $C(\lambda,\eta,\delta) > 0$ such that for any $t \geq 1$ we have
$$\beta_{\mathbf{P}^{\eta}}(\mathcal{N}_t,\overbar{\mathcal{N}}_{t+s}) \leq C(\lambda,\eta, \delta)\Big(\frac{t+s}{t} \Big)^{-\lambda/(1+\delta)}, \quad s > 0,$$
where we denoted
$$\mathcal{N}_t = \sigma\big(Z_{T_s^{\lvert Z \rvert}}, s \leq t \big), \quad  \overbar{\mathcal{N}}_t = \sigma\big(Z_{T_s^{\lvert Z \rvert}}, s \geq t \big).$$
\end{proposition}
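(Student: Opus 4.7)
The plan is to transport the exponential $\beta$-mixing result for MAPs sampled at first hitting times (Corollary~\ref{coroll: mixing}) through the Lamperti--Kiu transform. Let $(\xi,J)$ denote the MAP associated to $Z$. The assumption $\mathbf{P}^x(\tau_0 < \infty) = 0$ for $x \neq 0$ ensures via the trichotomy recalled above that $(\xi,J)$ has infinite lifetime and $\lim_{t \to \infty} \xi_t = \infty$ almost surely, so in particular Assumption~\ref{ass: long time} holds, and the remaining structural hypotheses needed for Theorem~\ref{theo: exp ergodicity}.\ref{theo: exp ergodicity 1} hold by assumption. The pushforward $\tilde\eta$ of $\eta$ under the measurable bijection $y \mapsto (\log|y|, \operatorname{sgn} y)$ from $\R \setminus \{0\}$ onto $\R \times \{-1,1\}$ inherits an exponential $\lambda$-moment, since $\int e^{\lambda x}\, \tilde\eta(\diff x, \Theta) = \int_\R |y|^\lambda\, \eta(\diff y) < \infty$, which is precisely the hypothesis required in Corollary~\ref{coroll: mixing}.

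Next I translate the sampling at first hitting times. Using that the time change $\tau$ is the right-continuous inverse of the continuous additive functional $A_t = \int_0^t |Z_s|^{-\alpha} \diff s$, and that $\overbar{\xi}_t = \log \sup_{v \leq \tau(t)} |Z_v|$, a direct computation gives $T_x^\xi = A(T_{e^x}^{|Z|})$ and hence $\tau(T_x^\xi) = T_{e^x}^{|Z|}$ for every $x \in \R$. Substituting $x = \log s$ in the Lamperti--Kiu identity $\xi_t = \log|Z_{\tau(t)}|$, $J_t = \operatorname{sgn}(Z_{\tau(t)})$ yields $Z_{T_s^Z} = J_{T_{\log s}}\, e^{\xi_{T_{\log s}}}$ for $s > 0$. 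Since $(x,i) \mapsto i\, e^x$ is a measurable bijection from $\R \times \{-1,1\}$ onto $\R \setminus \{0\}$, the single random variable $Z_{T_s^Z}$ generates the same $\sigma$-algebra as the pair $(\xi_{T_{\log s}}, J_{T_{\log s}})$. Letting $s$ range over $(0, t]$ (respectively, over $[t+s, \infty)$) and noting that $\log\colon (0,\infty) \to \R$ is a bijection, I identify $\mathcal{N}_t = \mathcal{K}_{\log t}$ and $\overbar{\mathcal{N}}_{t+s} = \overbar{\mathcal{K}}_{\log(t+s)}$ under $\mathbf{P}^\eta = \PP^{\tilde\eta}$.

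Finally, I apply Corollary~\ref{coroll: mixing} to the MAP $(\xi,J)$ with MAP-time parameters $t' = \log t \geq 0$ and lag $s' = \log(t+s) - \log t = \log((t+s)/t) > 0$:
\begin{equation*}
\beta_{\mathbf{P}^\eta}(\mathcal{N}_t, \overbar{\mathcal{N}}_{t+s}) = \beta_{\PP^{\tilde\eta}}(\mathcal{K}_{\log t}, \overbar{\mathcal{K}}_{\log(t+s)}) \leq 2\varrho(\tilde\eta,\lambda,\delta)\, e^{-s'/(2+\delta)} = 2\varrho(\tilde\eta,\lambda,\delta) \Big(\frac{t+s}{t}\Big)^{-1/(2+\delta)},
\end{equation*}
yielding the claim with $C(\lambda,\eta,\delta) = 2\varrho(\tilde\eta,\lambda,\delta)$. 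The restriction $t \geq 1$ ensures $\log t \geq 0$, so no subtleties arise from negative MAP starting levels.

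The main technical point is the clean identification of the $\sigma$-algebras, which relies on two ingredients that I expect to be the only delicate bookkeeping: the interaction between the time change $\tau$ and the first passage times, and the bijectivity of the coordinate change $(x,i) \leftrightarrow i e^x$ off of $\{Z_0 = 0\}$, which is why the no-atom assumption on $\eta$ at the origin is imposed. Once this identification is in place the rest is just algebra: the exponential rate of the MAP is turned into a ratio $((t+s)/t)^{-1/(2+\delta)}$ by the logarithmic time change intrinsic to the Lamperti--Kiu transform.
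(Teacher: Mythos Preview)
Your proof is correct and follows essentially the same route as the paper: identify the $\sigma$-algebras $\mathcal{N}_t$ and $\overbar{\mathcal{N}}_{t+s}$ with $\mathcal{K}_{\log t}$ and $\overbar{\mathcal{K}}_{\log(t+s)}$ via the Lamperti--Kiu transform and the bijection $(x,i)\mapsto i\mathrm{e}^x$, verify that the pushforward $\tilde\eta$ has the required exponential $\lambda$-moment, and then apply Corollary~\ref{coroll: mixing}. The only cosmetic difference is that the paper is explicit about the $\mathbf{P}^\eta$-nullsets needed to make the $\sigma$-algebra identification precise (since the Lamperti--Kiu identity holds only almost surely on $\{Z_t\neq 0\ \forall t\}$), whereas you absorb this into your final remark; the $\beta$-mixing coefficient is insensitive to such nullsets, so no harm is done.
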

\begin{proof}
First, observe that $Z$ not hitting $0$ almost surely when started away from the origin implies that the time change $(\tau(t))_{t \geq 0}$ is strictly increasing and continuous almost surely. Thus, the overshoot process of $(\log \lvert Z_t \rvert, \mathrm{sgn}(Z_t))_{t \geq 0}$ is indistinguishable from the overshoot process of the associated Lamperti-MAP $(\xi,J)$. Moreover,  the mapping
$$\phi\colon \R \times \{-1,1\} \to \R\setminus\{0\}, \quad (x,i) \mapsto i\mathrm{e}^{x},$$
is a homeomorphism and $Z_t = \phi(\log \lvert Z_t \rvert, \mathrm{sgn}(Z_t))$ for all $t \geq 0$ on the set $\Lambda = \{\omega \in \Omega: Z_t(\omega) \neq 0 \text{ for all } t \geq 0\}$, which is of $\mathbf{P}^\mu$-measure $1$ for any distribution $\mu$ on $(\R,\mathcal{B}(\R))$ not having an atom at $0$. It follows for any $t \geq 1$ with the notation from Corollary \ref{coroll: mixing} that there exists some $\mathbf{P}^\mu$-nullset $N_t^\mu$ such that
$$\mathcal{N}_t\vee N_t^\mu = \big(\big(\xi_{T_s}, J_{T_s}\big), s \leq \log t\big) \vee N_t^\mu = \mathcal{K}_{\log(t)} \vee N_t^\mu,$$
and
$$\overbar{\mathcal{N}}_t\vee N_t^\mu = \big(\big(\xi_{T_{s}}, J_{T_{s}}\big), s \geq \log t\big) \vee N_t^\mu = \overbar{\mathcal{K}}_{\log(t)}\vee N_t^\mu,$$
where for two $\sigma$-algebras $\mathcal{A},\mathcal{B}$ we write $\mathcal{A} \vee \mathcal{B} = \sigma (\mathcal{A} \cup \mathcal{B})$ and we used that for any $1 \leq s \leq t$, $Z_{T^{\lvert Z \rvert}_s} = \phi(\xi_{T_{\log s}}, J_{T_{\log s}})$, $\mathbf{P}^\eta$-almost-surely.
Since moreover $\mathbf{P}^{\eta} = \PP^{\eta \circ \phi}$, it follows from the assumptions on $\eta$ that $\eta \circ \phi$ is concentrated on $\R_+ \times \{-1,1\}$ and that
$$\int_{\R \times \{-1,1\}} \mathrm{e}^{\lambda x} \,\eta \circ \phi(\diff{x},\diff{i}) = \int_{\R \setminus (-1,1)} \mathrm{e}^{\lambda\log \lvert z \rvert} \,\eta(\diff{z}) = \int_{\R \setminus (-1,1)} \lvert z \rvert^\lambda \, \eta(\diff{z}) < \infty.$$
Thus from Corollary \ref{coroll: mixing} and the assumptions on the Lamperti-MAP $(\xi,J)$ it follows that for any $\delta \in (0,1)$ there exists $C(\lambda, \eta, \delta) > 0$ such that
for any $t \geq 1$ and $s > 0$
$$\beta_{\mathbf{P}^{\eta}}(\mathcal{N}_t,\overbar{\mathcal{N}}_{t+s}) = \beta_{\PP^{\eta \circ \phi}}(\mathcal{K}_{\log t},\overbar{\mathcal{K}}_{\log(t+s)}) \leq C(\lambda,\eta,\delta)\mathrm{e}^{-\lambda(\log(t+s)-\log t)/(1+\delta)} = C(\lambda,\eta,\delta) \Big(\frac{t+s}{t} \Big)^{-\lambda/(1+\delta)},$$
as claimed. Note here that the nullsets $N_t^\mu$ and $N_{t+s}^\mu$ from above have no influence on the $\beta$-mixing coefficient by its definition.
\end{proof}

Consider now a scalar $\alpha$-stable process $(X^0_t)_{t \geq 0}$ absorbed upon hitting of the origin, i.e.\ for $\tau_0 = \inf\{s \geq 0 : X_s = 0\}$,
$$X^0_t = X_t \one_{[0,\tau_0)}(t), \quad t \geq 0.$$
We show that $X^0$ satisfies the assumptions from Proposition \ref{prop: mixing  self-similar} that yield $\beta$-mixing of overshoots of the corresponding MAP $(\xi,J)$ obtained through the Lamperti--Kiu transform, which we henceforth will refer to as the \textit{Lamperti-stable MAP}. 

Since the assumptions are couched in form of the ascending ladder height process $(H^+,J^+)$, one direct approach would be to make use of the \textit{deep factorization} of $X^0$ given in \cite{kyprianou2016}, where the MAP exponents $\bm{\Phi}^+$ and $\hat{\bm{\Phi}}{}^+$ of the ascending ladder height processes of $(\xi,J)$ and its dual were explicitly calculated. However, for the sake of exposition, we go another route by making use of the results based on Vigon's équations amicales inversés from Section \ref{sec: vigon} to infer the needed properties of $(H^+,J^+)$ from those of $(\xi,J)$. The characteristics of the latter were calculated in Theorem 10 and Corollary 11 of Chaumont et al.\ \cite{chaumont2013}, giving $\sigma_{\pm 1} = 0$, i.e.\ the underlying L\'evy processes have no Gaussian component,
\begin{align*}
\Pi_{\pm 1}(\diff{x}) &= \mathrm{e}^x \pi(\pm (\mathrm{e}^x-1)) \diff{x}, \quad x \in \R,\\
F_{\pm 1, \mp 1}(\diff{x}) &= \frac{\alpha \mathrm{e}^x}{(1+ \mathrm{e}^x)^{\alpha + 1}} \diff{x}, \quad x \in \R,
\end{align*}
and
$$q_{\pm 1, \mp 1} = \frac{c_{\mp}}{\alpha}.$$
If we assume that $X$ does not have one-sided jumps, then $c_{\pm} > 0$ and hence $J$ is irreducible. Since $\Pi_{\pm1}$ has a strictly positive Lebesgue density on $(0,\infty)$ it follows by Lemma \ref{lemma: cond vigon} that $\mathrm{Leb}\vert_{(0,\infty)} \ll \Pi^+_{\pm 1}\vert_{(0,\infty)}$ as well. Further, we have for $\lambda > 0$
$$\int_1^\infty \mathrm{e}^{\lambda x} \,\Pi_{1}(\diff{x}) = c_+ \int_1^\infty \mathrm{e}^{(\lambda + 1) x} (\mathrm{e}^x -1)^{-(\alpha +1)} \diff{x},$$
and hence
$$\int_1^\infty \mathrm{e}^{\lambda x} \,\Pi_{1}(\diff{x}) < \infty \Longleftrightarrow \lambda \in (0,\alpha).$$
Similarly, we obtain
$$\int_1^\infty \mathrm{e}^{\lambda x} \,\Pi_{-1}(\diff{x}) < \infty \Longleftrightarrow \lambda \in (0,\alpha),$$
and hence $\E[\exp(\lambda \xi^{(\pm 1)}_1)] < \infty$ iff $\lambda \in (0,\alpha)$. Moreover,
$$\int_{\R} \mathrm{e}^{\lambda x} \,F_{\pm1}(\diff{x}) = \alpha \int_{\R} \mathrm{e}^{(\lambda+1)x} (1+\mathrm{e}^x)^{-(\alpha + 1)} \diff{x} < \infty \Longleftrightarrow \lambda \in (0,\alpha).$$
Again by Lemma \ref{lemma: cond vigon} we conclude that $H^{+,(\pm 1)}$ and $\Delta^{+}_{\pm 1, \mp 1}$ all possess an exponential $\lambda$-moment whenever $\lambda \in (0,\alpha)$.

Recall now that $X$ does not hit $0$ if and only if $\alpha \in (0,1)$ and hence the ordinator $\xi$ of the Lamperti-stable MAP satisfies $\limsup_{t \to \infty} \xi_t = \infty$ almost surely if and only if $\alpha \in (0,1)$. Since our asymptotic approach on overshoots of MAPs requires this property, we will restrict to this case and can therefore identify $X = X^0$ almost surely. All that remains to show for exponential $\beta$-mixing of the Lamperti-stable MAP is now upward regularity and irreducibility of $J^+$. Irreducibility of $J^+$ is a direct consequence of Proposition \ref{prop: irr} since $J$ is irreducible and the support of $\Pi_{\pm}$ is unbounded. 
To verify upward regularity, we observe that by Theorem 1 in Kuznetsov and Pardo \cite{KuznetsovPardo2013}, $\xi^{(1)}$ killed at an independent exponential time with rate $c_-\slash \alpha$ belongs to the class of \textit{hypergeometric L\'evy processes} with parameters $(1-\alpha(1-\rho), \alpha \rho, (1-\alpha)(1-\rho), \alpha(1-\rho))$. The ascending ladder height process $H$ of such a hypergeometric L\'evy process is a $\beta$-subordinator with parameters $(\alpha(1-\rho),\alpha(1-\rho),1- \alpha\rho)$, whose L\'evy measure is given by
$$\Pi_H(\diff{x}) = \frac{1-\alpha \rho}{\Gamma(\alpha \rho)}(1-\mathrm{e}^{-x})^{\alpha\rho -2} \mathrm{e}^{-(1+\alpha(1-2\rho))x} \diff{x}, \quad x > 0.$$
Clearly, $\Pi_H((0,1)) = \infty$ and hence $H$ is not compound Poisson, which shows that the associated hypergeometric L\'evy process is upward regular. Since killing has no influence on upward regularity, this now shows that $\xi^{(1)}$ is indeed upward regular. Upward regularity of $\xi^{(-1)}$ can be argued analogously once we observe that $\xi^{(-1)}$ killed at rate $c_+\slash \alpha$ is the hypergeometric process obtained from killing the dual process $\hat{X}$ of $X$ upon entering $(-\infty,0]$. Hence, with the ergodic analysis of overshoots from Section \ref{sec: stability} and Proposition \ref{prop: mixing self-similar}, we have proved the following.

\begin{proposition} \label{prop: mixing alpha}
Let $\alpha \in (0,1)$ and $X$ be strictly $\alpha$-stable. Then the overshoot process of the Lamperti-stable MAP associated to $X$ is $\mathcal{R}_\lambda V_\lambda$-uniformly ergodic and for any starting distribution $\mu$ such that $\mu(\cdot,\{-1,1\})$ has an exponential $\lambda$-moment for any $\lambda \in (0,\alpha)$, the overshoot process is exponentially $\beta$-mixing. Moreover, for any distribution $\eta$ on $(\R,\cB(\R))$ concentrated on $\R \setminus (-1,1)$ such that for some $\lambda \in (0,\alpha)$,
$$\int_{\R} \lvert x \rvert^\lambda \,\eta{(\diff{x})} < \infty,$$
there exists a constant $C(\lambda,\eta,\delta) > 0$ for any $\delta \in (0,1)$ such that for any $t \geq 1$ we have
$$\beta_{\mathbf{P}^{\eta}}(\mathcal{N}_t,\overbar{\mathcal{N}}_{t+s}) \leq C(\lambda,\eta,\delta)\Big(\frac{t+s}{t} \Big)^{-\lambda/(1+\delta)}, \quad s > 0,$$
where we denoted
$$\mathcal{N}_t = \sigma\big(X_{T_s^{\lvert X \rvert}}, s \leq t \big), \quad  \overbar{\mathcal{N}}_t = \sigma\big(X_{T_s^{\lvert X \rvert}}, s \geq t \big).$$
In particular, for any $x \in \R\setminus(-1,1)$ and any $\delta > 0$, there exists a constant $\tilde{C}(\delta) > 0$ such that for any $t \geq 1$
$$\beta_{\mathbf{P}^x}(\mathcal{N}_t,\overbar{\mathcal{N}}_{t+s}) \leq \tilde{C}(\delta)(1+s/t)^{-\alpha/(1+\delta)},\quad s > 0.$$
\end{proposition}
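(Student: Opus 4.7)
The plan is to verify that the Lamperti-stable MAP $(\xi,J)$ associated to the strictly $\alpha$-stable process $X$ (for $\alpha \in (0,1)$) satisfies all hypotheses of Theorem~\ref{theo: exp ergodicity}.\ref{theo: exp ergodicity 1} and Theorem~\ref{theo: mixing}.\ref{theo: mixing 1}, and then to transfer the resulting mixing estimate to $X$ via Proposition~\ref{prop: mixing self-similar}. First I would import the explicit L\'evy system of $(\xi,J)$ from Chaumont--Kyprianou--Pardo--Rivero, namely the absolutely continuous densities $\Pi_{\pm 1}(\mathrm{d}x)=\mathrm{e}^x\pi(\pm(\mathrm{e}^x-1))\mathrm{d}x$ and $F_{\pm 1,\mp 1}(\mathrm{d}x)=\alpha\mathrm{e}^x(1+\mathrm{e}^x)^{-(\alpha+1)}\mathrm{d}x$ with jump rates $q_{\pm 1,\mp 1}=c_{\mp}/\alpha$. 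Since we assume $X$ is two-sided (i.e., $c_+,c_->0$), $J$ is irreducible, so \ref{ass: irreducible} for the parent MAP holds, and the supports of $\Pi_{\pm 1}$ being unbounded together with Proposition~\ref{prop: irr} will deliver irreducibility of $J^+$.

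Next I would check the long-time assumption \ref{ass: long time}. The trichotomy from the Lamperti--Kiu transform tells us that $\limsup_{t\to\infty}\xi_t=\infty$ a.s.\ iff $X$ does not hit the origin, which holds precisely for $\alpha\in(0,1)$; this is the reason for the range restriction. For upward regularity \ref{ass: regularity} I would invoke the fact that $\xi^{(1)}$ (resp.\ $\xi^{(-1)}$) killed at rate $c_-/\alpha$ (resp.\ $c_+/\alpha$) is the hypergeometric L\'evy process identified by Kuznetsov--Pardo with parameters $(1-\alpha(1-\rho),\alpha\rho,(1-\alpha)(1-\rho),\alpha(1-\rho))$, whose ascending ladder subordinator is a $\beta$-subordinator with L\'evy measure having infinite mass near zero, hence the underlying L\'evy process is not compound Poisson and is upward regular; killing does not affect regularity. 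The aperiodicity condition in Proposition~\ref{prop: aperiod}.\ref{prop: aperiod2} follows from $\mathrm{Leb}\vert_{(0,\infty)}\ll\Pi_{\pm 1}\vert_{(0,\infty)}$ together with the absolute continuity transfer in Lemma~\ref{lemma: cond vigon}.

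For the moment hypothesis I would compute $\int_1^\infty \mathrm{e}^{\lambda x}\Pi_{\pm 1}(\mathrm{d}x)$ and $\int_{\mathbb{R}}\mathrm{e}^{\lambda x}F_{\pm 1,\mp 1}(\mathrm{d}x)$ using the explicit densities above; both integrals are finite iff $\lambda\in(0,\alpha)$, owing to the $\mathrm{e}^{(\lambda-\alpha)x}$ behaviour at $+\infty$. Lemma~\ref{lemma: cond vigon} then promotes these to exponential $\lambda$-moments of $H^{+,(\pm 1)}$ and of $\Delta^+_{\pm 1,\mp 1}$ for every $\lambda\in(0,\alpha)$. Also, finiteness of $\mathbb{E}^{0,\bm{\pi}^+}[H_1^+]$ (needed implicitly to even have a stationary distribution) is immediate from the exponential moments. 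At this stage Theorem~\ref{theo: exp ergodicity}.\ref{theo: exp ergodicity 1} yields $\mathcal{R}_\lambda V_\lambda$-uniform ergodicity of the overshoot process, and Theorem~\ref{theo: mixing}.\ref{theo: mixing 1} delivers exponential $\beta$-mixing whenever the initial distribution $\mu(\cdot,\{-1,1\})$ has an exponential $\lambda$-moment.

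Finally, to transfer the bound to $X$, I would apply Proposition~\ref{prop: mixing self-similar} verbatim: the assumption that $\eta$ has no atom at $0$ and finite $\lambda$-th absolute moment translates under the homeomorphism $(x,i)\mapsto i\mathrm{e}^x$ into the requirement that $\eta\circ\phi(\cdot,\{-1,1\})$ has an exponential $\lambda$-moment in $\xi$-coordinates, which is the hypothesis the mixing result needs. I do not anticipate a major technical obstacle, since all the deep inputs (Chaumont et al.\ for the Lamperti-stable characteristics, Kuznetsov--Pardo for upward regularity, Vigon-type Lemma~\ref{lemma: cond vigon} for the moment transfer, Theorem~\ref{theo: exp ergodicity}--\ref{theo: mixing}, and Proposition~\ref{prop: mixing self-similar}) are already in place; the only real point of care is to keep careful track of the range $\lambda\in(0,\alpha)$, which is dictated by the stable L\'evy density's tail and is the sharp obstruction preventing any faster decay rate.
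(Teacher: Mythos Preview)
Your proposal is correct and follows essentially the same route as the paper: the argument preceding the proposition statement in the paper verifies exactly the same checklist (explicit Lamperti-stable characteristics from Chaumont et al., irreducibility of $J$ and of $J^+$ via Proposition~\ref{prop: irr}, upward regularity through the Kuznetsov--Pardo hypergeometric identification, the absolute-continuity and exponential-moment transfers via Lemma~\ref{lemma: cond vigon}, and the range $\lambda\in(0,\alpha)$), and then concludes by invoking Theorems~\ref{theo: exp ergodicity}--\ref{theo: mixing} together with Proposition~\ref{prop: mixing self-similar}. The only cosmetic difference is ordering; all substantive ingredients coincide.
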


\begin{appendix}
\section{Stability of Markov processes} \label{sec: markov stability}
The theory of stability of time-continuous Markov processes is the essential tool for our analysis of MAPs sampled at first hitting times through studying ergodic properties of its overshoots. Stability of discrete-time Markov chains has a long history dating back at least to the 1930s, whereas the systematic treatment of stability of continuous-time Markov processes is comparatively young. The Meyn and Tweedie approach developed in the 1990s, which we will follow, infers recurrence and ergodic properties of continuous-time processes through techniques developed for the discrete-time case by means of sampling the process on a countable grid of (random) times. In this way accessible criteria are established in terms of the transition semigroup, the generator or the resolvent of the Markov process, at least one of which is available for the specific processes under consideration. Let us therefore introduce the most important terminology and results that we will need in the following.

Let $\mathbf X =(X_t)_{t \geq 0}$ be a continuous-time Borel right Markov process on a locally compact, separable space $(\mathcal{X},\cB(\mathcal{X}))$ with cemetery state $\vartheta$, lifetime $\zeta$ and underlying family of probability measures $(\PP^x)_{x \in \cX_\vartheta}$, where $\cX_\vartheta$ denotes the Alexandrov one-point compactification of $\cX$. Denote its sub-Markov transition semigroup by $(P_t)_{t \geq 0}$, which is induced by $(\PP^x)_{x \in \cX_\vartheta}$ via $P_t(x,B) = \PP^x(X_t \in B, t < \zeta)$ for $(x,B) \in \cX_\vartheta \times \cB(\cX_\vartheta).$ Note that by our convention to extend functions $f \in \cB(\cX)$ to $\cB(\cX_\vartheta)$ by setting $f(\vartheta) = 0$, $(P_t)_{t \geq 0}$ restricted to  $\cB_b(\cX)$ coincides with the  Markov transition semigroup of $\mathbf{X}$. Borel right Markov processes are the most general class of strong Markov processes widely used in the literature and are the cornerstone of the \textit{théorie générale} of Markov processes developed mainly in the 1960s to 1980s based on fundamental works of Dynkin, Feller and others. Their precise potential theoretically motivated definition can be found in Sharpe \cite[Definition 8.1]{sharpe1988}, but for our purposes it will be enough to know that Feller processes as defined below are Borel right, which follows from Theorem II.2.12 in \cite{blumenthal1968}. 

We understand Feller processes as càdlàg Markov processes with right-continuous and complete filtration, whose sub-Markov transition semigroup $(P_t)_{t \geq 0}$ is (i) strongly continuous, i.e.\ $\lVert P_t f - f\rVert_\infty \to 0$ as $t \to 0$ for any $f \in \mathcal{C}_0(\cX)$, and (ii) $P_t \mathcal{C}_0(\cX) \subset \mathcal{C}_0(\cX)$ for all $t \geq0$, where $\mathcal{C}_0(\cX)$ is the space of continuous functions on $\cX$ vanishing at infinity. Note that in presence of the \textit{Feller property} (ii), strong continuity (i) is actually satisfied whenever we have pointwise convergence $P_tf(x) \to f(x)$ as $t \to 0$ for all $x \in \cX$, see e.g.\ Kallenberg \cite[Theorem 19.6]{kallenberg2002}.

The \textit{resolvent} $(U_\lambda)_{\lambda > 0}$ of $\mathbf{X}$ is the operator defined by
$$U_\lambda f(x) = \E^x\Big[\int_0^\infty \mathrm{e}^{-\lambda t} f(X_t) \diff{t} \Big] = \int_0^\infty \mathrm{e}^{-\lambda t} \E^x[f(X_t)] \diff{t}, \quad \lambda > 0, x \in \cX_\vartheta, f \in \cB_b(\cX_\vartheta)\cup \cB_+(\cX_\vartheta),$$
which for $x \in \cX$ and $f \in \cB_b(\cX)$ can be written as
$$U_\lambda f(x) = \int_0^\infty \mathrm{e}^{-\lambda t} P_tf(x)\diff{t}.$$
The family of proability measures induced by the $\lambda$-resolvent $U_\lambda$ via $U_\lambda(x,B) \coloneqq U_\lambda \one_B(x)$ can be interpreted as the potentials $U(x,\cdot)$ of the Markov process $\mathbf{X}$ killed at an independent exponential time with rate $1\slash \lambda$, where for $x\in \cX_\vartheta$, the potential $U(x,\cdot)$ defined by
$$U(x,B) \coloneq \int_0^\infty \PP^x(X_t \in B) \diff{t}, \quad B \in \cB(\cX_\vartheta),$$
is the expected sojourn time of $\mathbf{X}$ in $B$ when started in $\mathbf{X}$.

Suppose from here on that $\zeta$ is almost surely infinite and thus $\mathbf{X}$ is an unkilled Borel right Markov process, which is the setting in which Meyn and  Tweedie's stability theory is embedded in. We say that a $\sigma$-finite measure $\chi$ on $(\cX,\cB(\cX))$ is an \textit{invariant measure} for $\mathbf{X}$, if
$$\forall B \in \cB(\cX)\colon \, \PP^\chi(B) \coloneq \int_0^\infty \PP^x(X_t \in B)\, \chi(\diff{x}) = \chi(B).$$
Note that an invariant measure is never unique, since any scaling of the measure is again invariant. We therefore say that an invariant measure $\chi$ is \textit{essentially} unique if it is unique up to constant multiples. If $\chi(\cX) = 1$, we call $\chi$ an \textit{invariant distribution} (which is unique under Harris recurrence, which we define below). The following proposition gives a criterion in terms of the resolvent, which is helpful for detecting an invariant measure provided the resolvent can be determined analytically. The statement has a very classical flavor, but we were not able to find it in the literature. We will use it in combination with an analytical treatment of the resolvent of the overshoot process to determine an essentially unique measure for this process.

\begin{proposition}\label{invariant measure resolvent}
Suppose that $\cH \subset \cB_b(\cX) \cap \cB_+(\cX)$ such that $P_t \cH \subset \cH$ for any $t \geq 0$ and there is a non-trivial measure $\chi$ on $(\cX,\cB(\cX))$ and a family $(\alpha_\lambda)_{\lambda > 0}$ of finite measures on $(\cX, \cB(\cX))$ satisfying $\lim_{\lambda \downarrow 0} \alpha_\lambda(\cX) = 0$ such that for any $f \in \cH$
\begin{equation}\label{resolvent scaling convergence}
\lim_{\lambda \downarrow 0} \int_{\cX} U_\lambda f(x) \, \alpha_\lambda(\diff{x}) = \chi(f) \coloneq \int_\cX f(y) \,\chi(\diff{y}).
\end{equation}
Then, for any $t \geq 0$ and $f \in \cH$,
$$\int_\cX P_tf(y) \, \chi(\diff{y}) = \chi(f).$$
In particular, if $\cH = \cB_b(\cX) \cap \cB_+(\cX)$ (i.e.\ $U_\lambda^{\alpha_\lambda} \coloneq \int_{\cX} U_\lambda(x, \diff{y}) \, \alpha_\lambda(\diff{x})$ converges strongly to $\chi$ as $\lambda \downarrow 0$), then $\chi$ is an invariant measure of $\mathbf{X}$.
\end{proposition}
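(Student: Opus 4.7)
The plan is to establish $\chi(P_t f) = \chi(f)$ for every $f \in \cH$ and $t \geq 0$ by combining the semigroup property with the hypothesised scaling limit \eqref{resolvent scaling convergence}, evaluated at the shifted function $P_t f$. The key algebraic identity, obtained from $P_s P_t = P_{s+t}$ and the substitution $u = s+t$ in the defining integral of $U_\lambda$, is
\begin{equation*}
U_\lambda(P_t f)(x) = \int_0^\infty \mathrm{e}^{-\lambda s} P_{s+t} f(x)\, \diff{s} = \mathrm{e}^{\lambda t}\Big(U_\lambda f(x) - \int_0^t \mathrm{e}^{-\lambda s} P_s f(x)\, \diff{s}\Big).
\end{equation*}
I would then integrate this identity against $\alpha_\lambda(\diff{x})$ to obtain
\begin{equation*}
\mathrm{e}^{-\lambda t}\int_\cX U_\lambda(P_t f)(x)\, \alpha_\lambda(\diff{x}) = \int_\cX U_\lambda f(x)\, \alpha_\lambda(\diff{x}) - \int_\cX \int_0^t \mathrm{e}^{-\lambda s} P_s f(x)\, \diff{s}\, \alpha_\lambda(\diff{x}),
\end{equation*}
and pass to the limit $\lambda \downarrow 0$.

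The assumption $P_t \cH \subset \cH$ is precisely what allows me to invoke \eqref{resolvent scaling convergence} for $P_t f \in \cH$; combined with $\mathrm{e}^{-\lambda t} \to 1$, this yields convergence of the left-hand side to $\chi(P_t f)$. The first term on the right-hand side converges to $\chi(f)$ by direct application of the hypothesis. The remaining error term is controlled by the crucial assumption $\alpha_\lambda(\cX) \to 0$: by Fubini's theorem (joint measurability of $(s,x) \mapsto P_s f(x)$ follows from $\mathbf{X}$ being a Borel right process) and the sub-Markovian bound $\lvert P_s f\rvert \leq \lVert f\rVert_\infty$,
\begin{equation*}
\Big\lvert \int_\cX \int_0^t \mathrm{e}^{-\lambda s} P_s f(x)\, \diff{s}\, \alpha_\lambda(\diff{x})\Big\rvert \leq t\, \lVert f\rVert_\infty\, \alpha_\lambda(\cX),
\end{equation*}
which vanishes as $\lambda \downarrow 0$ since $\alpha_\lambda$ is a finite measure. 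Assembling the three limits yields the desired identity $\chi(P_t f) = \chi(f)$.

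For the final claim, if $\cH = \cB_b(\cX) \cap \cB_+(\cX)$, specialising to $f = \one_B$ for arbitrary $B \in \cB(\cX)$ gives $\int_\cX P_t(y, B)\, \chi(\diff{y}) = \chi(B)$ for every $t \geq 0$, which is the invariance of $\chi$ under $\mathbf{X}$. The argument does not contain a genuinely difficult step; its main subtlety is choosing the resolvent manipulation so that the surviving error term decouples entirely from $(P_s f)$ and can be absorbed using $\alpha_\lambda(\cX) \to 0$ alone, while the invariance of $\cH$ under the semigroup guarantees that the hypothesised limit can still be applied after shifting $f$ to $P_t f$.
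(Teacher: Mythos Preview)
Your proof is correct and follows essentially the same approach as the paper: both use the semigroup identity $U_\lambda(P_t f) = \mathrm{e}^{\lambda t}\big(U_\lambda f - \int_0^t \mathrm{e}^{-\lambda s} P_s f\,\diff{s}\big)$, integrate against $\alpha_\lambda$, bound the correction term by $t\lVert f\rVert_\infty\,\alpha_\lambda(\cX)$, and invoke the hypothesis on both $f$ and $P_t f \in \cH$ to obtain $\chi(P_t f) = \chi(f)$. The only difference is cosmetic rearrangement of where the factor $\mathrm{e}^{\pm\lambda t}$ sits.
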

\begin{proof}
Let $f \in \cH$ such that \eqref{resolvent scaling convergence} holds and $t \geq 0$. We have for any $\lambda > 0$ by the semigroup property of $(P_t)_{t \geq 0}$
\begin{align*}
  U_\lambda^{\alpha_\lambda} (P_tf) &= \int_{\cX} \int_0^\infty \mathrm{e}^{-\lambda s} P_s P_tf(x) \diff{s} \,\alpha_\lambda(\diff{x})\\
  &= \int_{\cX} \int_0^\infty \mathrm{e}^{-\lambda s} P_{s+t}f(x) \diff{s} \, \alpha_\lambda(\diff{x})\\
  &= \mathrm{e}^{\lambda t}\int_{\cX} \int_t^\infty \mathrm{e}^{-\lambda s} P_sf(x) \diff{s} \, \alpha_\lambda(\diff{x})\\
  &= \mathrm{e}^{\lambda t} \left(U_\lambda^{\alpha_\lambda} f - \int_{\cX} \int_0^t \mathrm{e}^{-\lambda s} P_sf(x) \diff{s} \, \alpha_\lambda(\diff{x})\right).
\end{align*}
Since $\lvert \int_{\cX} \int_0^t \mathrm{e}^{-\lambda s} P_sf(x) \diff{s} \,\alpha_\lambda(\diff{x}) \rvert \leq t \lVert f \rVert_\infty \alpha_\lambda(\cX)$ it therefore follows by our assumption that $\alpha_\lambda(\cX) \rightarrow 0$ and
$$U_\lambda^{\alpha_\lambda}(f) \rightarrow \chi(f)$$
as $\lambda \downarrow 0$ that
$$\lim_{\lambda \downarrow 0} U_\lambda^{\alpha_\lambda}(P_tf)  = \chi(f).$$
On the other hand, our assumptions and $P_tf \in \cH$ yield that
$$\lim_{\lambda \downarrow 0} U_\lambda^{\alpha_\lambda}(P_tf)  = \int_\cX P_t f(y) \, \chi(\diff{y})$$
and hence
$$\int_\cX P_tf(y) \, \chi(\diff{y}) = \chi(f)$$
follows. If $\cH = \cB_b(\cX) \cap \cB_+(\cX)$, then for any $B \in \cB(\cX)$ the choice $f = \one_B$  shows that
$$P_t(\chi, B) = \chi(B), \quad \forall t\geq 0,$$
i.e.\ $\chi$ is an invariant measure.
\end{proof}

A $\sigma$-finite measure $\psi$ is called \textit{irreducibility measure} of $\mathbf{X}$, if for any Borel set $B$, $\psi(B) > 0$ implies $U(x,B) > 0$ for any $x \in \cX.$ Whenever such a measure exists, we say that $\mathbf{X}$ is $\psi$-\textit{irreducible} or simply irreducible when the specific measure does not matter. If $\mathbf{X}$ is irreducible, there exists a maximal irreducibility measure $\psi$ in the sense that for any irreducibility measure $\nu$ of $\mathbf{X}$ it holds that $\nu \ll \psi$, see Tweedie \cite[Theorem 2.1]{Tweedie1994}. We define  $\cB^+(\cX) \coloneq \{B \in \cB(\cX): \psi(B) > 0\}$ and call sets in $\cB^+(\cX)$ accessible. Note that maximal irreducibility measures are clearly non-unique. Moreover, if $\mathbf{X}$ is $\psi$-irreducible and admits an invariant measure $\chi$, then $\chi$ is a maximal irreducibility measure. To see this, let $\psi(B) > 0$, then
$$t\chi(B) = \int_0^t \Big( \int_{\cX} \PP^x(X_s \in B)\, \chi(\diff{x})\Big) \diff{s} = \int_{\cX} \Big( \int_0^t \PP^x(X_s \in B)\diff{s}\Big) \,\chi(\diff{x})$$
and by monotone convergence the right hand side converges to $U(\chi,B) \coloneq \int_{\cX} U(x,B) \, \chi(\diff{x}) > 0$ since $U(x,B) > 0$ for all $x \in \cX$ by our choice of $B$. Hence, $\psi \ll \chi$. The next important concept, \textit{Harris recurrence}, is an even stronger property than irreducibility. We say that $\mathbf{X}$ is $\mu$-\textit{Harris recurrent} if there exists a $\sigma$-finite measure $\mu$ on the state space s.t.\
\begin{equation} \label{eq: harris}
\forall B \in \cB(\cX)\colon \, \mu(B) > 0 \implies \PP^x\Big(\int_0^\infty \one_B(X_t) \diff{t} = \infty\Big) =1, \quad \forall x \in \cX,
\end{equation}
i.e.\ if $\mu(B) > 0$, the process almost surely spends infinitely much time in the set $B$. A powerful implication of Harris recurrence is that an invariant measure of a Markov process having this property (we call such processes \textit{positive Harris recurrent}) is essentially unique, see \cite[Théorème 2.5]{AzemaDufloRevuz1969}. Moreover, by the remark succeding this theorem in \cite{AzemaDufloRevuz1969}, an invariant measure $\chi$ of a Harris recurrent process is a Harris measure. Thus, it is \textit{maximal Harris} in the sense that it dominates any other Harris measure, since any Harris measure is in particular an irreducibility measure and $\chi$ is a maximal irreducibility measure, as discussed above.  The defining condition for Harris recurrence is often hard to check directly, however, Kaspi and Mandelbaum \cite[Theorem 1]{KaspiMandelbaum1994} provide us with a simpler equivalent criterion for Borel right Markov processes: suppose that there exists a $\sigma$-finite measure $\nu$ such that for any Borel set $B$ we have the implication
\begin{equation} \label{eq: kaspi}
\nu(B) > 0 \implies \PP^x(T_B < \infty) = 1, \quad \forall x \in \cX,
\end{equation}
where $T_B \coloneq \inf\{t \geq 0: X_t \in B\}$ is the first hitting time of $B$. Then, $\mathbf{X}$ is Harris recurrent and a Harris recurrence measure $\mu$ is given by
\begin{equation}\label{eq: harris2}
\mu(B) = \E^{\nu} \Big[\int_0^\infty \mathrm{e}^{- t} \one_B(X_t) \diff{t} \Big] = U_1(\nu,B), \quad B \in \cB(\cX).
\end{equation}
Let us now recall the notion of \textit{petite} and \textit{small} sets, with the former concept being a generalization of the latter. We say that a non-empty set $C \in \cB(\cX)$ is petite, if there exists a sampling distribution $a$ on $((0,\infty), \cB(0,\infty))$ and a non-trivial measure $\nu_a$ on the state space such that for the sampled kernel
$$K_a(x, \diff{y}) \coloneq \int_{0+}^\infty P_t(x,\diff{y}) \, a(\diff{t}), \quad x,y \in \cX,$$
it holds that
$$K_a(x,\cdot) \geq \nu_a(\cdot), \quad x \in C.$$
The sampled kernel corresponds to the transition kernel of the discrete-time Markov chain obtained from $\mathbf{X}$ by sampling at renewal times of an independent renewal process with increment distribution $a$. An important special case is the $\lambda$-resolvent kernel
$$R_\lambda(x,\diff{y}) \coloneq \int_{0+}^\infty \lambda\mathrm{e}^{-\lambda t} P_t(x,\diff{y}) \diff{t} = \lambda U_\lambda(x,\diff{y}), \quad x,y \in \cX,$$
obtained for the sampling distribution $a = \mathrm{Exp}(\lambda)$, $\lambda > 0$. If $a = \delta_\Delta$ for some $\Delta > 0$, then $C$ is called a \textit{small} set and we refer to the sampled chain $\mathbf{X}^\Delta \coloneq (X_{n\Delta})_{n \in \N_0}$ as the $\Delta$-skeleton of $\mathbf{X}$. The importance of petite sets comes from the fact, that petite sets are small for the sampled chain and small sets in discrete time Markov chain theory allow to construct a related Markov chain possessing an atom via the technique of Nummelin splitting, which then makes renewal arguments that are well-known for Markov chains on countable state spaces transferrable to the general state space situation. We refer to Meyn and Tweedie \cite{MeynTweedie2009} for a comprehensive account. Let us also remark that petite sets are by no means rare. E.g.\ consider the case that $\mathbf{X}$ is a $T$-process, that is there exists a non-trivial continuous component $T$ for the sampled kernel $K_a$ for some sampling distribution $a$, meaning that
\begin{enumerate}[label=(\alph*),ref=(\alph*)]
  \item $x \mapsto T(x,B)$ is lower semicontinuous for all $B \in \mathcal{B}(\cX);$
  \item $K_a(x,B)  \geq T(x,B)$ for all $x \in \cX$ and $B \in \mathcal{B}(\cX).$
\end{enumerate}
Then every compact subset of $\cX$ is petite, provided that $\mathbf{X}$ is irreducible, see Theorem 5.1 in \cite{Tweedie1994}.

The final concept that we need is \textit{aperiodicity}. We say that $\mathbf{X}$ is aperiodic, if there exists a petite set $C \in \cB^+(\cX)$ (i.e.\, $C$ must be accessible) and some $T \geq 0$ s.t.\
$$\forall t \geq T, x \in C\colon \, P_t(x,C) > 0.$$
Alternatively, $\mathbf{X}$ is called aperiodic in \cite{MeynTweedie2009} if there exists some $\Delta > 0$ such that the $\Delta$-skeleton $\mathbf{X}^\Delta$ is irreducible, i.e.\ there exists a $\sigma$-finite measure $\mu$ on $(\cX,\cB(\cX))$ such that
$$\mu(B) > 0 \implies \forall x \in \cX\colon \, \sum_{n =1}^\infty \PP^x(X_{n\Delta} \in B) > 0.$$
It seems to be well-known in the literature that the existence of an irreducible skeleton chain for a Harris recurrent Markov process implies aperiodicity, but there is no concrete statement to be found. Proposition 6.1 in \cite{MeynTweedie1993}, which  \cite{douc2009} refers to, does not quite state that irreducibility of skeletons implies aperiodicity, but indeed provides the right tool to prove it. For completeness we give the short proof and make the additional simple observation that if the petite set $C$ in the definition of aperiodicity is a singleton set, then aperiodicity also implies the existence of an irreducible skeleton chain, which will be useful later on.

\begin{lemma} \label{lemma: aperiod}
Suppose that the $\psi$-irreducible Markov process $\mathbf{X}$ is positive Harris recurrent, Borel right and its state space is locally compact and separable. Then, if there exists some irreducible skeleton chain, $\mathbf{X}$ is aperiodic. Conversely, if $\mathbf{X}$ is aperiodic and the defining set $C$ is a singleton set, then any $\Delta$-skeleton is irreducible.
\end{lemma}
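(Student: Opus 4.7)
The plan is to treat the two directions separately; the forward implication requires a continuous--time interpolation that is genuinely delicate, while the converse is a short Chapman--Kolmogorov argument.

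For the forward direction, suppose $\mathbf{X}^{\Delta_0}$ is irreducible for some $\Delta_0 > 0$. Harris recurrence of $\mathbf{X}$ passes to $\mathbf{X}^{\Delta_0}$, so I would first invoke the Nummelin small--set construction for irreducible discrete--time Harris chains (see Meyn--Tweedie \cite{MeynTweedie2009}, Ch.~5) to extract a set $C$ together with a minorization $P_{n_0\Delta_0}(x,\cdot) \geq \varepsilon\, \nu(\cdot)$ valid for all $x \in C$, with $\nu(C) > 0$. Sampling with $a = \delta_{n_0 \Delta_0}$ makes $C$ petite for $\mathbf{X}$, and accessibility $C \in \cB^+(\cX)$ follows from the fact that the invariant distribution $\pi$ of the Harris recurrent process $\mathbf{X}$ is a maximal irreducibility measure and $\pi(C) > 0$. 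Applying the cyclic decomposition to $\mathbf{X}^{\Delta_0}$, its discrete period $d$ can be absorbed by passing to the $d$-subskeleton restricted to one cyclic class, where the skeleton becomes discretely aperiodic, giving $P^{k}_{n_0 \Delta_0}(x, C) > 0$ for every $x \in C$ and every $k \geq k_0$. The genuine difficulty, and the main obstacle of the proof, is to upgrade these positivity statements along the discrete grid $\{k n_0 \Delta_0\}$ to all $t \geq T$ for some $T$. The clean way to do this, which is the tool provided by Proposition 6.1 of \cite{MeynTweedie1993b}, is to iterate the minorization to obtain $P^{k}_{n_0\Delta_0}(x,\cdot) \geq \varepsilon_k \tilde{\nu}_k(\cdot)$ on $C$ and then fill the gaps via the Chapman--Kolmogorov identity
$$P_{s + k n_0 \Delta_0}(x, C) \;\geq\; \varepsilon_k \int P_s(y, C)\, \tilde{\nu}_k(\diff{y}), \qquad s \in [0, n_0 \Delta_0),$$
using that the iterates $\tilde{\nu}_k$ converge (in an appropriate sense) to $\pi$ by Harris ergodicity, so the right--hand side is uniformly bounded below for $k$ large. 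I would follow that argument, adapted to the Borel right setting by the fact that every compact subset of $\cX$ is petite for $T$-processes but here we only need the petite property we have already established for $C$.

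For the converse, suppose $\mathbf{X}$ is aperiodic with defining singleton $C = \{x_0\}$, so $P_t(x_0, \{x_0\}) > 0$ for all $t \geq T$ and $\{x_0\} \in \cB^+(\cX)$. The latter yields $U(y, \{x_0\}) = \int_0^\infty P_t(y,\{x_0\})\,\diff t > 0$ for every $y \in \cX$ by $\psi$-irreducibility, so there exists $t_0 = t_0(y) > 0$ with $P_{t_0}(y, \{x_0\}) > 0$. Now fix an arbitrary $\Delta > 0$ and choose $n \in \N$ with $n\Delta - t_0 \geq T$; Chapman--Kolmogorov then gives
$$P_{n\Delta}(y, \{x_0\}) \;\geq\; P_{t_0}(y,\{x_0\})\, P_{n\Delta - t_0}(x_0, \{x_0\}) \;>\; 0.$$
Consequently, for every Borel set $B$ with $x_0 \in B$ and every $y \in \cX$ we have $\sum_{n \geq 1} P^n_\Delta(y, B) \geq P_{n\Delta}(y,\{x_0\}) > 0$, which shows that $\delta_{x_0}$ is an irreducibility measure for the $\Delta$-skeleton. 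Since $\Delta > 0$ was arbitrary, every $\Delta$-skeleton is irreducible, completing the converse.
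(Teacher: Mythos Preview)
Your converse direction is correct and essentially identical to the paper's argument.

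For the forward direction, you and the paper both invoke Proposition~6.1 of Meyn--Tweedie, but you mischaracterize what it delivers and thereby make the argument much harder than necessary. The proposition (as the paper uses it) already gives the \emph{continuous-time} uniform minorization: for any petite set $C$ there exist a non-trivial measure $\mu$ and $T>0$ with $P_t(x,\cdot)\geq\mu(\cdot)$ for every $t\geq T$ and every $x\in C$. There is no discrete grid to interpolate between, no cyclic decomposition to absorb, and no appeal to convergence of iterated minorizing measures toward $\pi$. Once this is in hand, the paper simply chooses $C$ accessible (using that $\cX$ is covered by countably many petite sets), observes that $U(\mu,C)=\int_0^\infty \PP^\mu(X_s\in C)\,\diff s>0$ since $\psi(C)>0$, picks a single $s$ with $\PP^\mu(X_s\in C)>0$, and then Chapman--Kolmogorov gives $P_{t}(x,C)\geq \PP^\mu(X_s\in C)>0$ for all $t\geq T+s$ and $x\in C$.

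Your route---Nummelin splitting on the skeleton, cyclic decomposition, then filling gaps via $P_{s+kn_0\Delta_0}(x,C)\geq\varepsilon_k\int P_s(y,C)\,\tilde\nu_k(\diff y)$ and limiting behaviour of $\tilde\nu_k$---is in spirit a reconstruction of how Proposition~6.1 is proved, but your sketch of that reconstruction is loose: the claim that the iterates $\tilde\nu_k$ converge ``in an appropriate sense'' to $\pi$ is vague, and you would need this convergence uniformly in the gap parameter $s\in[0,n_0\Delta_0)$ to get a uniform lower bound. This can be made rigorous, but it is exactly the work that Proposition~6.1 packages for you; citing it and moving on, as the paper does, is the cleaner choice.
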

\begin{proof}
Suppose first that there exists some irreducible $\Delta$-skeleton. Then, the assumptions on the process allow to use Proposition 6.1 from \cite{MeynTweedie1992}, which states that for any petite set $C$ there exists some non-trivial measure $\mu$ and and a $T > 0$ such that for all $t \geq T$ we have
\begin{equation} \label{eq1 skeleton aperiodic}
\PP^x(X_t \in \cdot) \geq \mu(\cdot), \quad \forall t \geq T, x \in C,
\end{equation}
which implies in particular that $C$ is even a small set. By the Markov property it thus follows for $s \geq 0$ that
\begin{equation} \label{eq2 skeleton aperiodic}
\PP^x(X_{t +s} \in \cdot) = \int_\cX \PP^x(X_t \in \diff{y})\,\PP^y(X_s \in \cdot)\geq \int_\cX \mu(\diff{y})\,\PP^y(X_s \in \cdot) = \PP^{\mu}(X_s \in \cdot), \quad \forall t \geq T, x \in C.
\end{equation}
By Proposition 3.4 of Meyn and Tweedie \cite{MeynTweedie1993b} the state space $\cX$ can be covered by countably many petite sets ($=$ small sets in our case), hence we may assume that $\psi(C) > 0$, i.e.\ $C \in \cB^+(\cX).$ Note that $U(x,C) > 0$ for all $x \in \cX$ and non-triviality of $\mu$ then yield that $U(\mu,C) = \int_\cX U(x,C) \,\mu(\diff{x}) > 0$ and since with Fubini $U(\mu,C) = \int_0^\infty \PP^\mu(X_t \in C) \diff{t}$ it follows that there exists $s > 0$ such that $\PP^\mu(X_s \in C) > 0$. From \eqref{eq2 skeleton aperiodic} it thus follows that for such $s$ and all $t \geq T+s$ and $x \in C$ it holds that
$$\PP^x(X_{t} \in C) \geq \PP^\mu(X_s \in C) > 0,$$
which proves aperiodicity of $\mathbf{X}$.

Suppose now that $\mathbf{X}$ is aperiodic with defining small singleton set $C = \{c\} \in \cB^+(\cX)$ for some $c \in \cX$. Then, there exists $T> 0$ such that
$$\PP^{c}(X_t = c) > 0, \quad \forall t\geq T,$$
and $\delta_c$ is an irreducibility measure. Then, for given $x \in \cX$, there exist $t_x$ such that $\PP^x(X_{t_x} = c) > 0$ and the Markov property yields for any $t \geq T$
\begin{align*}
  \PP^x(X_{t_x+t} =c) &\geq \PP^x(X_{t_x+t} = c, X_{t_x} = c) = \PP^x(X_{t_x}=c)\PP^{c}(X_t =c) > 0.
\end{align*}
Hence, for given $\Delta > 0$, if we choose $n \in \N$ such that $n\Delta \geq t_x + T$, it follows that $\PP^x(X_{n\Delta} = c) > 0$ and thus $\mathbf{X}^\Delta$ is $\delta_c$-irreducible.
\end{proof}

We are now well-suited to discuss ergodicity of a Markov process. Let $\lVert \cdot \rVert_{\mathrm{TV}}$ denote the total variation norm on the space of signed finite measures $\mathcal{M}_b^s(\cX,\cB(\cX))$ on $(\cX,\cB(\cX))$, defined by
$$\lVert \nu\rVert_{\mathrm{TV}} \coloneq \sup_{\lvert g \rvert \leq 1} \lvert \nu(g) \rvert, \quad \nu \in \mathcal{M}_b^s(\cX,\cB(\cX)).$$
We say that $\mathbf{X}$ having a stationary distribution $\rho$ is \textit{ergodic} if
$$\forall x \in \cX\colon \quad \lim_{t \to \infty} \lVert \PP^x(X_t \in \cdot) - \rho\rVert_{\mathrm{TV}} = 0.$$
Clearly, ergodicity implies weak convergence of the marginal distributions of $\mathbf{X}$ to its invariant distribution. If $\mathbf{X}$ is positive Harris recurrent, Theorem 6.1 in Meyn and Tweedie \cite{MeynTweedie1993} provides us with a necessary and sufficient criterion for ergodicity in terms of skeletons of the process:
\begin{equation}\label{eq: ergodicity}
\mathbf X \text{ is ergodic} \iff \exists \Delta > 0 \text{ s.t. } \mathbf{X}^\Delta \text{ is irreducible}.
\end{equation}
Once we know that $\mathbf{X}$ is ergodic, a natural question is the rate of convergence of the marginals to the invariant distribution. To this end, \cite{DownMeynTweedie1995} investigate convergence in the so called $f$-norm. For  a strictly positive, measurable function $f \in \cB(\cX)$ satisfying $f \geq 1$, the $f$-norm on $\mathcal{M}_b^s(\cX,\cB(\cX))$ on $(\cX,\cB(\cX))$ is given by
$$\lVert \nu \rVert_f \coloneq \sup_{\lvert g \rvert \leq f} \lvert \nu(g) \rvert, \quad \nu \in \mathcal{M}(\cX,\cB(\cX)),$$
where the supremum is taken over all measurable functions $g$ bounded by $f$. Note that for $f \equiv 1$, the $f$-norm reduces to the total variation norm. We say that the Markov process $\mathbf{X}$ with stationary distribution $\rho$ is $f$-\textit{uniformly ergodic} if there exist constants $D,\kappa > 0$ such that
\begin{equation} \label{eq: uniform ergodicity}
\lVert P_t(x,\cdot) - \mu \rVert_f \leq Df(x) \mathrm{e}^{-\kappa t}, \quad x \in \cX,
\end{equation}
which in particular implies that the marginal distributions of $\mathbf{X}$ converge to the stationary distribution at an exponential rate in total variation. For the latter, we also refer to the process as being \textit{exponentially} or \textit{geometrically ergodic}.

\cite{DownMeynTweedie1995} give conditions in terms of drift criteria for the generator, semigroup and resolvent kernel for $f$-uniform ergodicity. For our treatment of overshoots based on their resolvent, we will choose the resolvent drift criterion for determining the convergence speed of overshoots. More precisely, if $\mathbf{X}$ is irreducible and aperiodic and for some $\lambda > 0$ there exist constants $b \in \R_+, \beta \in (0,1)$, a petite set $C$ and a measurable function $V \geq 1$ such that
\begin{equation} \label{eq: exp ergodic1}
R_\lambda V \leq \beta V + b\one_C,
\end{equation}
Theorem 5.2 in \cite{DownMeynTweedie1995} tells us that $\mathbf{X}$ is $R_\lambda V$-uniformly ergodic. If $V$ is \textit{unbounded off petite sets}, that is $\{x \in \cX: V(x) \leq z\}$ is petite for any $z > 0$, \eqref{eq: exp ergodic1} is equivalent to demanding that there exists $\beta_0 \in (0,1)$ such that
\begin{equation} \label{eq: exp ergodic2}
R_\lambda V\leq \beta_0 V + b.
\end{equation}
To see this, for $\gamma > 1$ define the petite set $C(\gamma) \coloneq \{x \in \cX: V(x) \leq \gamma b\slash(1-\beta_0)\}$, then
\begin{equation}\label{eq: erg equiv}
\begin{split}
\beta_0 V + b & \leq \beta_0 V + b \one_{C(\gamma)} + \frac{1}{\gamma}(1-\beta_0)V \one_{C(\gamma)^{\mathsf{c}}}\\
&\leq \frac{1}{\gamma}(1+ (\gamma - 1)\beta_0)V + b \one_{C(\gamma)},
\end{split}
\end{equation}
showing that for any choice of $\gamma > 1$, \eqref{eq: exp ergodic2} implies \eqref{eq: exp ergodic1} with $ C = C(\gamma)$ and $\beta = (1+ (\gamma - 1)\beta_0)\slash \gamma \in (0,1)$. The converse relation is obvious.

General drift to petite sets criteria for the speed of convergence to the invariant distribution were extended in \cite{douc2009} to the case of \textit{subgeometric} rates. The combined conclusions of Theorem 3.2 and Theorem 4.9 in \cite{douc2009} read that if $\mathbf{X}$ is ergodic and for some $\lambda > 0$ there exists 
\begin{itemize}
\item a closed, petite set $C$ and a constant $b < \infty$,
\item a function $\tilde{V}\colon \mathcal{X} \to [1,\infty)$, 
\item an increasing, differentiable and concave function $\phi\colon [1,\infty) \to (0,\infty)$,
\end{itemize}
such that
\begin{equation}\label{eq:drift subgeo}
R_\lambda \tilde V \leq \tilde V - \phi\circ \tilde V + b \one_C,
\end{equation}
then, provided $R_\lambda \tilde V$ is continuous, there exists some constant $c > 0$ such that 
\begin{equation}\label{eq:subgeo rate}
\lVert P_t(x,\cdot) - \mu \rVert_{\mathrm{TV}} \leq c\mathcal{R}_\lambda \tilde V(x) \Xi(t), \quad t \geq 0, x \in \mathcal{X},
\end{equation}
where $\Xi(t) = 1/(\lambda\phi \circ H_{\lambda\phi}^{-1})(t)$ for $H_{\lambda\phi}(t) = \int_1^t 1/(\lambda \phi(s)) \diff{s}.$ Note that \eqref{eq: exp ergodic1} can be recovered for linear $\phi$, in which case $\Xi(t) = \mathrm{e}^{-\kappa t}$ for some $\kappa > 0$, and hence exponential ergodicity can be regarded as a special case of this general result.

Studying exponential and subgeometric convergence is not only interesting in its own right, but does have direct implications on the mixing behavior of the Markov process, which we are ultimately going after in this article. For two $\sigma$-algebras $\cG$ and $\cH$ and a given probability measure $\mathbf{P}$, introduce the $\beta$-mixing coefficient
\begin{equation} \label{eq: beta mix coeff}
\beta_{\mathbf{P}}(\cG,\cH) \coloneq  \sup_{C \in \mathcal{G} \otimes \mathcal{H}} \big\lvert\mathbf{P}\vert_{\mathcal{G} \otimes \mathcal{H}}(C) - \mathbf{P}\vert_{\mathcal{G}} \otimes \mathbf{P}\vert_{\mathcal{H}}(C)\big \rvert,
\end{equation}
where $\mathbf{P}\vert_{\mathcal{G}\otimes \mathcal{H}}$ is the restriction to $(\Omega \times \Omega, \mathcal{G} \otimes \mathcal{H})$ of the image measure of $\mathbf{P}$ under the canonical injection $\iota(\omega) = (\omega,\omega).$ Noting that for $A \times B \in \mathcal{G} \otimes \mathcal{H}$, it holds that $\mathbf{P}\vert_{\mathcal{G}\otimes \mathcal{H}}(A\times B) = \mathbf{P}(A \cap B)$,
it is clear that the $\beta$-mixing coefficient should be interpreted as a measure of independence of the $\sigma$-algebras.
For the Markov process $\mathbf{X}$ with natural filtration $\F^0 = (\cF^0_t)_{t \geq 0}$ and a given initial distribution $\eta$ let us now define
\begin{equation} \label{eq: beta mix coeff2}
\beta(\eta, t) = \sup_{s \geq 0} \beta_{\PP^\eta}(\cF^0_s,\overbar{\cF}{}^0_{s+t}), \quad t > 0,
\end{equation}
where we denoted by $\overbar{\cF}{}^0_t = \sigma(X_s,s\geq t)$ the $\sigma$-algebra of the future after time $t$. We then say that $\mathbf{X}$ is $\beta$-mixing when started in $\eta$, if $\lim_{t \to \infty}\beta(\eta,t) = 0$ 
Hence, if $\mathbf{X}$ is $\beta$-mixing we can roughly state that there is an asymptotic independence between the past and the future of the Markov process. If there even exist constants $C,\kappa > 0$ such that $\beta(\eta,t) \leq C\mathrm{e}^{-\kappa t}$, we call $\mathbf{X}$ \textit{exponentially} $\beta$-mixing.

\cite[Lemma 1.4]{volkonskii1961} gives
$$\beta_{\PP^\eta}(\cF^0_s, \overbar{\cF}{}^0_{t+s}) = \E^\eta\Big[\sup_{B \in \overbar{\cF}^{0}_{t+s}}\lvert \PP^\eta(B \vert \cF^0_s) - \PP^\eta(B) \rvert \Big].$$
Proposition 1 in \cite{davydov1973} therefore demonstrates that
$$\beta(\eta,t) = \sup_{s \geq 0} \int_{\cX} \lVert \PP^x(X_t \in \cdot) - \PP^\eta(X_{t+s} \in \cdot)\rVert_{\mathrm{TV}} \, \PP^\eta(X_s \in \diff{x}), \quad t > 0.$$
Masuda \cite[Lemma 3.9]{masuda2007} uses this characterization to establish that if we have (sub)geometric decay as in \eqref{eq:subgeo rate} for $\mathbf{X}$ and moreover
\begin{equation}\label{eq: masuda criterion}
\varrho(\eta)\coloneq \sup_{t \geq 0} c \E^\eta[R_\lambda\tilde{V}(X_t)] < \infty,
\end{equation}
then $\mathbf{X}$ started in $\eta$ is $\beta$-mixing at rate $\Xi(t)$ with
$$\beta(\eta,t) \leq 2 \varrho(\eta) \Xi(t), \quad t > 0.$$

\section{Proof of the resolvent formula} \label{app: resolvent}
\begin{proof}[Proof of Theorem \ref{resolvent overshoot}]
Note first that by assumed irreducibility  of $J^+$, it follows as a consequence of the Perron--Frobenius theorem that $\bm{\Phi}^+(\lambda)$ is invertible for any $\lambda > 0$, see Corollary 2.4 in Stephenson \cite{stephenson2018} or Remark 2.2 in Ivanovs et al. \cite{ivanovs2010}, and hence the statement of the theorem makes formally sense. Fix $(x,i) \in \R_+ \times [n]$. Let $\tau_0 \coloneq \inf\{t \geq 0: \cO_t = 0\}$, which is clearly finite and a stopping time for $(\cO_t,\cJ_t)$ since the process is Feller by Proposition \ref{overshoot feller}. By the sawtooth structure of $(\cO,\cJ)$, see also Figure \ref{fig: overshoot plot} for an illustration, we have  $\tau_0 = x$ and $(\cO_t,\cJ_t) = (x- t,i)$ for $t \in [0,x]$, $\PP^{x,i}$-a.s.. Together with the strong Markov property of $(\cO,\cJ)$, we therefore obtain for $f \in \cB_b(\R_+ \times [n])$
$$\mathcal{U}_\lambda f(x,i) = Q_\lambda f(x,i) + \mathrm{e}^{-\lambda x} \mathcal{U}_\lambda f(0,i).$$
Hence, we only need to calculate $\mathcal{U}_\lambda f(0,i)$.

We start with the case that the L\'evy measures $\Pi^+_i$, $i \in [n]$ are finite and then proceed by an approximation argument to the general case. Our assumption of upward regularity of $(\xi,J)$ then forces $d^+_i > 0$ for all $i \in [n]$, that is the processes $H^+_i$ are compound Poisson processes with drift. Denote for $i \in [n]$ by $Y^{(i)}$ random variables independent of $(\xi,J)$ corresponding to the jumps of $H^{+,(i)}$, whose distribution is given by $\Pi_i^+(\diff{x})\slash \Pi_i^+(\R_+)$. Moreover, denote by $\sigma \coloneq \inf\{t \geq 0: J^+_t \neq J^+_0\}$ the first jump time of $J^+$ and by $\tau = \inf\{t \geq 0: \Delta H{}^{+,0,J_0^+}_t > 0\}$ the first jump time of the L\'evy process driving the ascending ladder height process before the first phase transition. Then, from Proposition \ref{char map levy} and indistinguishability of $(\cO^+,\cJ^+)$ and $(\cO,\cJ)$ we can infer that under $\PP^{0,i}$, it holds that $T \coloneq \inf\{t \geq 0 : \Delta(\cO_t, \cJ_t) \neq 0\} = H{}^{+,0,i}_{(\sigma \wedge \tau)- }  = d^+_i(\sigma \wedge \tau)$ almost surely (consult again Figure \ref{fig: overshoot plot} for an illustration).
\begin{equation}\label{split integral}
\mathcal{U}_\lambda f(0,i) = \E^{0,i}\Big[\int_0^T + \int_T^{T + \tau_0 \circ \theta_T} + \int_{T + \tau_0 \circ \theta_T}^\infty \mathrm{e}^{-\lambda t} f(\cO_t,\cJ_t) \diff{t}\Big] \eqcolon I_1 + I_2 + I_3,
\end{equation}
where $(\theta_t)_{t \geq 0}$ denotes the transition operator of $(\cO,\cJ)$. Since under $\PP^{0,i}$, $\tau \overset{\mathrm{d}}{=} \mathrm{Exp}(\Pi_i^+(\R_+))$ is independent of $\sigma \overset{\mathrm{d}}{=} \mathrm{Exp}(-q_{i,i}^+)$ by Proposition \ref{char map levy}, it follows that $T \overset{\mathrm{d}}{=}\mathrm{Exp}((\Pi_i^+(\R_+) -q^+_{i,i})\slash d_i^+)$  and hence
\begin{align*}
I_1 = \E^{0,i}\Big[\int_0^T \mathrm{e}^{-\lambda t} f(0,i) \diff{t}\Big] = f(0,i)\frac{1}{\lambda}\big(1-\E^{0,i}[\mathrm{e}^{-\lambda T}]\big) &= f(0,i)\frac{1}{\lambda}\Big(1- \frac{\Pi_i^+(\R_+) - q_{i,i}^+}{d_i^+\lambda + \Pi_i^+(\R_+) - q_{i,i}^+}\Big)\\
&= f(0,i)\frac{d_i^+}{d_i^+\lambda + \Pi_i^+(\R_+) - q_{i,i}^+}.
\end{align*}
For the second integral, we use that $\PP^{0,i}(J^+_\sigma = j) =  - q^+_{i,j}\slash q^+_{i,i}$, independence of $\sigma, J^+_\sigma$ and $Y^{(i)}$ in combination with Proposition \ref{char map levy} and  the strong Markov property to obtain
\begin{align*}
  I_2 &= \E^{0,i} \Big[\mathrm{e}^{-\lambda T} \E^{0,i}\Big[\int_0^{\tau_0} \mathrm{e}^{-\lambda t} f(\cO_t,\cJ_t) \diff{t} \circ \theta_T \Big\vert \cG_T\Big]\Big] \\
  &= \E^{0,i}\Big[\mathrm{e}^{-\lambda  T} \E^{\cO_T,\cJ_T}\Big[\int_0^{\tau_0} \mathrm{e}^{-\lambda t} f(\cO_t,\cJ_t) \diff{t}\Big]\Big]\\
  &= \E^{0,i}\big[\mathrm{e}^{-\lambda d_i^+ \tau} Q_\lambda(Y^{(i)},i)\,;\, \tau < \sigma\big] + \E^{0,i}\big[\mathrm{e}^{-\lambda d_i^+ \sigma} Q_\lambda\big(\Delta_{i,J^+_\sigma}^{+,1},J^+_\sigma\big)\,;\, \sigma < \tau\big]\\
  &= \E^{0,i}[\mathrm{e}^{-\lambda d_i^+ \tau}\,;\, \tau < \sigma]\, \E^{0,i}[Q_\lambda(Y^{(i)},i)] + \E^{0,i}\big[\mathrm{e}^{-\lambda d_i^+ \sigma}\,;\, \sigma < \tau] \,\E^{0,i}\big[Q_\lambda\big(\Delta_{i,J^+_\sigma}^{+,1},J^+_\sigma\big)\big]\\
  &= \frac{\Pi_i^+(\R_+)}{\lambda d_i^+ + \Pi_i^+(\R_+) - q_{i,i}^+} \int_0^\infty Q_\lambda f(y,i) \, \Pi_i^+(\diff{y}) \slash \Pi_i^+(\R_+) \\
  &\quad + \frac{-q_{i,i}^+}{\lambda d_i^+ + \Pi_i^+(\R_+) - q_{i,i}^+} \sum_{j \neq i} \frac{q_{i,j}^+}{-q_{i,i}^+}  \int_0^\infty Q_\lambda f(y,j) \, F^+_{i,j}(\diff{y)}\\
  &= \frac{1}{\lambda d_i^+ + \Pi_i^+(\R_+) - q_{i,i}^+}\Big(\int_0^\infty Q_\lambda f(y,i) \, \Pi_i^+(\diff{y}) + \sum_{j \neq i}  q_{i,j}^+\int_0^\infty Q_\lambda f(y,j) \, F^+_{i,j}(\diff{y)} \Big)
\end{align*}
With the same arguments as above we also obtain
\begin{align*}
  I_3 &= \E^{0,i}\Big[\mathrm{e}^{-\lambda T} \E^{0,i}\Big[\int_{\tau_0}^\infty \mathrm{e}^{-\lambda t} f(\cO_t,\cJ_t) \diff{t}\Big \vert \cG_T \Big] \Big]\\
  &= \E^{0,i}\Big[\mathrm{e}^{-\lambda T} \E^{\cO_T,\cJ_T}\Big[\int_{\tau_0}^\infty \mathrm{e}^{-\lambda t} f(\cO_t,\cJ_t) \diff{t}\Big]\Big]\\
  &= \E^{0,i}[\mathrm{e}^{-\lambda d^+_i\tau}\,;\, \tau < \sigma]\, \E^{0,i}\Big[\E^{y,i}\Big[\mathrm{e}^{-\lambda \tau_0}\E^{y,i}\Big[\int_0^\infty \mathrm{e}^{-\lambda t} f(\cO_t,\cJ_t) \diff{t} \circ \theta_{\tau_0} \Big \vert \cG_{\tau_0}\Big]\Big]\Big\vert_{y = Y^{(i)}}\Big]\\
  &\quad+ \E^{0,i}[\mathrm{e}^{-\lambda d^+_i\sigma}\,;\, \sigma < \tau] \sum_{j \neq i}\frac{q^+_{i,j}}{-q_{i,i}^+}\E^{0,i}\Big[\E^{y,j}\Big[\mathrm{e}^{-\lambda \tau_0}\E^{y,j}\Big[\int_0^\infty \mathrm{e}^{-\lambda t} f(\cO_t,\cJ_t) \diff{t} \circ \theta_{\tau_0} \Big \vert \cG_{\tau_0}\Big]\Big]\Big\vert_{y = \Delta_{i,j}^{+,1}}\Big]\\
  &= \frac{\Pi_i^+(\R_+)}{\lambda d_i^+ + \Pi_i^+(\R_+) - q_{i,i}^+} \mathcal{U}_\lambda f(0,i)\E^{0,i}\big[\mathrm{e}^{-\lambda Y^{(i)}}\big]\\
  &\quad+ \frac{1}{\lambda d^+_i + \Pi_i^+(\R_+) - q_{i,i}^+} \sum_{j \neq i}q_{i,j}^+ \mathcal{U}_\lambda f(0,j)\E^{0,i}\big[\mathrm{e}^{-\lambda \Delta_{i,j}^{+,1}}\big]\\
  &= \frac{1}{\lambda d_i^+ + \Pi_i^+(\R_+) - q_{i,i}^+}\Big(\mathcal{U}_\lambda f(0,i) \int_0^\infty \mathrm{e}^{-\lambda y} \, \Pi_i^+(\diff{y}) + \sum_{j \neq i} q^+_{i,j} \mathcal{U}_\lambda f(0,j) \int_0^\infty \mathrm{e}^{-\lambda y} \, F^+_{i,j}(\diff{y})  \Big).
\end{align*}
Plugging into \eqref{split integral}, using $G^+_{ij}(\lambda) = \int_0^\infty \exp(-\lambda y)\, F_{ij}^+(\diff{y})$ and rearranging now yields
\begin{align*}
  &\mathcal{U}_\lambda f(0,i)\Big(d_i^+ \lambda + \int_0^\infty (1 - \mathrm{e}^{-\lambda y}) \, \Pi_i^+(\diff{y}) - q_{i,i}^+\Big) - \sum_{j \neq i} q_{i,j}^+ G^+_{ij}(\lambda) \mathcal{U}_\lambda f(0,j)\\
  &\qquad= d_i^+ f(0,i) + \int_0^\infty Q_\lambda f(y,i) \, \Pi_i^+(\diff{y}) + \sum_{j \neq i} q_{i,j}^+ \int_0^\infty Q_\lambda f(y,i) \, F^+_{i,j}(\diff{y}).
\end{align*}
By \eqref{char exp map} the left hand side is equal to
$$\mathcal{U}_\lambda f(0,i) (\Phi_i^+(\lambda) -q_{i,i}^+) - \sum_{j \neq i} q_{i,j}^+ G^+_{ij}(\lambda) \mathcal{U}_\lambda f(0,j) = \big(\bm{\Phi}^+(\lambda) \cdot(\mathcal{U}_\lambda f(0,j))_{j=1,\ldots,n}^\top\big)_i$$
and hence we conclude that
\begin{equation} \label{resolvent compound poisson}
(\mathcal{U}_\lambda f(0,i))_{i=1,\ldots,n}^\top = \bm{\Phi}^+(\lambda)^{-1} \cdot \Big(d^+_i f(0,i) + \int_0^\infty Q_\lambda f(x,i) \,\Pi_i^+(\diff{x}) + \sum_{j \neq i} q^+_{i,j}\E[Q_\lambda f(\Delta^+_{i,j},j)]\Big)^\top_{i=1,\ldots,n},
\end{equation}
which proves the assertion in case that $(H^+, J^+)$ is a compound Poisson Markov additive subordinator. For the general case, suppose that $(\xi,J)$ is an upward regular MAP and let for $\varepsilon > 0$, $(^\varepsilon \! H^{+},J^+)$ be the MAP subordinator corresponding to the ordinator constructed from the L\'evy subordinators $^\varepsilon\! H^{+,(i)}$ defined by
$$^\varepsilon\! H^{+,(i)}_t \coloneq (d^+_i + \varepsilon)t + \sum_{s \leq t} \Delta H^{+,(i)}_s \one_{(\varepsilon,\infty)}(\Delta H^{+,(i)}_s), \quad t \geq 0,$$
i.e.\ $^\varepsilon\! H^{+,(i)}$ is obtained from $H^+$ by deleting jumps smaller than $\varepsilon$ and adding an additional drift $\varepsilon$. This ensures that $^\varepsilon\! H^{+,(i)}$ is a compound Poisson subordinator with drift $d_i^+ + \varepsilon$ and L\'evy measure $\Pi_i^{+,\varepsilon} = \Pi_i^+(\cdot \cap (\varepsilon,\infty))$ and hence we may apply \eqref{resolvent compound poisson} for the $\lambda$-resolvent of the overshoot process
$$(^\varepsilon\! \cO^+_t, {^\varepsilon\! \cJ_t^+})_{t \geq 0} \coloneq (^\varepsilon\! H^+_{T^{+,\varepsilon}_t}-t, J^+_{T^{+,\varepsilon}_t})_{t \geq 0},$$
where $T^{+,\varepsilon}_t \coloneq \inf\{s \geq 0 : {^\varepsilon\! H^+_s} > t\},$ $t \geq 0.$ We first observe that for any $t > 0$ we obtain from Proposition \ref{char map levy}
$$\sup_{s\leq t} \lvert {^\varepsilon\! H^{+}_s} - H_s^+ \rvert \leq \varepsilon t + \sum_{s \leq t} \Delta H_s^+ \one_{\{\Delta H^+_s < \varepsilon\}},$$
and since $\sum_{s \leq t} \Delta H_s^+$ converges we obtain by dominated convergence that almost surely
$$\sup_{s\leq t} \lvert {^\varepsilon\! H^{+}_s} - H_s^+ \rvert \to  0, \quad \text{as } \varepsilon \downarrow 0$$
i.e.\ $^\varepsilon \! H^+$ converges to $H^+$ uniformly on compact sets almost surely as $\varepsilon \downarrow 0$. Let $\Xi$ be the set of $\PP$-measure $1$ on which $^\varepsilon\! H^+$ and $(H^+,J^+)$ have càdlàg paths and on which the above convergence holds. Let $\omega \in \Xi$. Then $^\varepsilon\! H^+_\cdot(\omega), H^+_\cdot(\omega) \in \mathcal{D}(\R_+)$, the space of càdlàg functions mapping from $\R_+$ to $\R_+$, which we endow with Skorokhods $J_1$-topology. Since $^\varepsilon\! H^+_\cdot(\omega)$ converges uniformly on compact time sets to $H^+_\cdot(\omega)$, Proposition VI.1.17 in \cite{jacod2003} tells us that $^\varepsilon\! H^+_\cdot(\omega)$ also converges with respect to the metric inducing the Skorokhod topology to $H^+_\cdot(\omega)$. For $t \geq 0$ let
$$ S_t\colon \mathcal{D}(\R_+) \to [0,\infty], \quad \alpha \mapsto \inf\{s \geq 0: \lvert \alpha(s) \rvert \geq t \text{ or } \lvert \alpha(s-)\rvert \geq t\}.$$
Since $^\varepsilon \! H^+_\cdot(\omega)$ and $H^+_\cdot(\omega)$ are strictly increasing it follows that $T^{+,\varepsilon}_t(\omega) = S_t(^\varepsilon \! H^+_\cdot(\omega))$ and $T^+_t(\omega) = S_t(H^+_\cdot(\omega))$. Moreover, the set $\{ t > 0: S_t(H^+_\cdot (\omega)) \neq S_{t+}(H^+_\cdot (\omega))\}$ is empty by strictly increasing paths of $H^+_\cdot(\omega)$. Hence, we obtain from Proposition 2.11 and the proof of part c) of Proposition VI.2.12 in \cite{jacod2003} that
\begin{equation} \label{conv hitting}
T_t^{+,\varepsilon}(\omega) = S_t(^\varepsilon \! H^+_\cdot(\omega)) \to S_t(H^+_\cdot(\omega)) = T_t^+(\omega), \quad \text{ as } \varepsilon \downarrow 0,
\end{equation}
and that for $t \notin \Lambda(\omega) = \{t > 0: \Delta H_{T_{t}^+}^+(\omega) > 0 \text{ and } H_{T_{t}^+ -}^+(\omega) = t\}$ we have
\begin{equation}\label{conv overshoot}
^\varepsilon\! H^+_{T^{+,\varepsilon}_t}(\omega) \to H^+_{T^+_t}(\omega), \quad \text{ as } \varepsilon \downarrow 0.
\end{equation}
But from the sawtooth structure of the paths of $\cO$ it is easy to see that $\Lambda(\omega) = \{t > 0: \Delta \cO^+_t(\omega) > 0\}$, which is countable (alternatively, see Lemma VI.2.10.(d) in \cite{jacod2003} for the same conclusion), hence non-convergence of $^\varepsilon \!\cO_t^{+}(\omega)$ to $\cO^+_t(\omega)$ only takes place on a set of Lebesgue measure $0$. Furthermore, from \eqref{conv hitting} it follows that $^\varepsilon \! \cJ^+_t(\omega)$ converges to $\cJ^+_t(\omega)$ as $\varepsilon \downarrow 0$ except possibly on the set
\begin{align*}
\Lambda^\prime(\omega) &\coloneq \{t > 0: J^+_{T^+_t}(\omega) \neq J^+_{T^+_t -}(\omega)\}\\
 &= \{t > 0: \Delta J^+_{T^+_t}(\omega) \neq 0, \Delta H^+_{T^+_t}(\omega) > 0 \} \cup \{t > 0: \Delta J^+_{T^+_t}(\omega) \neq 0, \Delta H^+_{T^+_t}(\omega) = 0 \}\\
&\eqcolon \Lambda^\prime_1(\omega) \cup \Lambda^\prime_2(\omega).
\end{align*}
For $t \in \Lambda^\prime_1(\omega)$ we have that in case $H^+_{T^+_{t}-}(\omega) < t  \leq H^+_{T^+_t}(\omega)$ it holds that $T^+_s(\omega) = T^+_t(\omega)$ for $s \in [H_{T^+_t -}(\omega), t]$. Right-continuity of $s \mapsto T^+_s(\omega)$ and $s \mapsto J^+_s(\omega)$ therefore imply that
for such $t$ we also have $^\varepsilon \! \cJ^+_t(\omega) \to \mathcal{J}_t^+(\omega)$ as $\varepsilon \downarrow 0$. Further, since $t \mapsto H_t^+(\omega)$ is continuous in $T^+_t(\omega)$ if $\Delta H^+_{T^+_t}(\omega) = 0$, it follows from strictly increasing paths that for $s,t \in \Lambda^\prime_2(\omega)$ we have $T_s^+(\omega) \neq T^+_t(\omega)$. Hence, $t \mapsto T^+_t(\omega)$ is injective on $\Lambda^\prime_2(\omega)$. Since
$$T^+_\cdot(\omega)(\Lambda^\prime_2(\omega)) = \{t > 0: \Delta J^+_t(\omega) \neq 0, \Delta H^+_t(\omega) = 0\} \subset \{t > 0: \Delta J^+_t(\omega) \neq 0\},$$
and the set on the right-hand side is countable thanks to $J^+_\cdot(\omega)$ being càdlàg, it follows that $\Lambda^\prime_2(\omega)$ is countable as well. The above discussion therefore yields that the set of times $t > 0$ for which $J^+_{T^{+,\varepsilon}_t}(\omega)$ does not converge to $\mathcal{J}^+_t(\omega)$ is given by
$$\Lambda^{\prime \prime}(\omega) \coloneq \{t > 0: \Delta J^+_{T^+_t}(\omega) \neq 0, H^+_{T^+_t-}(\omega) = t < H^+_{T^+_t}(\omega)\} \cup \Lambda^\prime_2(\omega) \subset \{t > 0: \Delta \cO^+_t(\omega) > 0\} \cup \Lambda^\prime_2(\omega)$$
is countable and therefore has Lebesgue measure $0$ as well. It follows that for any $\omega \in \Xi$ we have for $f \in \mathcal{C}_b(\R_+ \times [n])$ by dominated convergence
\begin{align*}
\lim_{\varepsilon \downarrow 0} \int_0^\infty f(^\varepsilon \! \cO^+_t(\omega), {^\varepsilon \! \cJ^+_t}(\omega)) \diff{t} &= \int_{(\Lambda(\omega) \cup \Lambda^{\prime \prime}(\omega))^\mathrm{c}} \lim_{\varepsilon \downarrow 0} f(^\varepsilon \! \cO^+_t(\omega), ^\varepsilon \! \cJ^+_t(\omega)) \diff{t}\\
&= \int_{(\Lambda(\omega) \cup \Lambda^{\prime \prime}(\omega))^\mathrm{c}}  f(\cO^+_t(\omega), \cJ^+_t(\omega)) \diff{t}\\
&= \int_0^\infty f(\cO^+_t(\omega), \cJ^+_t(\omega))\diff{t}.
\end{align*}
Consequently, if we denote by $U^\varepsilon_\lambda$ the $\lambda$-resolvent for $(^\varepsilon \! \cO^+,^\varepsilon \! \cJ^+)$, the set $\Xi$ having $\PP$-measure $1$ implies that for any $f \in \mathcal{C}_b(\R_+ \times [n])$
\begin{align*}
&(\mathcal{U}_\lambda f(0,i))_{i =1,\ldots,n}\\
&\quad= \Big(\int_\Xi \lim_{\varepsilon \downarrow 0} \int_0^\infty f(^\varepsilon \! \cO^+_t(\omega), {^\varepsilon \! \cJ^+_t}(\omega)) \diff{t} \, \PP^{0,i}(\diff{\omega})\Big)_{i =1,\ldots,n} \\
&\quad= \lim_{\varepsilon \downarrow 0 }\Big(\int_\Xi \int_0^\infty f(^\varepsilon \! \cO^+_t(\omega), {^\varepsilon \! \cJ^+_t}(\omega)) \diff{t}\, \PP^{0,i}(\diff{\omega}) \Big)_{i =1,\ldots,n}\\
&\quad= \lim_{\varepsilon \downarrow 0} (U^\varepsilon_\lambda f(0,i))_{i =1,\ldots,n}\\
&\quad= \lim_{\varepsilon \downarrow 0} {^\varepsilon \! \bm{\Phi}^+(\lambda)^{-1}} \cdot \Big((d^+_i + \varepsilon) f(0,i) + \int_\varepsilon^\infty Q_\lambda f(x,i) \,\Pi_i^+(\diff{x}) + \sum_{j \neq i} q^+_{i,j}\E[Q_\lambda f(\Delta^+_{i,j},j)]\Big)^\top_{i=1,\ldots,n}\\
&\quad= \bm{\Phi}^+(\lambda)^{-1} \cdot \Big(d^+_i f(0,i) + \int_0^\infty Q_\lambda f(x,i) \,\Pi_i^+(\diff{x}) + \sum_{j \neq i} q^+_{i,j}\E[Q_\lambda f(\Delta^+_{i,j},j)]\Big)^\top_{i=1,\ldots,n}\\
&\quad \eqcolon \bm{\Upsilon}(\lambda),
\end{align*}
where we used dominated convergence for the second and \eqref{resolvent compound poisson} for the fourth equality. It remains to extend this result to any $f \in \cB_+(\R_+ \times [n]) \cup \cB_b(\R_+ \times [n])$. To this end, let
$$\mathcal{M} \coloneq \big\{f \in \cB_b(\R_+ \times [n]): (\mathcal{U}_\lambda f(0,i))_{i =1,\ldots,n} = \bm{\Upsilon}(\lambda)\big\}.$$
Clearly, $\mathcal{M}$ is a vector space over $\R_+$ by linearity of the Lebesgue integral and since $\cC_b(\R_+ \times [n]) \subset \mathcal{M}$, the constant function $\one_{\R_+ \times [n]}$ is contained in $\mathcal{M}$. Moreover, dominated convergence shows that $\mathcal{M}$ is closed under convergence of an increasing family of functions $f_n$ converging to some $f \in \cB_b(\R_+ \times [n])$. Hence, $\mathcal{M}$ is a monotone vector space and since $\cC_b(\R_+ \times [n])$ is closed under multiplication and contained in $\mathcal{M}$, the functional Monotone Class Theorem A.0.6 from \cite{sharpe1988} implies that $\sigma(\mathcal{C}_b(\R_+ \times [n])) \subset \mathcal{M}$. But since $\R_+ \times [n]$ is a locally compact metric space with countable base, $\mathcal{C}_b(\R_+ \times [n])$ generates $\mathcal{B}_b(\R_+ \times [n])$ and hence $\mathcal{M} = \cB_b(\R_+ \times [n])$ follows.
For general $f \in \cB_+(\R_+ \times [n])$ let $f_n \coloneq f \one_{\{f \in [0,n]\}} \in \cB_b(\R_+ \times [n])$ and apply monotone convergence to deduce that \eqref{resolvent compound poisson} also holds for $f \in \cB_+(\R_+ \times [n]).$ This finishes the proof.
\end{proof}
\end{appendix}

\printbibliography

\end{document}